\newcommand{\leqnomode}{\tagsleft@true}
\newcommand{\reqnomode}{\tagsleft@false}
\newtheorem{theorem}{Theorem}[section]
\theoremstyle{plain}
\newtheorem{lemma}[theorem]{Lemma}
\newtheorem{proposition}[theorem]{Proposition}
\newtheorem{corollary}[theorem]{Corollary}
\newtheorem{remark}[theorem]{Remark}
\newtheorem{definition}[theorem]{Definition}
\numberwithin{equation}{section}
\DeclareMathOperator*{\esssup}{ess\,sup}
\DeclareMathOperator*{\essinf}{ess\,inf}
\newcommand\trnorm[1]{\left\|\kern-1.2pt\left|#1\right\|\kern-1.2pt\right|}
\newcommand{\N}{\mathbb{N}}
\newcommand{\C}{\mathbb{C}}
\newcommand{\R}{\mathbb{R}}
\DeclareMathOperator*{\supp}{supp}
\DeclareMathOperator*{\esssupp}{ess\,supp}
\let\latexchi\chi
\renewcommand\chi{\@ifnextchar_\sub@chi\latexchi}
\newcommand{\sub@chi}[2]{
  \@ifnextchar^{\subsup@chi{#2}}{\latexchi^{}_{#2}}%
}
\newcommand{\subsup@chi}[3]{
  \latexchi_{#1}^{#3}%
}
\begin{document}

\title[Geometric properties of Sobolev spaces $W^{1,\Phi}$]{Uniform convexity, reflexivity, supereflexivity and $B$ convexity  of generalized Sobolev spaces $W^{1,\Phi}$}

\keywords{Musielak-Orlicz spaces, Sobolev spaces, Musielak-Orlicz- Sobolev spaces, variable exponent Sobolev spaces, Orlicz-Sobolev spaces,  isomorphic subspaces to $\ell^\infty$ or $\ell^1$, uniform convexity of generalized Sobolev spaces, refelxivity, superreflexivity, $B$-convexity}

\subjclass[2010]{46B20, 46E30, 47B38}

\author{Anna Kami\'{n}ska}
\address{Department of Mathematical Sciences,
The University of Memphis, TN 38152-3240}
\email{kaminska@memphis.edu}

\author{Mariusz \.Zyluk}
\email{mzyluk@gmail.com}

\date{\today}

\thanks{}

\begin{abstract} We investigate Sobolev spaces $W^{1,\Phi}$ associated to Musielak-Orlicz  spaces $L^\Phi$.  We first present conditions  for the boundedness of the Voltera operator in $L^\Phi$.   Employing this,   we provide necessary and sufficient conditions for   $W^{1,\Phi}$  to contain isomorphic subspaces to $\ell^\infty$ or  $\ell^1$. Further we give necessary and sufficient conditions in terms of the function $\Phi$ or its complementary function $\Phi^*$ for reflexivity,    uniform convexity, $B$-convexity and superreflexivity of  $W^{1,\Phi}$. As corollaries we obtain the corresponding results for Orlicz-Sobolev  spaces $W^{1,\varphi}$ where $\varphi$ is an Orlicz function, the variable exponent Sobolev spaces $W^{1,p(\cdot)}$  and the Sobolev spaces associated to double phase functionals.
\end{abstract}

\maketitle

\section{Introduction}

  The main goal of this paper is to study  geometric properties of Sobolev spaces $W^{1,\Phi}$ induced by  Musielak-Orlicz spaces $L^\Phi$, where $\Phi$ is an Orlicz function with parameter, called also  Musielak-Orlicz function.

Musielak-Orlicz spaces appeared first time in the literature in 1951 in H. Nakano paper \cite{nakano}, and later  J. Musielak and W. Orlicz in 1959 in the paper  \cite{musielak orlicz} gave a more general definition often more suitable for applications.  Musielak-Orlicz spaces engendered some interest and were extensively studied during the seventies, eighties and nineties of the last century by various groups of mathematicians across the world. In particular the structural and geometrical properties of those spaces were well understood.

 On the other hand, the Musielak-Orlicz-Sobolev spaces ($MOS$ spaces) came to the light late in seventies. The first results about $MOS$ spaces were established by H. Hudzik in series of paper between 1976-1979 (see e.g. \cite{hudzik 1979}). We remark here that the author needed to assume some rather strong assumptions about the function $\Phi$ to establish his results, which could be  common in the field of $MOS$ spaces.

 Parallel to the research on $MO$ and $MOS$ spaces the variable exponent spaces, also called the Nakano spaces, were of significant interest. In terms of MO spaces a variable exponent space $L^{p(\cdot)}$ is a space generated by a function $\Phi$ of the form ${t^{p(x)}}/{p(x)}$, where $p(x)$ is a measurable function such that $p(x)\geq 1$. Surprisingly, a serious investigation into Sobolev spaces based on $L^{p(\cdot)}$ began only in the nineties of the last century with the paper of O. Kov\'acik and J. R\'akosník \cite{pepiczki}, where the authors proved some basic properties of variable exponent Sobolev spaces.

After the initial interest in the $MOS$ spaces, the research in the area  remained dormant for almost a decade. Surprisingly, the research was reinvigorated by the interest of those spaces for their application in physics. 
One of the  applications of $MOS$ came from modeling electrorheological fluids 
 - fluids whose viscosity changes in the presence of an electrical field. In 2000 M. Ruzicka \cite{ruzicka} provided a model for mechanics of those fluids that employs the variable exponent Sobolev spaces. In 2002 L. Diening in his dissertation \cite{diening dis} expanded  the theory of Ruzicka and provided, among other things, the sufficient condition on  the regularity of the exponent $p(x)$ to guarantee the boundedness of Hardy-Litllewood maximal operator.
Those results rekindled the interest in $MOS$ spaces and motivated other authors to study partial differential equations in the context of $MOS$ spaces \cite{CGSW}.

The paper consists of  six sections. In the introductory part we define several notions related to Banach function spaces, Musielak-Orlicz spaces $L^\Phi$, their norms and useful inequalities of Musielak-Orlicz functions ($MO$ functions) $\Phi$. We also consider variable exponent  and double phase $MO$ functions showing  necessary and sufficient conditions for those functions and their conjugates to satisfy the growth condition $\Delta_2$. At the end we define Musielak-Orlicz-Sobolew spaces ($MOS$ spaces) $W^{1,\Phi}$ on a finite interval $(\alpha, \beta)$.

In the second section we study integral operators between Musielak-Orlicz spaces. In particular we obtain a characterization of the bounded Voltera operators on  $L^\Phi$ under the assumption of so called (V) condition. It appears that (V) condition is always satisfied in Orlicz spaces, variable exponent Lebesgue spaces  as well as in the spaces induced by double phase functionals. 

The third section is devoted to characterization of the Sobolev spaces $W^{1,\Phi}$ containing an isomorphic subspace of $\ell^\infty$.  Before that we complete the analogous results in $MO$ spaces and in particular in variable exponent Lebesgue spaces.  It appears that the lack of the growth condition $\Delta_2$ of $\Phi$ is a sufficient condition for $W^{1,\Phi}$ to contain of $\ell^\infty$ and is also necessary whenever condition (V) is satisfied. 

In the fourth section we do analogous investigations concerning the existence of a subspace isomorphic to $\ell^1$ in $W^{1,\Phi}$. We obtain that if either $\Phi$ or $\Phi^*$ does not satisfy $\Delta_2$ then $W^{1,\Phi}$ contains such a subspace. The necessity of this occurs under condition (V). It follows  complete characterizations of the containment of $\ell^1$ in Orlicz-Sobolev spaces $W^{1,\varphi}$,  variable exponent Lebesgue spaces $L^{p(\cdot)}$ or variable exponent Sobolev spaces $W^{1,p(\cdot)}$. 

The short fifth section states necessary and sufficient conditions on reflexivity of $W^{1,\Phi}$ followed by corresponding corollaries in $W^{1,p(\cdot)}$ and $W^{1,\varphi}$.

Section sixth is the main part of this paper. Here we characterize uniform convexity of $W^{1,\Phi}$. It is expressed in terms of $\Phi$ and its conjugate $\Phi^*$. Since $W^{1,\Phi}$ is an isometric subspace of the  product $L^\Phi\times L^\Phi$, the conditions for  uniform convexity for $L^\Phi$ are sufficient for uniform convexity of $W^{1,\Phi}$. The key result here is to show that those conditions, $\Delta_2$ of $\Phi$ and uniform convexity of $\Phi$, are also necessary. In this part we also give a complete characterization of uniform convexity of $L^{p(\cdot)}$ as well as $W^{1,p(\cdot)}$ and $W^{1,\varphi}$.

In the last section seven, on the basis of  the previous results we  show that under condition (V), reflexivity, superreflexivity and $B$-convexity in $W^{1,\Phi}$ are equivalent, and they hold whenever $\Phi$ and $\Phi^*$ satisfy $\Delta_2$.   We obtain analogous results in Orlicz-Sobolev spaces $W^{1,\varphi}$ and variable exponent Sobolev spaces $W^{1,p(\cdot)}$. 

All results contained in this paper are proved for $MO$ space $L^\Phi$ or $MOS$ space $W^{1,\Phi}$,  where   $\Phi$ is a general $MO$ function occasionally with mild assumption.  In the particular case where $\Phi$ is a variable exponent function, most results  hold true without any additional assumptions, and have not been known before.

  Let further $\R$ be the set of real numbers, $\N$ the set of natural numbers and  $ \R_+=[0,\infty) $.    Let $(\Omega, \Sigma, \mu)$ be a measurable  space, where $\Sigma$ is a $\sigma$-algebra of subsets of $\Omega$ and $\mu$ is a $\sigma$-finite, complete measure on $\Sigma$.  By $L^0 = L^0(\Omega)$ denote the set of all $\mu$-measurable complex valued functions on $\Omega$. Recall that $(X, \|\cdot\|_X)$ is a {\it Banach function space} if $X\subset L^0$, and if $f\in L^0$, $g\in X$ and $|f| \le |g|$ a.e. then $f\in X$ and $\|f\|_X \le \|g\|_X$. We say that  a Banach function space  $(X, \|\cdot\|_X)$ has the  {\it Fatou property},  if for any $0\le f_n \uparrow f$ a.e., $f_n\in X$, $f\in L^0$ and $\sup_n\|f_n\|_X<\infty$ then $f\in X$ and $\|f_n\|_X \uparrow \|f\|_X$ as $n\to\infty$. An element $f\in X$ is called {\it order continuous} whenever for any $0\le f_n \le f$ with $f_n\downarrow 0$ a.e., we have $\|f_n\|_X \downarrow 0$. Letting $X_a$ be the set of all order continuous  elements from $X$, and $X_b$ be the closure of all simple functions from $X$, we have $X_a\subset X_b$. Let $X^*$ be the dual space to $X$. The {\it K\"othe dual space}  $X'$ of $X$  \cite{BS, Z} is defined as follows
\[
X' = \left\{ f\in L^0:  \|g\|_{X'} = \sup\left\{\int_\Omega fg\,d\mu\, : \ \|f\|_X \le 1\right\} < \infty \right\} .
\]
The space $X'$  equipped with the norm $\|\cdot\|_{X'}$ is  a Banach function space satisfying the Fatou property. If $X_a = X_b$ and $X$ has the Fatou property then $(X_a)^*$ is isometrically isomorphic to $X'$. In this case $X^* \simeq X' \oplus X_s^*$, where the symbol $\simeq$ denotes linear isometry, and $X_s^*= X_a^\perp$ is the set of all singular functionals that is the set of $S\in X^*$ such that $S(f) = 0$ for every $f\in X_a$. For references on function spaces see \cite{BS, Kant, LT2, Lux, Z}.

  A function $\varphi : [0,\infty) \to [0,\infty]$ is called an {\it Orlicz function with extended values}, if $\varphi$ is not identically 0, $\lim_{t\to 0+} \varphi (t) =
\varphi (0) = 0$, and $\varphi$ is left continuous and convex on $(0, b_\varphi]$, where $b_\varphi = \sup\{t > 0 : \varphi(t) < \infty\}$. If for every $t\ge 0$, $\varphi(t) < \infty$, then $\varphi$ is called an {\it Orlicz function} \cite{KR, Lux}. 

 A function $\Phi: \Omega\times  [0,\infty) \to [0,\infty]$ is called a {\it Musielak-Orlicz function with extended values} ($eMO$ function for short) if for a.a. $x\in \Omega$, $\Phi(x, \cdot)$ is an
Orlicz function with extended values and for all $t \ge 0$, $\Phi(\cdot, t)$ is measurable.   
  If in addition  $\Phi(x, t) <\infty$ for a.a. $x\in\Omega$, $t\ge 0$, then it is called a {\it Musielak-Orlicz} function ($MO$ function for short).
  Given a Musielak-Orlicz  function $\Phi$ with extended values, the {\it Musielak-Orlicz space} ({\it $MO$ space}), called also {\it generalized Orlicz space}  $L^\Phi = L^\Phi(\Omega)$, consists of all functions $f\in L^0$ such that 
\[
I_\Phi(\lambda f) = \int_\Omega\Phi(x,\lambda |f(x|)\,d\mu(x) < \infty 
\]
for some $\lambda>0$.
 The {\it Luxemburg norm}
\[
\|f\|_\Phi = \inf\{ \lambda > 0: I_\Phi(f/\lambda) \le 1\},
\]
and the {\it Orlicz norm}
\[
\|f\|_\Phi^0 = \sup_{I_{\Phi^*}(g) \le 1} \int_\Omega f(x) g(x) d\mu(x) =\sup_{I_{\Phi^*}(g) \le 1} \int_\Omega f g \,d\mu
\]
are two equivalent standard norms considered in $L^\Phi$. In fact,  $\|f\|_\Phi \le \|f\|_\Phi^0 \le 2 \|f\|_\Phi$ for any $f\in L^\Phi$.  In particular case when  $\Phi$ does not depend on the parameter, that is $\Phi(x,t) = \varphi(t)$, for a.a. $x\in\Omega$, $t\ge 0$, where $\varphi$ is an Orlicz function, then $L^\Phi$ is an Orlicz space.  The Musielak-Orlicz space $L^\Phi$ with either norm is a Banach function lattice satisfying the Fatou property.  For any $MO$ function $\Phi$, $(L^\Phi)_a = (L^\Phi)_b$. If we do not mention otherwise, we will always consider the $MO$ space $L^\Phi$ equipped with the Luxemburg norm. Extensive information about Musielak-Orlicz spaces one can find in \cite{CUF, DHHR, HH, H1983, HK, KamKub, Mus}.

Given an $eMO$ function $\Phi$, by $\Phi^*$ denote the {\it complementary function} to $\Phi$, that is 
\[
\Phi^*(x,t) = \sup_{s\ge0} \{st - \Phi(x,s)\}, \ \ \ a.a.\  x\in\Omega, \ t\ge0.
\]
The reason that we also consider $eMO$ functions in this paper is that even if a $MO$ function $\Phi$ has finite values, its complementary function $\Phi^*$ may achieve infinite values.  The function $\Phi(x,t) = t$, $x\in \Omega$, $t\ge 0$, is the simplest  example of such functions.
It is well known and not difficult to show  that  $\Phi^*$ is  $eMO$ function that is  a Musielak-Orlicz function with extended values  and $\Phi^{**} =\Phi$. 

 In view of the simple observation that $I_\Phi(f) \le 1$ if and only if $\|f\|_\Phi \le 1 $,
 the following H\"older inequalities are satisfied  for any $f\in L^\Phi$, $g\in L^{\Phi^*}$, 
\[
\left|\int_\Omega f\,g\,d\mu\right| \le \|f\|_\Phi^0\, \|g\|_{\Phi^*}, \ \ \ \ \left|\int_\Omega f\,g\,d\mu\right| \le \|f\|_\Phi\, \|g\|_{\Phi^*}^0. 
\]
We say that $MO$ function $\Phi$ satisfies {\it condition} $\Delta_2$  if there exist $K>0$ and a  non-negative integrable function $h$ on $\Omega$, that is $h\in L^1= L^1(\Omega)$ such that 
\[
\Phi(x, 2t) \le K \Phi(x,t) + h(x), \ \ \ a.a.\  x\in\Omega,\ \  t\ge 0.
\]
  Let $\Phi$ be an Orlicz function, that is $\Phi(x,t)=\varphi(t)$ for a.a. $x\in\Omega$. One can show that when $\mu(\Omega)<\infty$ and $\mu$ is non-atomic then  $\Phi$ satisfies $\Delta_2$ if and only if for some $k>0$ and $u_0\geq 0$,  
\begin{equation}\label{ineq:deltatwo}
\varphi(2u)\leq k\varphi(u)\ \ \ \ \text{for all}\ \ \  u\geq u_0.   \tag{$\Delta_2^\infty$}
\end{equation}

The growth condition $\Delta_2$ of $\Phi$ plays an important role in the theory of $MO$ spaces. 
 Recall now some results in $MO$ spaces which we will need later.

\begin{theorem}\cite[Theorem 7.6,Theorem 8.14]{Mus}\cite[Proposition 3.10, Theorem 3.13]{BS}\label{th:ordconMO}

  Let $\Phi$ be a $MO$ function. The following properties are equivalent.
\begin{itemize}
\item[{\rm(i)}]  The space $L^\Phi$ is order continuous that is  $L^\Phi = (L^\Phi)_a =  (L^\Phi)_b$. 
\item[{\rm(ii)}]  The modular convergence $I_\Phi(u)\to 0$ is equivalent to norm convergence $\|u\|_\Phi \to 0$.
\item[{\rm(iii)}] $\Phi$ satisfies $\Delta_2$.
\end{itemize}
\end{theorem}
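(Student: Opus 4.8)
The plan is to establish the cycle (iii)$\Rightarrow$(i)$\Rightarrow$(ii)$\Rightarrow$(iii); the first two implications are soft, and the real content lies in the last, which I will prove in contrapositive form. Throughout I use the elementary facts, all immediate from convexity of $\Phi(x,\cdot)$, from $\Phi(x,0)=0$, and from the observation $I_\Phi(f)\le 1\Leftrightarrow\|f\|_\Phi\le 1$, that $I_\Phi(\lambda f)\le\lambda I_\Phi(f)$ for $0\le\lambda\le 1$ (hence $I_\Phi(f)\le\|f\|_\Phi$ when $\|f\|_\Phi\le 1$), that $\|f\|_\Phi\ge 1$ whenever $I_\Phi(f)\ge 1$, and that iterating $\Delta_2$ $m$ times gives
\[
\Phi(x,2^m t)\le K^m\Phi(x,t)+\big(1+K+\cdots+K^{m-1}\big)h(x),\qquad a.a.\ x\in\Omega,\ t\ge 0 .
\]

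\emph{(iii)$\Rightarrow$(i).} If $f\in L^\Phi$, then $I_\Phi(\lambda_0 f)<\infty$ for some $\lambda_0>0$, and the iterated $\Delta_2$ inequality upgrades this to $\Phi(\cdot,\lambda|f|)\in L^1$ for \emph{every} $\lambda>0$. Given $0\le f_n\le f\in L^\Phi$ with $f_n\downarrow 0$ a.e., for each fixed $\lambda$ the functions $\Phi(\cdot,\lambda f_n)$ decrease to $0$ a.e.\ (since $\lim_{t\to0+}\Phi(x,t)=0$) and are dominated by $\Phi(\cdot,\lambda f)\in L^1$, so dominated convergence yields $I_\Phi(\lambda f_n)\to 0$; letting $\lambda$ vary, $\|f_n\|_\Phi\to 0$. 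Thus every element of $L^\Phi$ is order continuous, i.e.\ $L^\Phi=(L^\Phi)_a$, and since $(L^\Phi)_a\subseteq(L^\Phi)_b\subseteq L^\Phi$ always holds, the three coincide.

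\emph{(i)$\Rightarrow$(ii).} Norm convergence always implies modular convergence, since $I_\Phi(u_n)\le\|u_n\|_\Phi$ once $\|u_n\|_\Phi\le 1$. For the converse, let $I_\Phi(u_n)\to 0$; by the subsequence principle it suffices to show that every subsequence of $(u_n)$ has a further subsequence converging to $0$ in norm. Since $I_\Phi\to 0$ along any subsequence, we may extract one, relabelled $(u_k)_{k\ge 1}$, with $I_\Phi(u_k)\le 2^{-k}$. Then $u:=\sup_k|u_k|$ satisfies $I_\Phi(u)=\int_\Omega\sup_k\Phi(x,|u_k(x)|)\,d\mu\le\sum_k 2^{-k}<\infty$ (using monotonicity and continuity of $\Phi(x,\cdot)$), so $u\in L^\Phi$; moreover $\sum_k\Phi(x,|u_k(x)|)<\infty$ a.e., whence $u_k(x)\to 0$ a.e.\ (here one needs $\Phi(x,\cdot)>0$ on $(0,\infty)$, i.e.\ that $I_\Phi$ separates points, which is part of the standing hypotheses). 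Hence $g_k:=\sup_{j\ge k}|u_j|$ decreases to $0$ a.e.\ with $0\le g_k\le u\in L^\Phi$, so order continuity gives $\|g_k\|_\Phi\to 0$, and $|u_k|\le g_k$ forces $\|u_k\|_\Phi\to 0$, as required.

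\emph{(ii)$\Rightarrow$(iii).} Arguing by contraposition, assume $\Phi$ fails $\Delta_2$. The crux is to produce pairwise disjoint sets $E_k\in\Sigma$ of finite positive measure and measurable $v_k\ge 0$ vanishing off $E_k$ with $I_\Phi(v_k)\le 2^{-k}$ but $I_\Phi(2v_k)\ge 1$. Granting this, $I_\Phi(v_k)\to 0$ while $I_\Phi(2v_k)\ge 1$ forces $\|2v_k\|_\Phi\ge 1$, so $\|v_k\|_\Phi\ge\tfrac12\not\to 0$, contradicting (ii); the same pieces also contradict (i), since $u:=\sum_k v_k\in L^\Phi$ (as $I_\Phi(u)\le 1$) while the restriction of $2u$ to $\bigcup_{j\ge k}E_j$ has norm at least $\|2v_k\|_\Phi\ge 1$ for every $k$, so $u$ is not order continuous. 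I expect this construction to be the main obstacle: the failure of $\Delta_2$ asserts only that for every $K$ and every $h\in L^1_+$ the inequality $\Phi(x,2t)\le K\Phi(x,t)+h(x)$ fails on a set of positive measure, and manufacturing from this a single function with finite modular whose modular exceeds $1$ after doubling requires, for each $k$, choosing $K=2^k$ with a suitable auxiliary $h$, performing a measurable selection of witnessing amplitudes $t(x)$, using $\sigma$-finiteness of $\mu$ to reduce to finite measure, rescaling the selected amplitude so that the contribution of the $k$-th piece to $I_\Phi$ lies in $(0,2^{-k}]$ while the doubled contribution is $\ge 1$, and finally disjointifying the $E_k$. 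The bookkeeping needed to accommodate atoms of $\mu$ and a possible degeneracy of $\Phi(x,\cdot)$ near $0$ is the only genuinely delicate point; everything else is routine.
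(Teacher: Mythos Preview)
The paper does not supply its own proof of this theorem; it is stated with citations to Musielak and to Bennett--Sharpley and then used as a black box. So there is no in-paper argument to compare against, and the question reduces to whether your sketch is sound.

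Your (iii)$\Rightarrow$(i) and (i)$\Rightarrow$(ii) are correct and standard. The caveat you flag in (i)$\Rightarrow$(ii), that one needs $\Phi(x,\cdot)>0$ on $(0,\infty)$ for $I_\Phi(u_k)\to 0$ to force $u_k\to 0$ a.e., is genuine: with $\Phi(x,t)=\max(t-1,0)$ on a finite measure space, the constant sequence $u_n\equiv 1$ has $I_\Phi(u_n)=0$ while $\|u_n\|_\Phi=\tfrac12$, so (ii) can fail even though (i) and (iii) hold. You are right to isolate this as an implicit standing hypothesis; the paper's Orlicz-function definition does not enforce it, but the cited sources do in the relevant theorems.

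Your (ii)$\Rightarrow$(iii) is a plan, not a proof: you state the target (disjoint $v_k$ with $I_\Phi(v_k)\le 2^{-k}$ and $I_\Phi(2v_k)\ge 1$) and list the ingredients (measurable selection of a witnessing amplitude from the failure of $\Delta_2$ with $K=2^k$, $\sigma$-finiteness, rescaling, disjointification) without executing them. The plan is viable and is exactly the construction behind Proposition~\ref{pr:ellinftyMO} of the paper, which produces disjoint $f_n$ with $I_\Phi(f_n)\le 2^{-n}$ and $\|f_n\|_\Phi=1$ whenever $\Delta_2$ fails; that proposition is also cited rather than proved here, and in the non-atomic case it is precisely what you need. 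The atomic case does require extra care (one must show that failure of $\Delta_2$ forces either infinitely many atoms to witness the blow-up or a non-atomic part on which to run the usual argument), but this is routine once the non-atomic construction is in hand. In short, your outline is correct and aligned with how the paper itself treats these foundational facts, but the hard implication remains a sketch.
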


\begin{theorem}\cite[Theorem 7.10]{Mus}\label{th:sepMO}
  The $MO$ space $L^\Phi(\Omega)$ is separable if and only if the measure $\mu$ is separable and $\Phi$ satisfies $\Delta_2$. 

\end{theorem}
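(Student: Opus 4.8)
\emph{Proof proposal.} The plan is to prove the two implications separately, routing everything through the order continuity supplied by Theorem~\ref{th:ordconMO}; the real work is in the sufficiency.

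For sufficiency, assume $\mu$ is separable and $\Phi\in\Delta_2$. By Theorem~\ref{th:ordconMO} we have $L^\Phi=(L^\Phi)_b$, so the simple functions lying in $L^\Phi$ are norm dense, and it is enough to produce a countable norm-dense subset of them. Using $\sigma$-finiteness together with $\lim_{t\to0^+}\Phi(x,t)=0$ a.e., I would first fix $w\in L^\Phi$ with $w>0$ a.e.\ (choose $w(x)$ pointwise so small that $\int_\Omega\Phi(x,w(x))\,d\mu<\infty$) and set $\Omega_n=\{w>1/n\}$, so that $\mu(\Omega_n)<\infty$, $\chi_{\Omega_n}\in L^\Phi$, and $\Omega_n\uparrow\Omega$; order continuity then lets me approximate any simple $s\in L^\Phi$ in norm by one supported in some $\Omega_n$, because $\|s\chi_{\Omega\setminus\Omega_n}\|_\Phi\to0$. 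Since $\mu$ is separable, the measure algebra of each $\Omega_n$ is separable, so I may fix a countable ring $\mathcal R$ of finite-measure sets, containing all the $\Omega_n$, that is dense in the pseudometric $(A,B)\mapsto\mu(A\triangle B)$ among all finite-measure sets; let $D$ be the countable family of simple functions $\sum_{i=1}^k c_i\chi_{A_i}$ with $c_i\in\mathbb Q+\mathrm{i}\mathbb Q$ and $A_i\in\mathcal R$. Given a simple $s\in L^\Phi$ supported in $\Omega_n$, I would first perturb its coefficients to nearby complex rationals — a small error in $\|\cdot\|_\Phi$ because $\chi_{\Omega_n}\in L^\Phi$ — and then replace each level set $A_i\subseteq\Omega_n$ by some $B_i\in\mathcal R$ with $B_i\subseteq\Omega_n$ (intersect with $\Omega_n$, which lies in $\mathcal R$) and $\mu(A_i\triangle B_i)$ as small as desired. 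The resulting error is controlled by $\|\chi_{A_i\triangle B_i}\|_\Phi$, and this is exactly where $\Delta_2$ is used: by Theorem~\ref{th:ordconMO}(ii) modular convergence implies norm convergence, and $I_\Phi(\lambda\chi_{A_i\triangle B_i})=\int_{A_i\triangle B_i}\Phi(x,\lambda)\,d\mu\to0$ as $\mu(A_i\triangle B_i)\to0$ for every fixed $\lambda$, the integrand being $\mu$-integrable on $\Omega_n$. Hence $D$ is norm dense in $L^\Phi$, which is therefore separable.

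For necessity, I would argue by contraposition, negating each hypothesis in turn. If $\Phi\notin\Delta_2$, then by Theorem~\ref{th:ordconMO} the norm of $L^\Phi$ is not order continuous, and the standard obstruction is a sequence $(u_n)\subseteq L^\Phi$ with pairwise disjoint supports, $\|u_n\|_\Phi=1$ and $I_\Phi(u_n)\le2^{-n}$. Then for every $A\subseteq\N$ the series $\sum_{n\in A}u_n$ has modular at most $1$ and hence belongs to $L^\Phi$, while for distinct $A,B\subseteq\N$ disjointness of supports gives $\bigl\|\sum_{n\in A}u_n-\sum_{n\in B}u_n\bigr\|_\Phi=\bigl\|\sum_{n\in A\triangle B}u_n\bigr\|_\Phi\ge\|u_{n_0}\|_\Phi=1$ for any $n_0\in A\triangle B$; this exhibits an uncountable $1$-separated subset of $L^\Phi$, so $L^\Phi$ is not separable. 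If instead $\mu$ is not separable, then, with $w$ and $\Omega_n$ as above, the measure algebra of some $\Omega_n$ is not separable, so there are uncountably many $A_\alpha\subseteq\Omega_n$ with $\mu(A_\alpha\triangle A_\beta)\ge\delta>0$; since norm convergence in $L^\Phi$ forces convergence in measure on every finite-measure set (immediate from $\Phi(x,t)\to\infty$ as $t\to\infty$), the map $A\mapsto\chi_A$ carries the measure algebra of $\Omega_n$ homeomorphically onto a subset of $L^\Phi$, so $\{\chi_{A_\alpha}\}_\alpha$ is uncountable and discrete in $L^\Phi$ — again non-separable. This completes the contrapositive, and with it the equivalence.

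I expect the main obstacle to be the measure-theoretic bookkeeping inside the sufficiency proof: one must guarantee enough characteristic functions in $L^\Phi$ to build honest step functions (this is the role of the weak unit $w$) and, crucially, convert smallness of $\mu(A\triangle B)$ into smallness of $\|\chi_{A\triangle B}\|_\Phi$, which is precisely the force of $\Delta_2$ via the modular--norm equivalence of Theorem~\ref{th:ordconMO} — and without which the statement fails (e.g.\ $L^\infty$ on $(0,1)$ with Lebesgue measure is non-separable although $\mu$ is separable). The necessity direction, by contrast, is essentially a packaging of the two classical obstructions to separability of a function space: an $\ell^\infty$-type family produced by $\neg\Delta_2$, and a discrete family produced by a non-separable measure algebra.
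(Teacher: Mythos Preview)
The paper does not prove Theorem~\ref{th:sepMO}; it is quoted as a known result from Musielak's monograph \cite[Theorem~7.10]{Mus}, so there is no in-paper argument to compare against. Your proof follows the standard route one finds there (density of simple functions via order continuity from $\Delta_2$, then approximation through the separable measure algebra; and, for necessity, the uncountable $1$-separated family produced by $\neg\Delta_2$ together with the discrete family coming from a non-separable measure algebra), and is correct in outline. One small repair: the claim $\mu(\Omega_n)<\infty$ does not follow from $w\in L^\Phi$ alone, since nothing prevents $\Phi(x,1/n)$ from being arbitrarily small on a set of infinite measure; intersect your $\Omega_n$ with a fixed $\sigma$-finite exhaustion of $\Omega$ to obtain sets that simultaneously have finite measure and characteristic function in $L^\Phi$, after which both halves of your argument go through unchanged.
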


\begin{theorem} \label{th:funcMO}
Let $\Phi$ be $MO$ function.
\begin{itemize}
\item[{\rm (i)}] \cite[Theorem A4]{KamKub} \cite{Mus} \label{th:Kothe}
 $(L^\Phi, \|\cdot\|_\Phi)' = (L^{\Phi^*}, \|\cdot\|_{\Phi^*}^0)$ and  $(L^\Phi, \|\cdot\|_\Phi^0)' = (L^{\Phi^*}, \|\cdot\|_{\Phi^*})$. 
\item[{\rm (ii)}] \cite{Z}
\[
(L^\Phi, \|\cdot\|_\Phi)^* \simeq (L^{\Phi^*}, \|\cdot\|^0_{\Phi^*}) \oplus (L^\Phi)_s,
\]
where $ (L^\Phi)_s = ((L^\Phi)_a)^\perp.$
\item[{\rm (iii)}] \cite{Mus}  $L^\Phi$ is reflexive if and only if both $\Phi$ and $\Phi^*$ satisfy condition $\Delta_2$. 
\end{itemize}
\end{theorem}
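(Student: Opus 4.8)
The plan is to deduce (iii) from the duality identities (i)--(ii) together with the $\Delta_2$-characterisation of order continuity in Theorem~\ref{th:ordconMO}, so that the whole statement becomes a matter of iterating duals. I will use throughout that $L^\Phi$ and $L^{\Phi^*}$ are Banach function spaces with the Fatou property and that $(L^\Phi)_a=(L^\Phi)_b$; consequently, once $L^\Phi$ is known to be order continuous one has $(L^\Phi)_s=((L^\Phi)_a)^\perp=\{0\}$, hence $(L^\Phi)^*=(L^\Phi)'=(L^{\Phi^*},\|\cdot\|^0_{\Phi^*})$ by (i)--(ii). I will also use that $\Phi^*\in\Delta_2$ forces $\Phi^*(x,\cdot)$ to be finite-valued for a.a.\ $x$ (otherwise $\Phi^*(x,2t)=\infty$ while $\Phi^*(x,t)<\infty$ for $t$ just below the point where $\Phi^*(x,\cdot)$ becomes infinite), so that under that hypothesis $L^{\Phi^*}$ is a genuine $MO$ space to which the $MO$-function results apply.

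For sufficiency, assume $\Phi,\Phi^*\in\Delta_2$. Then $L^\Phi$ is order continuous by Theorem~\ref{th:ordconMO}, so $(L^\Phi)^*\simeq(L^{\Phi^*},\|\cdot\|^0_{\Phi^*})$; and $L^{\Phi^*}$ is order continuous as well, so combining the Fatou property with (i) applied to $\Phi^*$ gives $(L^{\Phi^*},\|\cdot\|^0_{\Phi^*})^*=(L^{\Phi^*},\|\cdot\|^0_{\Phi^*})'=(L^{\Phi^{**}},\|\cdot\|_{\Phi^{**}})=(L^\Phi,\|\cdot\|_\Phi)$ since $\Phi^{**}=\Phi$. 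Composing the two isometric identifications gives an isometric isomorphism of $(L^\Phi)^{**}$ onto $L^\Phi$; a direct computation on functionals of the form $g\mapsto\int_\Omega fg\,d\mu$ (with $f\in L^\Phi$, $g\in L^{\Phi^*}$) shows this composition is the canonical evaluation map, so $L^\Phi$ is reflexive.

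For necessity, assume $L^\Phi$ is reflexive. I would first show $L^\Phi$ is order continuous: if it were not, then $\Phi\notin\Delta_2$ by Theorem~\ref{th:ordconMO}, and a Banach function space with the Fatou property that fails order continuity contains an isomorphic copy of $c_0$ --- concretely, the negation of $\Delta_2$ produces a sequence of disjointly supported norm-one ``bumps'' whose finite lattice combinations behave like the unit vectors of $c_0$, which is the mechanism underlying the $\ell^\infty$-embeddings of Section~3 and its $MO$-space analogue --- contradicting reflexivity. Hence $\Phi\in\Delta_2$, so $(L^\Phi)^*\simeq(L^{\Phi^*},\|\cdot\|^0_{\Phi^*})$, that is $L^{\Phi^*}$ with an equivalent norm, and being the dual of a reflexive space it is reflexive. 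Running the same ``reflexive $\Rightarrow$ order continuous'' argument on the Banach function space $L^{\Phi^*}$ and then invoking the extended-valued version of Theorem~\ref{th:ordconMO} yields $\Phi^*\in\Delta_2$.

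The main obstacle is the implication ``$L^\Phi$ reflexive $\Rightarrow$ $L^\Phi$ order continuous'': it rests on the classical but non-formal fact that a non-order-continuous Banach function space must contain $c_0$, which in the Musielak--Orlicz setting is precisely the extraction of an $\ell^\infty$-type family from the failure of $\Delta_2$ --- the construction carried out for $W^{1,\Phi}$ in Section~3. A secondary technical point is that $\Phi^*$ is in general only an $eMO$ function, so the final step must appeal to the extended-valued forms of Theorem~\ref{th:ordconMO} and of the duality identities rather than to their $MO$-function statements.
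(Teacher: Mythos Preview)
The paper does not prove Theorem~\ref{th:funcMO} at all: each of the three items is stated with a citation (Kami\'nska--Kubiak and Musielak for (i), Zaanen for (ii), Musielak for (iii)) and no argument is given. So there is no ``paper's own proof'' to compare against; your deduction of (iii) from (i)--(ii) together with Theorem~\ref{th:ordconMO} is precisely the standard route one finds in the cited sources and is essentially correct.

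Two small comments on what you flag as obstacles. The implication ``$L^\Phi$ reflexive $\Rightarrow$ $L^\Phi$ order continuous'' does not really require the $\ell^\infty$- or $c_0$-embedding machinery of Section~3: it is a general Banach-lattice fact that a reflexive Banach lattice (equivalently, a Banach function space whose dual is again a function space) has order continuous norm, so you may quote this directly rather than reconstructing the disjoint-bump argument. Your concern about $\Phi^*$ being only an $eMO$ function is legitimate; the cleanest way around it is to note that once $\Phi\in\Delta_2$ you have $(L^\Phi)^*\simeq L^{\Phi^*}$, a reflexive Banach function space with the Fatou property, hence order continuous by the same general lattice fact, and the equivalence of order continuity with $\Delta_2$ for $eMO$ functions goes through verbatim (the proofs in \cite{Mus} do not use finiteness of $\Phi$ in any essential way).
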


  Given $MO$ functions $\Phi_i$, $i=1,2$, the symbol $\Phi_2 \prec \Phi_1$ denotes that  there exist a constant $K>0$ and a non-negative function $h\in L^1$ such that
\[
\Phi_2(x,Kt)\le \Phi_1(x,t) + h(t), \ \ \ \ a.a.\ x\in\Omega, \ t\ge 0.
\]
We say that $\Phi_1$ and $\Phi_2$ are {\it equivalent} if $\Phi_2 \prec \Phi_1$ and $\Phi_1 \prec \Phi_2$.  The equivalence of two $MO$ functions preserves condition $\Delta_2$.

\begin{theorem}\cite{Mus}
\label{th:equivMO}
Given $MO$ functions $\Phi_i$, $i=1,2$, $L^{\Phi_1} \subset L^{\Phi_2}$ if and only if $\Phi_2 \prec \Phi_1$.
The embedding of the spaces $L^{\Phi_1} \subset L^{\Phi_2}$ is automatically  bounded. Consequently $L^{\Phi_1} = L^{\Phi_2}$ as sets with equivalent norms if and only if $\Phi_1$ is equivalent to $\Phi_2$.
\end{theorem}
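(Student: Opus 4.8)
\emph{Overview and sufficiency.} The plan is to establish the equivalence $L^{\Phi_1}\subseteq L^{\Phi_2}\iff\Phi_2\prec\Phi_1$; automatic boundedness of the inclusion will emerge from the argument, and the assertion about $L^{\Phi_1}=L^{\Phi_2}$ then follows by applying the equivalence in both directions. For sufficiency, assume $\Phi_2(x,Kt)\le\Phi_1(x,t)+h(x)$ for a.a.\ $x$ and all $t\ge0$, with $K>0$ and $0\le h\in L^1$. If $f\in L^{\Phi_1}$, choose $\lambda>0$ with $I_{\Phi_1}(\lambda f)<\infty$; substituting $t=\lambda|f(x)|$ and integrating gives $I_{\Phi_2}(K\lambda f)\le I_{\Phi_1}(\lambda f)+\|h\|_{L^1}<\infty$, so $f\in L^{\Phi_2}$. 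Moreover, if $\|f\|_{\Phi_1}\le1$ then $I_{\Phi_1}(f)\le1$, hence $I_{\Phi_2}(Kf)\le M:=1+\|h\|_{L^1}$, and since $M\ge1$, convexity of $\Phi_2(x,\cdot)$ together with $\Phi_2(x,0)=0$ yields $I_{\Phi_2}(Kf/M)\le M^{-1}I_{\Phi_2}(Kf)\le1$, so $\|f\|_{\Phi_2}\le M/K$; thus $L^{\Phi_1}\subseteq L^{\Phi_2}$ with bounded inclusion.

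\emph{Necessity, step 1: boundedness and reduction.} Assume now $L^{\Phi_1}\subseteq L^{\Phi_2}$. Using the Fatou property of the $L^{\Phi_i}$, any norm-null sequence has an a.e.\ convergent subsequence, so if $f_n\to f$ in $L^{\Phi_1}$ and $f_n\to g$ in $L^{\Phi_2}$ then, extracting twice, $f=g$ a.e.; hence the inclusion map has closed graph and, by the closed graph theorem, satisfies $\|f\|_{\Phi_2}\le C\|f\|_{\Phi_1}$ for some $C>0$ and all $f$ — this is the claimed automatic boundedness. Combining with the equivalence $\|u\|_\Phi\le1\iff I_\Phi(u)\le1$ we obtain, with $K:=1/C$,
\[
I_{\Phi_1}(f)\le1\quad\Longrightarrow\quad I_{\Phi_2}(Kf)\le1 .
\]
Set $H(x):=\sup_{t\ge0}\bigl(\Phi_2(x,Kt)-\Phi_1(x,t)\bigr)$. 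Since $\Phi_1$ is finite-valued, $\Phi_1(x,\cdot)$ is continuous on $[0,\infty)$, while $\Phi_2(x,\cdot)$ is left-continuous, so this supremum is unchanged if $t$ ranges over the non-negative rationals; hence $H$ is measurable and non-negative. If $H\in L^1$, then $h:=H$ witnesses $\Phi_2\prec\Phi_1$ with the above $K$, so the whole theorem reduces to proving $H\in L^1$.

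\emph{Necessity, step 2 (the main point): $H\in L^1$, and conclusion.} Suppose $H\notin L^1$. Via a measurable selection one chooses, for thresholds $N\uparrow\infty$, measurable near-maximizers $\tau_N$ with $\Phi_2(x,K\tau_N(x))-\Phi_1(x,\tau_N(x))\ge\min(H(x),N)$; note that a finite-valued Orlicz function is automatically unbounded, so $\Phi_1(x,\cdot)$ has range $[0,\infty)$ for a.a.\ $x$, which lets one localize the supremum to the region $\{t:\Phi_1(x,t)>1\}$. Decomposing $\Omega$ according to the size of $\Phi_1(x,\tau_N(x))$ and spreading the selected functions over a disjoint sequence of sets $B_k$ chosen so that $\sum_k\int_{B_k}\Phi_1(x,\tau_{N_k})\,d\mu\le1$ while $\sum_k\int_{B_k}\Phi_2(x,K\tau_{N_k})\,d\mu=\infty$, one produces $f:=\sum_k\tau_{N_k}\chi_{B_k}$ with $I_{\Phi_1}(f)\le1$ but $I_{\Phi_2}(Kf)=\infty$, contradicting the displayed implication. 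Hence $H\in L^1$ and $\Phi_2\prec\Phi_1$. Finally, if $\Phi_1\sim\Phi_2$ then $\Phi_2\prec\Phi_1$ and $\Phi_1\prec\Phi_2$, so by sufficiency $L^{\Phi_1}=L^{\Phi_2}$ with both inclusions bounded, hence with equivalent norms; conversely, equality with equivalent norms makes both inclusion maps bounded, so by necessity both $\prec$ relations hold, i.e.\ $\Phi_1\sim\Phi_2$. The only genuinely non-routine ingredient is the gliding-hump construction in this step — arranging the decomposition and the thresholds $N_k$ so that $I_{\Phi_1}$ stays bounded while $I_{\Phi_2}$ diverges (and handling possible atoms of $\mu$) — everything else being the closed graph theorem and modular bookkeeping.
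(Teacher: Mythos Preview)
The paper does not actually prove this theorem: it simply cites Musielak's monograph \cite{Mus} for the characterization $L^{\Phi_1}\subset L^{\Phi_2}\iff\Phi_2\prec\Phi_1$, and invokes a general fact about Banach function spaces with the Fatou property \cite[Proposition~2.10]{BS} for the automatic boundedness of the inclusion. So the comparison is between your self-contained argument and a pair of black-box references.

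Your sufficiency argument and your closed-graph derivation of boundedness are correct and complete; the latter is essentially how the Bennett--Sharpley result is proved anyway. The necessity direction is where the real content lies, and there your Step~2 sketches the standard gliding-hump idea but remains too schematic to stand as a proof. Two concrete issues. First, the claim that one may ``localize the supremum to the region $\{t:\Phi_1(x,t)>1\}$'' is not justified: the unboundedness of $\Phi_1(x,\cdot)$ does not by itself force the supremum defining $H(x)$ to be (nearly) attained at large $t$, and the contribution from small $t$ must be handled separately --- for instance by testing the implication $I_{\Phi_1}(f)\le1\Rightarrow I_{\Phi_2}(Kf)\le1$ directly on $f=\Phi_1^{-1}(\cdot,1)\chi_A$ with $\mu(A)\le1$. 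Second, the heart of the matter --- producing disjoint $B_k$ with $\sum_k\int_{B_k}\Phi_1(x,\tau_{N_k})\,d\mu\le1$ while $\sum_k\int_{B_k}\Phi_2(x,K\tau_{N_k})\,d\mu=\infty$ --- is asserted rather than carried out, and this is precisely the step that requires a case split (whether $H$ is essentially unbounded on a set of finite measure, or merely non-integrable because $\mu(\Omega)=\infty$) together with the treatment of atoms you yourself flag. Your overall strategy is the classical one and is completable, but as written Step~2 is a plan rather than a proof.
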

\begin{proof}
The proof of the first part can be found in \cite{Mus}. The automatic boundedness of the embedding among $MO$ spaces follows from  \cite[Proposition 2.10, p. 13]{BS}, in view of the Fatou property of $MO$ spaces.

\end{proof}

 For a $eMO$ function $\Phi: \Omega \times \R_+ \to [0,\infty]$ define  the {\it generalized inverse} of $\Phi$ by the formula
 \[
 \Phi^{-1}(x,t)=\inf\{u:\Phi(x,u)\geq t\}, \ \ \ \ a.a. \ x\in\Omega, \ \ t \ge 0.
  \]
  Clearly we have
$ \Phi(x,\Phi^{-1}(x,t)) \le t$ for  a.a. $ x\in\Omega$ and $t \ge 0$.

\begin{proposition} Let $\Phi$ be $eMO$ function. Then 
\begin{itemize}
\item[{(i)}] The {\it Young inequality} 
\[
st \le \Phi(x,s) + \Phi^*(x,t), \ \ \ \ a.a. \ x\in\Omega \ \ s,t \ge 0,
\]
\item[{(ii)}] For  a.a. $x\in \Omega$,  $t\ge 0$, 
\begin{equation}\label{eq:inverseineq}
 t \le (\Phi^*)^{-1}(x,t) \Phi^{-1}(x,t) \le 2t.
\end{equation}
\end{itemize}
\end{proposition}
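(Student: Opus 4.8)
The plan is to read off (i) directly from the definition of the complementary function and then to leverage (i) for both bounds in (ii). For (i), fix (a.e.) $x\in\Omega$ and $s,t\ge0$; since $\Phi^*(x,t)=\sup_{r\ge0}\{rt-\Phi(x,r)\}$, choosing $r=s$ in the supremum gives $st-\Phi(x,s)\le\Phi^*(x,t)$, which rearranges to the Young inequality — the exceptional null set and the measurability in $x$ causing no trouble since $\Phi$ and $\Phi^*$ are $eMO$ functions. For the upper estimate in (ii), substitute $s=\Phi^{-1}(x,t)$ into (i) and replace the free variable $t$ there by $(\Phi^*)^{-1}(x,t)$ to obtain
\[
\Phi^{-1}(x,t)\,(\Phi^*)^{-1}(x,t)\le\Phi\bigl(x,\Phi^{-1}(x,t)\bigr)+\Phi^*\bigl(x,(\Phi^*)^{-1}(x,t)\bigr);
\]
the first summand is at most $t$ by the elementary property of the generalized inverse recorded just above the Proposition, and the same property applied to the $eMO$ function $\Phi^*$ bounds the second summand by $t$ as well, so the right-hand side is $\le2t$.

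The lower estimate is the crux. Fix $t>0$ and an admissible $x$, and abbreviate $\varphi=\Phi(x,\cdot)$, $\psi=\Phi^*(x,\cdot)$, $u=\Phi^{-1}(x,t)$, $v=(\Phi^*)^{-1}(x,t)$. Since $\lim_{r\to0+}\varphi(r)=\varphi(0)=0$ we have $u>0$, and likewise $v>0$; discarding the degenerate case (which does not arise when $\sup_r\varphi(r)=\infty$, as is implicit in $\Phi^*$ being $eMO$) we may take $0<u,v<\infty$. Because $\psi$ is nondecreasing and $v=\inf\{s:\psi(s)\ge t\}$, the claim $uv\ge t$ is equivalent to $\psi(s)<t$ for every $s\in[0,t/u)$. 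I would fix such an $s$ and estimate $\psi(s)=\sup_{r\ge0}\{sr-\varphi(r)\}$ by splitting on $r$: for $0\le r\le u$ one has $sr-\varphi(r)\le sr\le su<t$; for $r>u$ one first notes $\varphi(r')\ge t$ whenever $r'>u=\varphi^{-1}(t)$ (by the definition of the infimum together with monotonicity of $\varphi$), then uses that $r\mapsto\varphi(r)/r$ is nondecreasing on $(0,\infty)$ (convexity of $\varphi$ with $\varphi(0)=0$) and lets $r'\downarrow u$ to conclude $\varphi(r)\ge rt/u$, so that $sr-\varphi(r)\le r(s-t/u)\le0<t$. Hence $\psi(s)\le su<t$, which is what was needed.

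I expect the main obstacle to be exactly this last step: passing from the defining property of the generalized inverse of $\varphi$ to the growth bound $\varphi(r)\ge rt/u$ for $r>u$ via the monotonicity of $r\mapsto\varphi(r)/r$, while handling the one-sided-continuity and infimum technicalities intrinsic to generalized inverses (and, once $x$ is reinstated, checking that the pointwise inequalities hold for a.e. $x$, which is automatic). By contrast, part (i) and the upper bound of (ii) are immediate once (i) is in hand, and the splitting of the supremum above involves only elementary estimates.
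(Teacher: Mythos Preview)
Your proof is correct. Part (i) and the upper bound in (ii) coincide with the paper's argument (Young's inequality plus $\Phi(x,\Phi^{-1}(x,t))\le t$). For the lower bound in (ii), however, you take a genuinely different route. The paper invokes the integral representations $\Phi(x,t)=\int_0^t\Phi'(x,s)\,ds$ and $\Phi^*(x,t)=\int_0^t(\Phi')^{-1}(x,s)\,ds$, shows $t/u\le\Phi'(x,u)$ whenever $\Phi(x,u)\ge t$, and from this deduces $\Phi^*(x,t/u)\le t$, hence $u(\Phi^*)^{-1}(x,t)\ge t$, finally taking the infimum over admissible $u$. You instead fix $u=\Phi^{-1}(x,t)$, reformulate $uv\ge t$ as $\psi(s)<t$ for $s<t/u$, and bound the defining supremum of $\psi(s)$ by splitting on $r\le u$ versus $r>u$, using only the monotonicity of $r\mapsto\varphi(r)/r$ (pure convexity with $\varphi(0)=0$) to get $\varphi(r)\ge rt/u$ for $r>u$.

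Your approach is more elementary in that it avoids right derivatives and the representation of $\Phi^*$ via $(\Phi')^{-1}$, which for general $eMO$ functions (with possible infinite values and jumps of $\Phi'$) require some care; it also delivers the strict inequality $\psi(s)<t$ directly, sidestepping the small subtlety in passing from $\Phi^*(x,t/u)\le t$ to $(\Phi^*)^{-1}(x,t)\ge t/u$. The paper's derivative-based argument, on the other hand, is the classical route for $N$-functions and makes the duality between $\Phi'$ and $(\Phi')^{-1}$ explicit. Your remark about the ``degenerate case'' $u=\infty$ or $v=\infty$ is harmless: since a nonzero convex function with value $0$ at $0$ is automatically unbounded, both $u$ and $v$ are finite for $t>0$.
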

\begin{proof} (i)  It is a direct consequence of the definition of the complementary function.

(ii) This is well known for Orlicz $N$-functions (e.g. \cite{KR}). For the sake of completeness we provide a short proof here.  For the right inequality, take  $x\in\Omega$ and $t\geq 0$, and let $u=\Phi^{-1}(x,t)$ and $v=(\Phi^*)^{-1}(x,t)$. Then 
\[ 
(\Phi^*)^{-1}(x,t) \Phi^{-1}(x,t)=uv\leq \Phi(x,u)+\Phi^*(x,v)\leq 2t
\]
by the Young inequality.
For the left inequality, recall that 
\[
\Phi(x,t)=\int\limits_0^t\Phi'(x,s)ds, \ \ \ \Phi^*(x,t)=\int\limits_0^t (\Phi')^{-1}(x,s)ds, \ \ \ \ a.a.\ x\in \Omega, \ \  t\ge 0, 
\]
where $\Phi'(x,t)$ is the right derivative of $\Phi(x,t)$  with respect to $t\ge 0$ for a.a. $x\in \Omega$, and $(\Phi')^{-1}$ is a generalized inverse as defined above for $\Phi$. For any $x\in\Omega$ and $u>0$ such that $\Phi(x,u)=\int\limits_{0}^u\Phi'(x,s)ds\geq t$ we have
 \[
 \frac{t}{u}\leq\frac{\Phi(x,u)}{u}=\frac{1}{u}\int\limits_0^u \Phi'(x,s)ds\leq\frac{\Phi'(x,u)u}{u}=\Phi'(x,u).
 \]
It follows in view of $(\Phi')^{-1}(x,\Phi'(x,u))\le u$,
 \[
 \Phi^*\left(x,\frac{t}{u}\right)=\int\limits_0^{\frac{t}{u}}(\Phi')^{-1}(x,s)ds\leq (\Phi')^{-1}\left(x,\frac{t}{u}\right)\frac{t}{u}\leq (\Phi')^{-1}(x,\Phi'(x,u))\frac{t}{u}\leq t. 
 \]
Applying now $(\Phi^*)^{-1}$ to both sides of the above inequality,  for  any $u, t\ge 0$ with $\Phi(x,u) \ge t$, we get
 \[
 u(\Phi^{*})^{-1}(x,t)\geq t.
 \]
 Hence for a.a. $x\in \Omega$, $t\ge 0$,
 \[
 \Phi^{-1}(x,t)(\Phi^*)^{-1}(x,t)=\inf\{u:\Phi(x,u)\geq t\}(\Phi^*)^{-1}(x,t)\geq t,
 \]
which is the left inequality of (\ref{eq:inverseineq}).

\end{proof}


\begin{lemma}\label{delta2compl}
Let $\Phi'(x,t)$ be the right derivative of a $MO$ function $\Phi(x,t)$ for a.a. $x\in \Omega$ with respect to $t \ge 0$.  If   exists a constant $k > 1$ such that
\begin{equation}\label{eq:UC1}
 \Phi'(x,2 t) \ge k \Phi'(x, t), \ \ \ \  a.a. \ x\in\Omega, \ \ \ t\ge 0,
 \end{equation}
then $\Phi^*$ satisfies condition $\Delta_2$. 
\end{lemma}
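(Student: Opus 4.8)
The plan is to argue directly, using the integral representations
$\Phi(x,t)=\int_0^t\Phi'(x,s)\,ds$ and $\Phi^*(x,t)=\int_0^t(\Phi')^{-1}(x,s)\,ds$
(with $\Phi'(x,\cdot)$ the right derivative and $(\Phi')^{-1}$ the generalized inverse), as recorded above, and in fact to obtain $\Delta_2$ for $\Phi^*$ with trivial majorant $h\equiv 0$ and a constant depending only on $k$. Fix $x$ for which \eqref{eq:UC1} holds and abbreviate $\varphi=\Phi'(x,\cdot)$, $\psi=(\Phi')^{-1}(x,\cdot)$. Since $\Phi$ is a finite-valued $MO$ function, $\varphi$ is real-valued and non-decreasing; since $\Phi(x,\cdot)\not\equiv 0$, iterating \eqref{eq:UC1} gives $\varphi(2^n t)\ge k^n\varphi(t)$, so $\varphi$ is unbounded and hence $\psi(s)<\infty$ for every $s\ge 0$.

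The one substantive step is to transfer \eqref{eq:UC1} to a dilation inequality for $\psi$: for every $s\ge 0$,
\[
\psi(ks)\le 2\,\psi(s).
\]
Indeed, if $\varphi(u)\ge s$ then $\varphi(2u)\ge k\varphi(u)\ge ks$, so $2u$ lies in the set $\{v:\varphi(v)\ge ks\}$ defining $\psi(ks)$; taking the infimum over all such $u$ (a nonempty set, because $\varphi$ is unbounded) yields the claim. Iterating, $\psi(k^n s)\le 2^n\psi(s)$ for all $n\in\N$ and $s\ge 0$.

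Finally, choose $n=n(k)\in\N$ with $k^n\ge 2$; this choice is independent of $x$ since $k$ is a fixed constant. As $\psi\ge 0$, for every $t\ge 0$,
\[
\Phi^*(x,2t)\le\Phi^*(x,k^n t)=\int_0^{k^n t}\psi(s)\,ds=k^n\int_0^{t}\psi(k^n r)\,dr\le (2k)^n\int_0^{t}\psi(r)\,dr=(2k)^n\,\Phi^*(x,t),
\]
where the third equality is the substitution $s=k^n r$ and the last inequality uses the iterated dilation estimate together with $k^n 2^n=(2k)^n$. Since this holds for a.a.\ $x\in\Omega$ and all $t\ge0$, $\Phi^*$ satisfies $\Delta_2$ with $K=(2k)^n$ and $h\equiv0$. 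I do not foresee a genuine obstacle: the only points needing a word of care are that $\varphi$ is finite and unbounded (so that $\psi$ is finite and the integral formula for $\Phi^*$ is legitimate) and that $n$ may be chosen uniformly in $x$, both immediate from the hypotheses. A less computational alternative would be to observe that \eqref{eq:UC1} forces $\Phi$ into the class $\nabla_2$ and then invoke the classical equivalence $\Phi\in\nabla_2\iff\Phi^*\in\Delta_2$, but the direct estimate above is shorter and self-contained.
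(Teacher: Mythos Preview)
Your proof is correct. The paper takes a somewhat different route: it first integrates \eqref{eq:UC1} to obtain the $\nabla_2$-type inequality $\Phi(x,2s)\ge 2k\,\Phi(x,s)$, equivalently $\Phi(x,s/2)\le\frac{1}{2k}\Phi(x,s)$, and then plugs this into the supremum definition of the conjugate to get $\Phi^*(x,ks)\le 2k\,\Phi^*(x,s)$, from which $\Delta_2$ follows since $k>1$. This is essentially the ``less computational alternative'' you describe at the end. Your argument instead stays at the level of the right derivative and its generalized inverse: you transfer the dilation inequality to $\psi=(\Phi')^{-1}(x,\cdot)$ via $\psi(ks)\le 2\psi(s)$ and only then integrate. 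Both approaches are short and yield the same intermediate estimate $\Phi^*(x,kt)\le 2k\,\Phi^*(x,t)$; the paper's route avoids the side remarks about $\varphi$ being unbounded and the validity of the integral formula for $\Phi^*$, while yours makes the mechanism at the derivative level more visible and produces an explicit doubling constant $(2k)^n$.
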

\begin{proof}
Integrating the inequality (\ref{eq:UC1}), we get for $s\ge 0$,
\begin{align*}
\int_0^s \Phi'(x,2t)\, dt &= \frac{1}{2}\int_0^{2s} \Phi'(x,u) \, du \\
&= \frac{1}{2} \Phi(x,2s )\ge k\int_0^s \Phi'(x,t)\, dt = k \Phi(x,s).
\end{align*}
Hence $\Phi(x, 2s) 
\ge 2k \Phi(x,s)$. 
 It follows $\Phi(x, \frac{s}{2}) \le \frac{1}{2k} \Phi(x,s)$, and thus
\begin{align*}
\Phi^*(x,2s) =\sup_{u\ge0}\left\{us - \Phi\left(x,\frac{u}{2}\right)\right\} \ge \sup_{u\ge 0} \left\{us - \frac{1}{2k} \Phi(x,u)\right\} = \frac{1}{2k}\Phi^*(x,2k s).
\end{align*}
Hence for a.a. $x\in \Omega$, $s\ge 0$,
\[
\Phi^*(x,ks) \le 2k \Phi^*(x,s),
\]
which implies $\Delta_2$ condition of $\Phi^*$ by the assumption $k > 1$.

\end{proof}

  The important class of Musielak-Orlicz spaces are   {\it Nakano spaces},  recently also called {\it variable exponent Lebesgue spaces}. Let $1\le p(x) < \infty$ for a.a.  $x\in\Omega$, be a measurable function and let
\begin{equation}\label{eq:lebvar}
\Phi(x,t) = \frac{t^{p(x)}}{p(x)}, \ \ \  a.a. \  x\in\Omega,\ t\ge 0. 
\end{equation}
Then $L^{p(\cdot)} = L^\Phi$  is a variable exponent Lebesgue space. For $1<p(x)<\infty$ a.e. in $\Omega$, $t\ge 0$,
\[
\Phi^*(x,t) = \frac{t^{q(x)}}{q(x)},
\]
where $\frac{1}{p(x)}  + \frac{1}{q(x)} =1$. Let further
\[
p^+ = \esssup_{x\in\Omega} p(x) \ \ \ \text{and} \ \ \ \ p^- = \essinf_{x\in\Omega} p(x).
\]

In the literature there is another version of the Nakano spaces given by the $MO$ function $t^{p(x)}$ for $1\le p(x) < \infty$  a.a. $x\in \Omega$, $t\ge 0$. The spaces defined in either way are equal as sets with  equivalent norms. In order to avoid confusion we will consider here only the spaces given by formula (\ref{eq:lebvar}).

\begin{theorem}\label{th:deltalpx}
The variable exponent $MO$ function $\Phi(x,t) = t^{p(x)}/p(x)$, $1\le p(x) <\infty$, for a.a. $x\in\Omega$, $t\ge 0$,  satisfies $\Delta_2$ if and only if $p^+ < \infty$. Its complement function $\Phi^*$ satisfies $\Delta_2$ if and only if $p^- > 1$.
\end{theorem}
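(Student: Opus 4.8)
\noindent\emph{Sketch of a proof.}
The plan is to build everything on the pointwise scaling identity $\Phi(x,2t)=2^{p(x)}\Phi(x,t)$, valid for every $t\ge 0$ and a.a.\ $x\in\Omega$ since $\Phi(x,\cdot)$ is a pure power. For the statement about $\Phi$ itself: if $p^+<\infty$, the identity immediately gives $\Phi(x,2t)\le 2^{p^+}\Phi(x,t)$ for a.a.\ $x$ and all $t$, which is $\Delta_2$ with $K=2^{p^+}$ and $h\equiv 0$. For the converse I would argue by contradiction. Suppose $\Phi$ satisfies $\Delta_2$ with constant $K>0$ and $h\in L^1$ but $p^+=\infty$. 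Pick $N\in\N$ with $2^N>K$; since $p^+=\infty$, the set $E=\{x\in\Omega:p(x)\ge N\}$ has positive measure. Substituting the scaling identity into the $\Delta_2$ inequality gives $(2^{p(x)}-K)\Phi(x,t)\le h(x)$ for all $t\ge 0$ and a.a.\ $x$; restricting to $E$, where $2^{p(x)}-K\ge 2^N-K>0$, this forces $\Phi(x,t)\le h(x)/(2^N-K)<\infty$ simultaneously for all $t\ge 0$ and a.a.\ $x\in E$ --- impossible, since $\Phi(x,t)=t^{p(x)}/p(x)\to\infty$ as $t\to\infty$ and $\mu(E)>0$. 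Hence $p^+<\infty$.

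For $\Phi^*$, the first step is the elementary remark that $p^->1$ holds if and only if $q^+<\infty$, where $q(x)=p(x)/(p(x)-1)$ is the conjugate exponent and $q^+=\esssup_{x\in\Omega}q(x)$: the function $s\mapsto s/(s-1)$ is decreasing on $(1,\infty)$, so $p^->1$ gives $q^+\le p^-/(p^--1)<\infty$, while if $p^-=1$ then for every $M>1$ the set $\{x:q(x)>M\}=\{x:p(x)<M/(M-1)\}$ has positive measure, whence $q^+=\infty$. Now, if $p^->1$ then $p(x)>1$ a.e., so $\Phi^*(x,t)=t^{q(x)}/q(x)$ is again a variable exponent $MO$ function, with exponent $q$ satisfying $1<q(x)\le q^+<\infty$; applying the already proved first assertion of the theorem (with $q$ in place of $p$) shows that $\Phi^*$ satisfies $\Delta_2$. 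For the converse, suppose $\Phi^*$ satisfies $\Delta_2$ but $p^-=1$. If $\mu(\{x:p(x)=1\})>0$, then on that set $\Phi^*(x,\cdot)$ equals $0$ on $[0,1]$ and $+\infty$ on $(1,\infty)$, so taking $t=\frac{3}{4}$ in the $\Delta_2$ inequality gives $+\infty=\Phi^*(x,\frac{3}{2})\le K\,\Phi^*(x,\frac{3}{4})+h(x)=h(x)$, contradicting $h\in L^1$. If instead $\mu(\{x:p(x)=1\})=0$, then $\Phi^*(x,t)=t^{q(x)}/q(x)$ for a.a.\ $x$, with $q^+=\infty$, so the first assertion applied to $\Phi^*$ shows that $\Delta_2$ fails. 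In either case we contradict the assumption, so $p^->1$.

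I expect no genuine obstacle: once the power structure is exploited, every computation is a one-liner. The two points deserving a little care are the genuinely extended-valued situation $\mu(\{x:p(x)=1\})>0$ --- where $\Phi^*$ is not even an $MO$ function and one must refute $\Delta_2$ directly rather than by reduction to the first part --- and the routine verification that $p^->1\iff q^+<\infty$, which is precisely what turns the statement about $\Phi^*$ into an instance of the statement about $\Phi$.
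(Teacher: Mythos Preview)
Your proposal is correct and follows essentially the same approach as the paper: exploit the scaling identity $\Phi(x,2t)=2^{p(x)}\Phi(x,t)$ to handle $\Phi$ directly, then pass to the conjugate exponent $q(x)=p(x)/(p(x)-1)$ and invoke the first part for $\Phi^*$, using the monotonicity of $s\mapsto s/(s-1)$ to translate $q^+<\infty$ into $p^->1$. In fact you are slightly more careful than the paper, which tacitly writes $\Phi^*(x,t)=t^{q(x)}/q(x)$ throughout and does not explicitly dispose of the extended-valued case $\mu(\{x:p(x)=1\})>0$ that you treat separately.
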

 \begin{proof}
 Assume that   $\Phi(x,t) = t^{p(x)}/p(x)$ satisfies $\Delta_2$. There exists $C\geq2$ and an positive, integrable function $h$ such that for a.a. $x\in\Omega$ and $t\geq0$ we have
 \[
 2^{p(x)}\frac{t^{p(x)}}{p(x)}=\Phi(x,2t)\leq C\Phi(x,t)+h(x)=C\frac{t^{p(x)}}{p(x)}+ h(x).
 \]
 Hence, 
 \[(2^{p(x)}-C)\frac{t^{p(x)}}{p(x)}\leq h(x).\]
 If $p(x)=1$ for a.e. $x\in \Omega$, then clearly $\Phi$ satisfies $\Delta_2$ condition. Now if $p(x)\neq 1$ for a.a. $x\in\Omega$, then for a.a. $x\in\Omega$ it follows that $\sup\limits_{t>0}\frac{t^{p(x)}}{p(x)}=\infty$. Therefore we conclude that 
 \[
 2^{p(x)}\leq C, \ \ \ \ a.a. \ x\in \Omega.
 \]
It follows   that for a.a. $x\in\Omega$ we have $p(x)\leq\log_2C$ and so $p^+\leq\log_2C<\infty$. If on the other hand $p^+<\infty$, then for a.a. $x\in\Omega$ and any $t\geq0$ we have
 \[
 \Phi(x,2t)=2^{p(x)}\frac{t^{p(x)}}{p(x)}\leq 2^{p^+}\frac{t^{p(x)}}{p(x)}=2^{p+}\Phi(x,t), 
 \]
 that is $\Phi$ satisfies $\Delta_2$.
 
   As for the second assertion, notice that $\Phi^*(x,t)=t^{q(x)}/q(x)$, where $q(x)=p(x)/(p(x)-1)$. By the first part of the proof $\Phi^*(x,t)$ satisfies $\Delta_2$ if and only if $q^+<\infty$.  By the fact that $u/(u-1)$ is a decreasing function on $(1,\infty)$ and $\lim\limits_{u\to1^+} u/(u-1)=\infty$,  we have
 \[
 q(x)=\frac{p(x)}{p(x)-1}\leq\frac{p^-}{p^--1}, \ \ \ \  a.a. \ x\in\Omega.
 \]
 Therefore we conclude that $q^+<\infty$ if and only if $p^->1$.

\end{proof}

Another important class of $MO$ functions is a class of double phase functionals consisting of $\Phi: \Omega\times \mathbb{R}_+ \to \mathbb{R}_+$ such that 
\[
\Phi(x,t) = t^{p(x)} + a(x) t^{r(x)},
\] 
where $a,p,r$ are measurable functions on $\Omega$,  $a(x) \ge 0$ a.e. and $1\le p(x) \le r(x) <\infty$ a.e.. Denote by $r^+$ and $r^-$ analogously as $p^+$ and $p^-$ respectively.

\begin{theorem}\label{th:deltadobphase}
Let 
\[
\Phi(x,t) = t^{p(x)} + a(x) t^{r(x)},
\]
where $a,p,r$ are measurable functions on $\Omega$,  $a(x) \ge 0$ a.e. and $1\le p(x) \le r(x) <\infty$ a.e..

{\rm{(i)}} If $r^+ < \infty$ then $\Phi$ satisfies $\Delta_2$.

{\rm{(ii)}} If $p^- > 1$ then $\Phi^*$ satisfies condition $\Delta_2$.

{\rm{(iii)}} If in addition we assume  that $p(x) < r(x)$ for a.a. $x\in {\Omega_1}$,
where $\Omega_1 = \supp a$,
then  $\Phi$ satisfies condition $\Delta_2$ if and only if $p^+< \infty$ and $r^+|_{\Omega_1}=\esssup_{x\in \Omega_1} r(x) < \infty$. 
\end{theorem}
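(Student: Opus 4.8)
The plan is to prove both implications by elementary pointwise estimates on the growth of $\Phi(x,\cdot)$, using that off $\Omega_1$ only the $t^{p(x)}$-term is present, whereas on $\Omega_1$ the strictly higher power $t^{r(x)}$ eventually dominates.

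For the sufficiency I would assume $p^+<\infty$ and $r^+|_{\Omega_1}<\infty$, put $M=\max\{p^+,\,r^+|_{\Omega_1}\}$, and estimate separately. For a.a.\ $x\notin\Omega_1$ one has $a(x)=0$, so $\Phi(x,2t)=2^{p(x)}t^{p(x)}\le 2^{p^+}\Phi(x,t)$; for a.a.\ $x\in\Omega_1$,
\[
\Phi(x,2t)=2^{p(x)}t^{p(x)}+a(x)2^{r(x)}t^{r(x)}\le 2^{M}\bigl(t^{p(x)}+a(x)t^{r(x)}\bigr)=2^{M}\Phi(x,t).
\]
Hence $\Phi(x,2t)\le 2^{M}\Phi(x,t)$ for a.a.\ $x\in\Omega$ and all $t\ge0$, i.e.\ $\Phi$ satisfies $\Delta_2$ with $h\equiv0$; this is just part (i) with $r^+$ relaxed to $r^+|_{\Omega_1}$.

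For the necessity I would start from a $\Delta_2$ inequality $\Phi(x,2t)\le K\Phi(x,t)+h(x)$, valid for all $t\ge0$ and a.a.\ $x$, with (after enlarging $K$) $K\ge2$ and $0\le h\in L^1$; rearranging gives
\[
\bigl(2^{p(x)}-K\bigr)t^{p(x)}+a(x)\bigl(2^{r(x)}-K\bigr)t^{r(x)}\le h(x).
\]
I then fix $x$ outside the relevant null set, so that this holds for every $t\ge0$ with $h(x)<\infty$, and moreover $a(x)>0$ and $p(x)<r(x)$ whenever $x\in\Omega_1$. If $x\notin\Omega_1$ the inequality reads $(2^{p(x)}-K)t^{p(x)}\le h(x)$ for all $t$; since $p(x)\ge1$, letting $t\to\infty$ forces $2^{p(x)}\le K$, i.e.\ $p(x)\le\log_2K$. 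If $x\in\Omega_1$, I divide by $t^{r(x)}$ and let $t\to\infty$: since $p(x)<r(x)$ the first term vanishes in the limit, leaving $a(x)(2^{r(x)}-K)\le0$, hence $2^{r(x)}\le K$, so $r(x)\le\log_2K$ (and a fortiori $p(x)\le\log_2K$). Therefore $p(x)\le\log_2K$ a.e.\ on $\Omega$ and $r(x)\le\log_2K$ a.e.\ on $\Omega_1$, which yields $p^+\le\log_2K<\infty$ and $r^+|_{\Omega_1}\le\log_2K<\infty$.

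I do not expect a genuine obstacle; the only points needing care are the bookkeeping of a single null set off which the $\Delta_2$ inequality holds simultaneously for all $t$ (so one may legitimately send $t\to\infty$ at a fixed $x$), the harmless reduction to $K\ge2$, and reading $\Omega_1=\supp a$ as the essential set $\{a>0\}$, so that $a(x)>0$ a.e.\ on $\Omega_1$ and $a(x)=0$ a.e.\ off $\Omega_1$ — which is what makes the two cases exhaustive. The standing hypothesis $p(x)<r(x)$ on $\Omega_1$ enters exactly in the second case, ensuring that the $t^{r(x)}$-term, not the $t^{p(x)}$-term, governs the large-$t$ behaviour.
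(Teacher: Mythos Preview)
Your proposal addresses only part~(iii); part~(ii) of the statement is a separate claim that you do not touch. Your sufficiency argument does yield (i) as a special case (since $r^+<\infty$ forces $p^+\le r^+<\infty$ and $r^+|_{\Omega_1}\le r^+<\infty$), but (ii) --- that $p^->1$ implies $\Phi^*\in\Delta_2$ --- needs its own argument. The paper handles it via the derivative criterion of Lemma~\ref{delta2compl}: one checks $\Phi'(x,2t)\ge 2^{p^--1}\Phi'(x,t)$ with $2^{p^--1}>1$, and that lemma then delivers $\Delta_2$ for $\Phi^*$. This step is short, but it does not follow from anything in your write-up.

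For part~(iii) itself, your proof is correct and takes a genuinely simpler route than the paper's. The paper argues the necessity by contradiction, splitting into three cases ($p$ essentially unbounded off $\Omega_1$; $p$ essentially unbounded on $\Omega_1$; $p^+<\infty$ but $r$ essentially unbounded on $\Omega_1$), and in each case exhibiting a set of positive measure on which $h_c(x)=\sup_{t\ge0}(\Phi(x,2t)-c\Phi(x,t))=\infty$ for every $c$, contradicting $h_c\in L^1$. Your direct route --- fixing a.e.\ $x$ with $h(x)<\infty$, rearranging the $\Delta_2$ inequality, and letting $t\to\infty$ (after dividing by $t^{r(x)}$ when $x\in\Omega_1$) --- yields the pointwise bounds $p(x)\le\log_2K$ and, on $\Omega_1$, $r(x)\le\log_2K$ in one stroke, with no case split. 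Both proofs use the strict inequality $p(x)<r(x)$ on $\Omega_1$ to make the $t^{r(x)}$-term dominant; your version makes that role more transparent and avoids the somewhat heavier bookkeeping of the paper's Case~$3^0$.
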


\begin{proof} 
If $\mu(\Omega_1) = 0$ that is  $a(x) = 0$ for a.a. $x\in \Omega$, then the space $L^\Phi$ is reduced to the Lebesgue variable space $L^{p(\cdot)}$, and the conclusion follows from Theorem \ref{th:deltalpx}. Thus  assume $\mu(\Omega_1) > 0$.

(i) If $p^+<\infty$ and $r^+|_{\Omega_1} < \infty$ then
\[
\Phi(x,2t) = 2^{p(x)} t^{p(x)} + 2^{r(x)} a(x) t^{r(x)} \le 2^r(t^{p(x)} + a(x) t^{r(x)}) = 2^r \Phi(x,t),
\]
where $r = \max (p^+, r^+|_{\Omega_1})$, and so $\Phi$ satisfies condition $\Delta_2$.

(ii) 
Let $p^-> 1$. Then for   a.a. $x\in\Omega$, $t\ge 0$,
\begin{align*}
 \Phi'(x,2 t)  &= p(x) 2^{p(x)-1} t^{p(x)-1}  + a(x) r(x) 2^{r(x) -1} t^{r(x) - 1}\\
 &\ge 2^{p^- - 1} p(x) t^{p(x) -1}  + a(x) r(x) 2^{p^- -1} t^{r(x) - 1} =  2^{p^- - 1}\Phi'(x, t),
 \end{align*}
 where  $2^{p^- - 1}>1$. We conclude by Lemma \ref{delta2compl}.

(iii) Now let $\Phi$ satisfy $\Delta_2$ and $p^+ = \infty$ or $r^+|_{\Omega_1} = \infty$. Then by definition of $\Delta_2$, there exists $c>0$ such that the function
\[
h_c(x) = \sup_{t\ge 0} (\Phi(x,2t) - c \Phi(x,t))
\]
belongs to $L^1$. We shall consider three cases.

Case $1^0$. Let $p^+|_{\Omega \setminus \Omega_1} =  \esssup_{x\in \Omega\setminus \Omega_1} p(x) =\infty$ if $\mu(\Omega\setminus \Omega_1) > 0$. Then $\Phi(x,t) = t^{p(x)}$ for a.a. $x\in \Omega\setminus\Omega_1$, and by Theorem \ref{th:deltalpx} it is a contradiction because $\Phi$ can not satisfy condition $\Delta_2$.

  Case $2^0$. Let $p^+|_{\Omega_1} = \infty$. Define
\[
A_n = \{x\in \Omega_1: p(x) > n\}, \ \ \ n\in\N.
\]
By assumptions for every $n\in\N$, $\mu(A_n) > 0$, and for a.a. $x\in A_n$,
\[
r(x) > p(x) > n.
\]
Choose $n\in\N$ such that $2^n > c$. Then for $x\in A_n$, $t\ge 0$,
\begin{align*}
\Phi(x,2t) - c \Phi(x,t) &\ge 2^n t^{p(x)} + a(x) 2^n t^{r(x)} - c t^{p(x)} - c  a(x) t^{r(x)} \\
&= (2^n-c)(t^{p(x)} + a(x) t^{r(x)}) = (2^n -c) \Phi(x,t).
\end{align*}
Hence $h_c(x) =\infty$ for every $x\in A_n$ and $c < 2^n$. Therefore  $h_c$ is not integrable over $\Omega$, and by monotonicity of $h_c$ with respect to $c$, $h_c\notin L^1$ for every $c>0$. Consequently, $\Phi$ does not satisfy $\Delta_2$.\\

  Case $3^0$. Let $p^+ < \infty$ and $r^+|_{\Omega_1} = \infty$.
 We can assume that $c > 2^{p+}$. Thus 
\[
2^{p(x)} - c < 0, \ \ \ \ a.a.\  x\in\Omega.
\]
Let $x\in\Omega_1$. Then $a(x) > 0$ and $p(x) - r(x) < 0$ by assumptions. Therefore there exists  $T_x>2$ such that for all $t>T_x$,
\[
t^{p(x) - r(x)}  < a(x).
\]
For a.a. $y\in \Omega$, we have $\Phi(y,2t) - c\Phi(y,t) = t^{r(y)} [(2^{p(y)} -c) t^{p(y) - r(y)} + (2^{r(y)} - c) a(y)]$. Hence for a fixed $t>T_x$,
\begin{align*}
\Phi(x,2t) - c\Phi(x,t) &= t^{r(x)} [(2^{p(x)} - c) \,t^{p(x)-r(x)} + (2^{r(x)} - c) \, a(x) ]\\
&\ge t^{r(x)} [(2^{p(x)} - c) \,a(x) + (2^{r(x)} - c) \, a(x) ]\\
&= t^{r(x)} a(x) ( 2^{p(x)} + 2^{r(x)} - 2c ) > t^{r(x)} a(x) ( 2^{r(x)} - 2c).
\end{align*}
Let for $n\in\N$,
\[
B_n = \{ x \in \Omega_1: \ r(x) > n\}.
\]
Since $r^+|_{\Omega_1} = \infty$, there is $N$ such that for all $n > N$,  $\mu(B_n) > 0$ and $2^n > 2c$. Therefore for $t\ge 0$, $x\in B_n$ and $n> N$,
\[
 t^{r(x)} a(x) ( 2^{r(x)} - 2c) \ge  t^{r(x)} a(x) ( 2^n - 2c).
 \]
 Thus for any $ x\in B_n$ and $t_x>T_x$, 
 \[
\sup_{t\ge 0}\{\Phi(x,2t) - c\Phi(x,t)\} \ge  \sup_{t\ge 0} \, [t^{r(x)} a(x)\, ( 2^{r(x)} - 2c) ]\ge  t_x^{r(x)} a(x) \,( 2^n - 2c)   .
\]
Since $t_x$ can be taken arbitrary big it follows, for $n> N$ and for a.a. $x\in B_n $,
\[
h_c(t) \ge \sup_{t\ge 0} \, [t^{r(x)} a(x)\, ( 2^{r(x)} - 2c) ] = \infty. 
\]
Hence $h_c(x) =\infty$ for every $x\in B_n$. Therefore, if $c<2^n$, then $h_c$ is not integrable, but $n$ can be chosen arbitrary large, so for every $c>0$, $h_c\notin L^1$.

\end{proof}

In applications there are  often used double phase functionals  where the functions $p(x)$ and $r(x)$ are constants. Consequently in view of Theorem \ref{th:deltadobphase} we get the corollary.

\begin{corollary}  Let $\Phi$ be a double phase functional of the following form
\[
\Phi(x,t) = t^p + a(x) t^r, \ \ \ \   a.a. \ x\in \Omega, \ \  t\ge 0,
\]
where $1\le p \le  r <\infty$ and  $a(x) \ge 0$ for a.a. $x\in \Omega$. Then $\Phi$ satisfies $\Delta_2$, and if $p>1$ then $\Phi^*$ fulfills $\Delta_2$.
\end{corollary}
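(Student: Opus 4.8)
The plan is to deduce this directly from Theorem \ref{th:deltadobphase}, since a double phase functional with constant exponents is a special case of the general one. When $p(x)\equiv p$ and $r(x)\equiv r$ are constants with $1\le p\le r<\infty$, we automatically have $r^+=r<\infty$, so part (i) of Theorem \ref{th:deltadobphase} applies and gives that $\Phi$ satisfies $\Delta_2$. Likewise, the hypothesis $p>1$ means $p^-=p>1$, and then part (ii) of Theorem \ref{th:deltadobphase} yields that $\Phi^*$ satisfies $\Delta_2$. There is no need to invoke part (iii), and in particular no issue about whether $p(x)<r(x)$ on $\supp a$, because the strict-growth statement is not being claimed here.

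For completeness I would also include the one-line direct verifications, which are essentially the specializations of the computations in the proof of Theorem \ref{th:deltadobphase}. For the first assertion, using $p\le r$,
\[
\Phi(x,2t)=2^p t^p+2^r a(x)t^r\le 2^r\bigl(t^p+a(x)t^r\bigr)=2^r\Phi(x,t),
\]
for a.a.\ $x\in\Omega$ and all $t\ge0$, so $\Delta_2$ holds with $K=2^r$ and $h\equiv0$. For the second assertion, differentiating in $t$,
\[
\Phi'(x,2t)=p\,2^{p-1}t^{p-1}+a(x)r\,2^{r-1}t^{r-1}\ge 2^{p-1}\bigl(p\,t^{p-1}+a(x)r\,t^{r-1}\bigr)=2^{p-1}\Phi'(x,t),
\]
and since $p>1$ gives $2^{p-1}>1$, Lemma \ref{delta2compl} applies with $k=2^{p-1}$ and shows that $\Phi^*$ satisfies $\Delta_2$.

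There is really no obstacle here: the statement is a corollary precisely because all the work has already been done in Theorem \ref{th:deltadobphase} and Lemma \ref{delta2compl}. The only minor point to be careful about is to note that the constant-exponent case trivially satisfies $r^+<\infty$ (respectively $p^->1$), so the hypotheses of the cited parts are met without any additional regularity assumptions on $a$, $p$, or $r$.
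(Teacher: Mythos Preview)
Your proposal is correct and follows essentially the same approach as the paper, which simply states that the corollary follows from Theorem~\ref{th:deltadobphase} since constant exponents are a special case. Your additional direct verifications are fine but redundant, being exactly the specializations of the computations already given in the proof of Theorem~\ref{th:deltadobphase}.
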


  Let $\Omega = (\alpha, \beta)$, $-\infty < \alpha \le \beta < \infty$ with the Lebesgue measure $\mu$. 
By $L^1_{loc} = L^1_{loc} (\Omega)$ denote the set of all {\it locally integrable} functions on $\Omega$, that is all functions $f\in L^0$ such that $\int_K |f| \, d\mu<\infty$ for every compact set $K\subset \Omega$. By $C_c^\infty(\Omega)$ we denote the space of all smooth  complex valued functions on $\Omega$ with compact supports. By a smooth function we mean a function which has all derivatives.    Recall, a function $f\in L^1_{loc}$ is {\it weakly  differentable} if there exists a function $f'\in L^1_{loc}$ such that for every $u\in C_c^\infty(\Omega)$ we have
\[
\int\limits_{\alpha}^{\beta}f(x)u'(x)dx=-\int\limits_{\alpha}^{\beta}f'(x)u(x)dx.
\]
Then the function $f'$ is called the {\it  weak derivative} of $f$. If $f:\Omega\to \R$ is absolutely continuous on every compact subinterval of $\Omega$, then $f$ is weakly differentiable and $f'$ coincides with the classical derivative of $f$ a.e. \cite[Theorem 7.16]{Giovanni Leoni}.

Given a $MO$ function $\Phi$, the Musielak-Orlicz-Sobolev space  ($MOS$ space) $W^{1,\Phi} =W^{1,\Phi} (\Omega)$  consists of all $f\in L^1_{loc}(\Omega)$ such that their weak derivative $f'$ exists   and
\begin{equation}\label{eq:norm}
\|f\|_{1,\Phi} = \|f\|_\Phi + \|f'\|_\Phi < \infty.
\end{equation}
The space $W^{1,\Phi}$ is often called  a generalized Sobolev space, or if we know the context just a Sobolev space.  If $\Phi$ is an Orlicz function $\varphi$ then $W^{1,\varphi}$ is an Orlicz-Sobolev space. 
  In the case of variable exponent $MO$ function,  the variable exponent Sobolev space is denoted by  $W^{1,p(\cdot)}$.
 The space $W^{1,\Phi}$ equipped with the norm $\|\cdot\|_{1,\Phi}$ is a complete space \cite[Theorem 6. 1.4]{HH}.  In the case of $W^{1,p(\cdot)}$ see also (\cite[Theorem 6.6]{CUF}, \cite[Theorem 8.1.6]{DHHR}). 
 
   Anytime further we will use Sobolev spaces $W^{1,\Phi}$, they will always be defined on the finite interval $(\alpha, \beta)$ equipped with the Lebesgue measure.

\section{Integral Operators in $L^\Phi$}\label{seciton 4.2}

The integral operators in Orlicz spaces have been studied  among others in \cite{KR, PS}. Here we will study the integral operators in Musielak-Orlicz spaces. Let  $(\Omega,\Sigma, \mu )$ be a $\sigma$-finite measure space. For $MO$ functions $\Phi_1, \Phi_2$ define the function
\begin{equation}\label{eq:11}
\phi((x,y),t) = \Phi_2(x,\Phi^*_1(y,t)), \ \ \ a.a. \ (x,y) \in \Omega\times \Omega, \ t\ge0.
\end{equation} 
Note first that $\Phi_1^*$  can achieve infinite values and so it is $eMO$ function.  We will adopt the convention that if $\Phi_1^*(y,t) = \infty$ then $\phi((x,y),t) = \Phi_2(x,\infty) = \infty$. Therefore $\phi :  (\Omega\times \Omega) \times \R_+ \to [0,\infty]$ is a $eMO$. Denote by $L^\phi$ the Musielak-Orlicz space as a subspace of $L^0(\Omega\times \Omega)$. By $\|\cdot\|_\phi$ and $\|\cdot\|_\phi^0$ we mean the Luxemburg and Orlicz norm on $L^\phi$, respectively.

\begin{lemma} \label{lem:11} Let $\Phi_i, i=1,2$, be $MO$ functions on $\Omega$, where $\mu(\Omega) < \infty$. Assume $\int_\Omega \Phi_2(x,b) d\mu(x) < \infty$ for some $b>0$. Let $\psi: (\Omega\times \Omega) \times \R_+\to [0,\infty]$ be such that
\[
\psi((x,y),t) = \Phi_2(x, \Phi^*_1(y,t)), \ \ \ a.a. \ (x,y)\in \Omega\times\Omega, \ t\ge 0,
\]
and let $\phi = \psi^*$.
Then there exists $l>0$ such that  whenever $u\in L^{\Phi_1}$, $v\in L^{\Phi_2}$ and
\[
w(x,y) = u(y) v(x), \ \ \ a.a.\ \ x,y\in\Omega,
\]
then 
\[
\|w\|_\phi^0 \le l \,\|u\|_{\Phi_1} \|y\|_{\Phi^*_2}.
\]
\end{lemma}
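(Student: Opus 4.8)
The plan is to evaluate the Orlicz norm $\|w\|_\phi^0$ through its dual description, split the integral by two successive H\"older inequalities, and then bound the ``inner'' Luxemburg norm produced by the first of them by a constant independent of $u$, $v$ and the test function; the hypothesis $\int_\Omega\Phi_2(x,b)\,d\mu(x)<\infty$ enters exactly to guarantee that constant functions belong to $L^{\Phi_2}$. (Here and below we read the statement with $v\in L^{\Phi_2^*}$ and right-hand side $l\,\|u\|_{\Phi_1}\|v\|_{\Phi_2^*}$.) First note that $\psi$ is an $eMO$ function on $(\Omega\times\Omega)\times\R_+$: for a.a.\ $(x,y)$, $t\mapsto\Phi_2(x,\Phi_1^*(y,t))$ is a composition of increasing, convex, left-continuous functions vanishing at $0$, and joint measurability is clear (with the convention $\psi((x,y),t)=\infty$ when $\Phi_1^*(y,t)=\infty$). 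Hence $\phi^*=\psi^{**}=\psi$, and by the definition of the Orlicz norm
\[
\|w\|_\phi^0=\sup\Big\{\int_{\Omega\times\Omega} wg\,d(\mu\times\mu)\ :\ I_\psi(g)\le 1\Big\}.
\]

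Fix $g$ with $I_\psi(g)\le 1$ and set $\widetilde g(x)=\|g(x,\cdot)\|_{\Phi_1^*}$, which is measurable in $x$ by the standard argument (Tonelli's theorem and a countable infimum). Using Tonelli's theorem, then the H\"older inequality in $L^{\Phi_1}(\Omega)$ for each fixed $x$, then the H\"older inequality in $L^{\Phi_2}(\Omega)$ — together with $\|\cdot\|^0\le 2\|\cdot\|$ at each step — one gets
\[
\Big|\int_{\Omega\times\Omega} wg\,d(\mu\times\mu)\Big|
\le\int_\Omega |v(x)|\Big(\int_\Omega |g(x,y)|\,|u(y)|\,d\mu(y)\Big)d\mu(x)
\le 4\,\|u\|_{\Phi_1}\,\|v\|_{\Phi_2^*}\,\|\widetilde g\|_{\Phi_2}.
\]
So it suffices to bound $\|\widetilde g\|_{\Phi_2}$ by a constant depending only on $\Phi_1$, $\Phi_2$ and $\mu(\Omega)$; the lemma then follows with $l$ equal to $4$ times that constant.

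For the key estimate I would use the elementary bound $\|h\|_\Psi\le 1+I_\Psi(h)$, valid for any $MO$ function $\Psi$ (immediate if $I_\Psi(h)\le1$; and if $c:=I_\Psi(h)>1$ then convexity gives $I_\Psi(h/c)\le I_\Psi(h)/c=1$, so $\|h\|_\Psi\le c$). Applied to $\Phi_1^*$ in the variable $y$, for a.a.\ $x$,
\[
\widetilde g(x)\le 1+\int_\Omega\Phi_1^*\big(y,|g(x,y)|\big)\,d\mu(y)=:1+G(x),
\]
whence $\|\widetilde g\|_{\Phi_2}\le\|1\|_{\Phi_2}+\|G\|_{\Phi_2}$. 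The hypothesis $\int_\Omega\Phi_2(x,b)\,d\mu(x)<\infty$ together with dominated convergence yields $\int_\Omega\Phi_2(x,1/\lambda)\,d\mu(x)\to0$ as $\lambda\to\infty$, so $1\in L^{\Phi_2}$ and $c_0:=\|1\|_{\Phi_2}<\infty$. For $\|G\|_{\Phi_2}$, put $m=\mu(\Omega)$ and apply Jensen's inequality for the convex function $\Phi_2(x,\cdot)$ with the probability measure $\mu/m$: for a.a.\ $x$,
\[
\Phi_2\!\Big(x,\tfrac1m\,G(x)\Big)\le\tfrac1m\int_\Omega\Phi_2\big(x,\Phi_1^*(y,|g(x,y)|)\big)\,d\mu(y);
\]
integrating in $x$ and using $I_\psi(g)\le1$ gives $I_{\Phi_2}(G/m)\le 1/m$. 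If $m\ge1$ this already yields $\|G\|_{\Phi_2}\le m$; if $m<1$ then $I_{\Phi_2}(G)=I_{\Phi_2}(m\cdot G/m)\le m\,I_{\Phi_2}(G/m)\le 1$, so $\|G\|_{\Phi_2}\le1$. In all cases $\|G\|_{\Phi_2}\le\max\{1,\mu(\Omega)\}$, hence $\|\widetilde g\|_{\Phi_2}\le c_0+\max\{1,\mu(\Omega)\}$ and $l=4\big(c_0+\max\{1,\mu(\Omega)\}\big)$ works (which in particular gives $w\in L^\phi$).

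I expect the delicate point to be the Jensen step and the bookkeeping that converts the normalized modular inequality $I_{\Phi_2}(G/m)\le1/m$ back into a bound on $\|G\|_{\Phi_2}$ when $\mu(\Omega)<1$. The remaining ingredients — the identity $\phi^*=\psi$, the two H\"older inequalities, the measurability of $x\mapsto\|g(x,\cdot)\|_{\Phi_1^*}$, and $\|h\|_\Psi\le1+I_\Psi(h)$ — are routine, the only real care being the extended-valued nature of $\Phi_1^*$ and $\psi$; in particular one should observe that $I_\psi(g)\le1$ forces $G(x)<\infty$ for a.a.\ $x$ (e.g.\ from the integrated Jensen bound), so the sections $g(x,\cdot)$ indeed lie in $L^{\Phi_1^*}$ and the inner H\"older inequality is justified.
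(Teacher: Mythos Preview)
Your proof is correct and follows a genuinely different route from the paper's. The paper never introduces the sectional norm $\widetilde g(x)=\|g(x,\cdot)\|_{\Phi_1^*}$; instead it works directly with the modular by applying Young's inequality for the pair $(\Phi_1,\Phi_1^*)$ to $|g(x,y)|\cdot\frac{|u(y)|}{\|u\|_{\Phi_1}}$ against the \emph{weighted} measure $\frac{|v(x)|}{\|v\|_{\Phi_2^*}}\,d\mu(x)$, splitting the result into a term containing $\Phi_1^*(y,|g(x,y)|)$ and a term containing $\Phi_1(y,|u(y)|/\|u\|_{\Phi_1})$. A second Young's inequality (for $\Phi_2,\Phi_2^*$) on the first term converts it to $I_\psi(g)$, while the second term collapses via $I_{\Phi_1}(u/\|u\|_{\Phi_1})\le1$ and one more Young step; this yields the explicit constant $l=b+b\mu(\Omega)+1+\int_\Omega\Phi_2(x,b)\,d\mu(x)$. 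Your argument is the conceptually natural ``iterated H\"older'' scheme: two H\"older inequalities reduce everything to $\|\widetilde g\|_{\Phi_2}$, and then the estimate $\|h\|_\Psi\le 1+I_\Psi(h)$ together with Jensen (with probability measure $\mu/m$) delivers a uniform bound. The paper's method is slightly more elementary---only Young's inequality, no Jensen, no measurability issues for $\widetilde g$---and produces a constant tied directly to the hypothesis on $b$; your method makes the structural reason for the bound (the modular constraint $I_\psi(g)\le1$ controls the $\Phi_2$-norm of the $\Phi_1^*$-sections of $g$) much more transparent, at the price of the mild bookkeeping you flagged when $\mu(\Omega)<1$ and the side verification that $\widetilde g$ is measurable.
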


\begin{proof}

 By Young's inequality applied to $\Phi_1$ and $\Phi_1^*$ and the measure $\frac{|v(x)|}{\|v\|_{\Phi_2^*}} d\mu(x)$, for $b>0$ from the assumptions we get
\begin{align*}
&\left|\int_\Omega\int_\Omega bw(x,y) g(x,y)\,d\mu(x)d\mu(y)\right|
\le \|u\|_{\Phi_1}\|v\|_{\Phi_2^*}\, b \int_\Omega \int_\Omega |g(x,y)| \frac{|u(y)||v(x)|}{\|u\|_{\Phi_1}\|v\|_{\Phi_2^*}}\,d\mu(x)d\mu(y)\\
&\le \|u\|_{\Phi_1}\|v\|_{\Phi_2^*}\, b\int_\Omega \left[\int_\Omega \Phi_1^*(y,|g(x,y)|) \frac{|v(x)|}{\|v\|_{\Phi_2^*} }\,d\mu(x) + \int_\Omega \Phi_1\left(y, \frac{|u(y)|}{\|u\|_{\Phi_1}}\right) \frac{|v(x)|}{\|v\|_{\Phi_2^*}}\,d\mu(x)\right]\,d\mu(y)\\
& = \|u\|_{\Phi_1}\|v\|_{\Phi_2^*}\,(A + B).
\end{align*}
Applying now  to term $A$, Young's inequality to $\Phi_2$ and $\Phi_2^*$ and the obvious fact   $I_{\Phi_2^*} (v/\|v\|_{\Phi_2^*} )\le 1$, we get
\begin{align*}
A &= b\int_\Omega \left[\int_\Omega \Phi_1^*(y,|g(x,y)|) \frac{|v(x)|}{\|v\|_{\Phi_2^*} }\,d\mu(x)\right]\, d\mu(y)\\
&\le b \int_\Omega \left[\int_\Omega \Phi_2(x, \Phi_1^*(y,|g(x,y)|))\, d\mu(x) + \int_\Omega \Phi_2^*\left(x, \frac{|v(x)|}{\|v\|_{\Phi_2^*}}\right)\,d\mu(x)\right]\,d\mu(y)\\
&\le b\int_\Omega\int_\Omega \psi((x,y), |g(x,y)|) \,d\mu(x)d\mu(y) + b\mu(\Omega).
\end{align*}
 By  Fubini's theorem and $I_{\Phi_1} (u/\|u\|_{\Phi_1} )\le 1$, and Young's inequality applied  to $\Phi_2$,
\begin{align*}
B &= \int_\Omega \left[\int_\Omega \Phi_1\left(y, \frac{|u(y)|}{\|u\|_{\Phi_1}}\right)\,d\mu(y)\right]\, b \frac{|v(x)|}{\|v\|_{\Phi_2^*}}\,d\mu(x)\le \int_\Omega b \frac{|v(x)|}{\|v\|_{\Phi_2^*}}\,d\mu(x)\\
&\le \int_\Omega \Phi_2^*\left(x, \frac{|v(x)|}{\|v\|_{\Phi_2^*}}\right) \,d\mu(x)+ \int_\Omega \Phi_2(x,b)\,d\mu(x) \le  1 + \int_\Omega \Phi_2(x,b)\,d\mu(x).
\end{align*}

By the above,
\begin{align*}
&\left|\int_\Omega\int_\Omega bw(x,y) g(x,y)\,d\mu(x)d\mu(y)\right|\le \|u\|_{\Phi_1}\|v\|_{\Phi_2^*}\,(A + B)\\
&\le  \|u\|_{\Phi_1}\|v\|_{\Phi_2^*} \left( b\int_\Omega\int_\Omega \psi((x,y), |g(x,y)|) \,d\mu(x)d\mu(y) + b\mu(\Omega) + 1 + \int_\Omega \Phi_2(x,b)\,d\mu(x)\right)\\\
&= \|u\|_{\Phi_1}\|v\|_{\Phi_2^*} \left(b I_ \psi (g) + b\mu(\Omega)+ 1 + \int_\Omega \Phi_2(x,b)\,d\mu(x)\right).
\end{align*}

Finally,
\[
\|w\|_\phi^0 = \sup_{I_\psi(g)\le 1}\left|\int_\Omega\int_\Omega w(x,y) g(x,y)\,d\mu(x)d\mu(y)\right| \le l\,\|u\|_{\Phi_1} \|v\|_{\Phi_2^*},
\]
where $l= b + b\mu(\Omega) + 1+ \int_\Omega \Phi_2(x,b)\,d\mu(x)<\infty$ by assumption.

\end{proof}

\begin{theorem}\label{th:11}
Let $\Phi_i$, $i=1,2$, be $MO$ functions on $\Omega$. Assume $\phi:(\Omega\times\Omega)\times \R_+\to [0,\infty]$ is a $eMO$ function and there exists $l>0$ such that  if $u\in L^{\Phi_1}$, $v\in L^{\Phi_2}$ and $w(x,y) = u(y) v(x)\in L^\phi$ then 
\[
\|w\|_\phi^0 \le l \|u\|_{\Phi_1} \|v\|_{\Phi_2^*}.
\]
 Let $A$ be an integral operator for $u\in L^0$,
\[
Au(x) = \int_\Omega k(x,y) u(y)\,d\mu(y), \ \ \ a.a. \ x\in\Omega,
\]
where the kernel $k(x,y)\in L^{\phi^*}$. Then $A: L^{\Phi_1}\to L^{\Phi_2}$ is bounded.
\end{theorem}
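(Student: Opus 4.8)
The plan is to estimate the quantity $|\int_\Omega (Au)(x)\,g(x)\,d\mu(x)|$ uniformly over $g$ with $\|g\|_{\Phi_2^*}\le 1$, and then conclude by the duality formula for the Orlicz norm, namely $\|Au\|_{\Phi_2} = \sup\{|\int_\Omega (Au)\,g\,d\mu| : I_{\Phi_2^*}(g)\le 1\}$ combined with the equivalence $\|\cdot\|_{\Phi_2}\le\|\cdot\|_{\Phi_2}^0\le 2\|\cdot\|_{\Phi_2}$ and Theorem \ref{th:funcMO}(i). So fix $u\in L^{\Phi_1}$ and $g\in L^{\Phi_2^*}$. By Fubini's theorem,
\[
\int_\Omega (Au)(x)\, g(x)\, d\mu(x) = \int_\Omega\int_\Omega k(x,y)\, u(y)\, g(x)\, d\mu(y)\, d\mu(x)
= \int_{\Omega\times\Omega} k(x,y)\, w(x,y)\, d\mu(x)d\mu(y),
\]
where $w(x,y) = u(y)g(x)$. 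One must of course check that the double integral converges absolutely, so that Fubini applies; this follows from the H\"older inequality on $L^\phi\times L^{\phi^*}$ once we know $w\in L^\phi$, which is exactly the hypothesis.

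The next step is the H\"older inequality on the product space $L^{\phi}(\Omega\times\Omega)$ paired with $L^{\phi^*}(\Omega\times\Omega)$: since $k\in L^{\phi^*}$ and $w\in L^\phi$,
\[
\left|\int_{\Omega\times\Omega} k\, w\, d\mu\, d\mu\right| \le \|k\|_{\phi^*}\, \|w\|_\phi^0 .
\]
Now invoke the hypothesis on $\phi$: with $v = g$ playing the role of the second factor (note $g\in L^{\Phi_2^*}$, which is the space called $L^{\Phi_2}$ with $\Phi_2$ replaced by $\Phi_2^*$ — one should be mildly careful that the hypothesis is stated for $v\in L^{\Phi_2}$ and $\|v\|_{\Phi_2^*}$, so here we apply it with the factor in $L^{\Phi_2^*}$ and the corresponding norm), we obtain
\[
\|w\|_\phi^0 \le l\, \|u\|_{\Phi_1}\, \|g\|_{\Phi_2^*} \le l\, \|u\|_{\Phi_1}
\]
whenever $\|g\|_{\Phi_2^*}\le 1$. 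Combining,
\[
\left|\int_\Omega (Au)(x)\, g(x)\, d\mu(x)\right| \le l\, \|k\|_{\phi^*}\, \|u\|_{\Phi_1}
\]
for every $g$ with $\|g\|_{\Phi_2^*}\le 1$, and a fortiori for every $g$ with $I_{\Phi_2^*}(g)\le 1$.

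Finally, taking the supremum over such $g$ gives $\|Au\|_{\Phi_2}^0 \le l\, \|k\|_{\phi^*}\, \|u\|_{\Phi_1}$, hence $\|Au\|_{\Phi_2}\le l\,\|k\|_{\phi^*}\,\|u\|_{\Phi_1}$, so $A$ is bounded with $\|A\|\le l\,\|k\|_{\phi^*}$. The main obstacle is bookkeeping rather than depth: one must make sure that $w = u(\cdot_2)g(\cdot_1)$ genuinely lies in $L^\phi$ so that both Fubini and the H\"older inequality are legitimate (this is where the structural hypothesis $\phi$ being $eMO$ and the product-norm bound are used), and one must handle the interplay between the Luxemburg and Orlicz norms and between $\Phi_2$ and $\Phi_2^*$ correctly when specializing the hypothesis. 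No delicate estimate beyond the two H\"older inequalities is needed.
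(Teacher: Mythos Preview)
Your argument is correct and is essentially identical to the paper's proof: both write $\int_\Omega (Au)\,g\,d\mu$ as the double integral $\int_{\Omega\times\Omega} k\,w$, apply H\"older on the product space to bound by $\|k\|_{\phi^*}\|w\|_\phi^0$, invoke the hypothesis $\|w\|_\phi^0\le l\|u\|_{\Phi_1}\|g\|_{\Phi_2^*}$, and take the supremum over $I_{\Phi_2^*}(g)\le 1$ to obtain $\|Au\|_{\Phi_2}^0\le l\|k\|_{\phi^*}\|u\|_{\Phi_1}$. Your caution about the mismatch ``$v\in L^{\Phi_2}$'' versus ``$\|v\|_{\Phi_2^*}$'' in the statement is justified; the paper's own proof silently uses $v\in L^{\Phi_2^*}$, confirming that this is a typo in the hypothesis.
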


\begin{proof}
By H\"older's inequality, for $u\in L^{\Phi_1}$, $v\in L^{\Phi_2^*}$,
\begin{align*}
\left|\int_\Omega Au(x) v(x)\,d\mu(x)\right| &= \left|\int_\Omega\int_\Omega k(x,y) u(y)v(x)\,d\mu(x)d\mu(y)\right|\\ 
&= \left|\int_\Omega\int_\Omega k(x,y) w(x,y)\,d\mu(x)d\mu(y)\right|\\ 
&\le \|k\|_{\phi^*} \|w\|_\phi^0 \le l \|k\|_{\phi^*} \|u\|_{\Phi_1} \|v\|_{\Phi_2^*}.
\end{align*}
Recall that $I_{\Phi_2^*}(v) \le 1$ if and only if $\|v\|_{\Phi_2^*} \le 1$. Consequently,
\[
\|Au\|^0_{\Phi_2} = \sup_{I_{\Phi_2^*}(v) \le 1 }\left|\int_\Omega Au(x)v(x)\,d\mu(x)\right| \le  l \|k\|_{\phi^*} \|u\|_{\Phi_1},
\]
and thus $A$ is bounded from $L^{\Phi_1}$ to $L^{\Phi_2}$. 

\end{proof} 

\begin{corollary}\label{cor:11}
Let $\Phi_i, \, i=1,2$, be $MO$ functions on $\Omega$ and $\mu(\Omega)<\infty$. Assume there exists $b>0$ such that $\int_\Omega \Phi_2(x,b)\,d\mu(x) < \infty$. Let $\psi: (\Omega\times \Omega) \times \R_+\to [0,\infty]$ be such that
\[
\psi((x,y),t) = \Phi_2(x, \Phi^*_1(y,t)), \ \ \ \ \ a.a. \ x,y\in\Omega, \ \  t\ge 0.
\]
If $k(x,y)\in L^\psi$, then the operator $Au(x) = \int_\Omega k(x,y) u(y)\,d\mu(y)$ is bounded from $L^{\Phi_1}$ to $L^{\Phi_2}$.
\end{corollary}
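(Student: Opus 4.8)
The plan is to obtain the corollary as an immediate composition of Lemma \ref{lem:11} and Theorem \ref{th:11}, the only bookkeeping being the passage from $\psi$ to its complementary function. First I would set $\phi = \psi^*$. The function $\psi$ here is exactly the function $\phi$ introduced in (\ref{eq:11}) (with the roles of $\Phi_1,\Phi_2$ as in the corollary), and it was already recorded there that this is an $eMO$ function; hence its complement $\phi = \psi^*$ is $eMO$ as well, and by the biduality $\Phi^{**}=\Phi$ valid for $eMO$ functions we have $\phi^* = \psi^{**} = \psi$. In particular the assumption $k\in L^\psi$ is literally the hypothesis $k\in L^{\phi^*}$ demanded of the kernel in Theorem \ref{th:11}.

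Next I would apply Lemma \ref{lem:11} to this $\psi$. Its two standing hypotheses --- $\mu(\Omega)<\infty$ and the existence of $b>0$ with $\int_\Omega \Phi_2(x,b)\,d\mu(x)<\infty$ --- are precisely the assumptions made in the corollary, so the lemma supplies a constant $l>0$ such that for every $u\in L^{\Phi_1}$ and $v\in L^{\Phi_2^*}$ the product $w(x,y)=u(y)v(x)$ satisfies $\|w\|_\phi^0 \le l\,\|u\|_{\Phi_1}\|v\|_{\Phi_2^*}$. This is exactly the structural condition on $\phi$ required as a hypothesis of Theorem \ref{th:11}.

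With the $eMO$ function $\phi$, the constant $l$, and the kernel $k\in L^{\phi^*}$ all assembled, Theorem \ref{th:11} applies directly and gives that $A:L^{\Phi_1}\to L^{\Phi_2}$ is bounded, which is the claim. I do not expect any real obstacle here, since the corollary is just the concatenation of the two previous results; the single point that deserves a line of justification is the identity $\phi^*=\psi$, i.e.\ that $\psi$ really is an $eMO$ function so that $\psi^{**}=\psi$ --- and this was already observed in the discussion preceding Lemma \ref{lem:11}.
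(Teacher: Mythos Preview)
Your proposal is correct and follows essentially the same route as the paper: set $\phi=\psi^*$, invoke Lemma~\ref{lem:11} to obtain the constant $l$ and the product estimate $\|w\|_\phi^0\le l\,\|u\|_{\Phi_1}\|v\|_{\Phi_2^*}$, and then apply Theorem~\ref{th:11} with $k\in L^{\phi^*}=L^\psi$. The only addition you make is spelling out the biduality $\psi^{**}=\psi$ to identify $L^{\phi^*}$ with $L^\psi$, which the paper leaves implicit.
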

 
 \begin{proof} By Lemma \ref{lem:11}, there is $l>0$ such that for any $u\in L^{\Phi_1}$, $v\in L^{\Phi^*_2}$, and $w(x,y) = u(y) v(x)$ we have $\|w\|^0_\phi \le l \|u\|_{\Phi_1} \|v\|_{\Phi_2^*}$, where $\phi = \psi^*$. Consequently, the assumptions of Theorem \ref{th:11} are satisfied and $A$ is bounded.

\end{proof}

The next result  follows immediately from Corollary \ref{cor:11}.

\begin{corollary}\label{cor:12}
Let $\Phi$ be a $MO$ function on $\Omega$ with $\mu(\Omega)<\infty$ and such that $\int_\Omega \Phi(x,b)\,d\mu(x) < \infty$ for some $b>0$. Let 
\[
\psi((x,y),t) = \Phi(x,\Phi^*(y,t)), \ \ \ x,y\in\Omega,\ t\ge 0.
\]
If $k(x,y) \in L^\psi$ then $Au(x) = \int_\Omega k(x,y) u(y)\,d\mu(y)$ is bounded from $L^{\Phi}$ to $L^{\Phi}$.
\end{corollary}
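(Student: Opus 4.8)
The plan is to obtain this statement as the diagonal special case of Corollary~\ref{cor:11}. First I would put $\Phi_1 = \Phi_2 = \Phi$ and check that every hypothesis of Corollary~\ref{cor:11} is in force: the measure satisfies $\mu(\Omega) < \infty$ by assumption; the requirement that $\int_\Omega \Phi_2(x,b)\,d\mu(x) < \infty$ for some $b>0$ is exactly the present hypothesis $\int_\Omega \Phi(x,b)\,d\mu(x) < \infty$; the function
\[
\psi((x,y),t) = \Phi(x,\Phi^*(y,t))
\]
is precisely $\psi((x,y),t) = \Phi_2(x,\Phi_1^*(y,t))$ specialized to $\Phi_1 = \Phi_2 = \Phi$; and $k(x,y) \in L^\psi$ is assumed. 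Corollary~\ref{cor:11} then yields directly that $Au(x) = \int_\Omega k(x,y)u(y)\,d\mu(y)$ is bounded from $L^{\Phi_1} = L^\Phi$ into $L^{\Phi_2} = L^\Phi$, which is the assertion.

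There is no genuine obstacle here, since all the analytic work sits in Lemma~\ref{lem:11} and Theorem~\ref{th:11}, already packaged in Corollary~\ref{cor:11}; the argument is a one-line specialization. The only point I would take care to mention is that the composed function $\psi$ is a legitimate $eMO$ function on $(\Omega\times\Omega)\times\R_+$ even though $\Phi^*$ may be extended-valued; this is handled exactly as in the discussion following~\eqref{eq:11}, with the convention $\Phi(x,\infty)=\infty$, so that $L^\psi \subset L^0(\Omega\times\Omega)$ and its Luxemburg norm are well defined and the hypothesis $k \in L^\psi$ is meaningful. With that remark in place the proof reduces to invoking Corollary~\ref{cor:11}.
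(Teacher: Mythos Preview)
Your proposal is correct and matches the paper's approach exactly: the paper simply states that the result follows immediately from Corollary~\ref{cor:11}, which is precisely the specialization $\Phi_1=\Phi_2=\Phi$ you carry out. Your extra remark about $\psi$ being a well-defined $eMO$ function is a helpful clarification but not required beyond what the paper already noted after~\eqref{eq:11}.
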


Now we wish to formulate a condition for the Voltera operator to be bounded from $L^\Phi(\alpha, \beta)$ to itself for $-\infty < \alpha < \beta < \infty$. It requires some preparations.

\begin{lemma}\label{lem:111}
Given a $MO$ function $\Phi$ on $\Omega$, we have that $\essinf_{x\in\Omega}\Phi(x,a) >0$ for some $a>0$ if and only if there exists $c>0$ with $\esssup_{x\in\Omega} \Phi^*(x,c) <\infty$.
\end{lemma}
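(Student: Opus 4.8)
The statement relates a pointwise lower bound on $\Phi$ to a pointwise upper bound on $\Phi^*$, and the natural tool is the elementary inequality (\ref{eq:inverseineq}) together with the definition of the generalized inverse. The plan is to pass through the generalized inverses: the condition $\essinf_{x}\Phi(x,a)>0$ for some $a>0$ is equivalent to a uniform (in $x$) upper bound on $\Phi^{-1}(x,\cdot)$ near a fixed value, and dually the condition $\esssup_{x}\Phi^*(x,c)<\infty$ for some $c>0$ is equivalent to a uniform lower bound on $(\Phi^*)^{-1}(x,\cdot)$. The two are then linked by $t\le(\Phi^*)^{-1}(x,t)\,\Phi^{-1}(x,t)\le 2t$.

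For the forward direction, suppose $\essinf_{x}\Phi(x,a)=\delta>0$ for some $a>0$. Then for a.a. $x$ and for every $t\le\delta$ we have $\Phi^{-1}(x,t)=\inf\{u:\Phi(x,u)\ge t\}\le a$, since $\Phi(x,a)\ge\delta\ge t$. Plugging $t=\delta$ into the left inequality of (\ref{eq:inverseineq}) gives $(\Phi^*)^{-1}(x,\delta)\cdot a\ge(\Phi^*)^{-1}(x,\delta)\,\Phi^{-1}(x,\delta)\ge\delta$, so $(\Phi^*)^{-1}(x,\delta)\ge\delta/a>0$ for a.a. $x$. I would then want to convert this uniform lower bound on $(\Phi^*)^{-1}(x,\delta)$ into a uniform upper bound on $\Phi^*(x,c)$ for a suitable $c$: by definition of the generalized inverse, $(\Phi^*)^{-1}(x,\delta)\ge\delta/a$ means that $\Phi^*(x,u)<\delta$ for all $u<\delta/a$, hence (using left continuity of $\Phi^*(x,\cdot)$) $\Phi^*(x,c)\le\delta<\infty$ for any fixed $c<\delta/a$. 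This gives the desired $c$.

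The converse is symmetric, using $\Phi^{**}=\Phi$. If $\esssup_{x}\Phi^*(x,c)=M<\infty$ for some $c>0$, then for a.a. $x$, $(\Phi^*)^{-1}(x,t)\le c$ whenever $t>M$ (indeed $t>M\ge\Phi^*(x,c)$ forces the infimum defining $(\Phi^*)^{-1}(x,t)$ to be $\le c$); taking any fixed $t_0>M$ and applying the left inequality of (\ref{eq:inverseineq}) to $\Phi^*$ in place of $\Phi$ (which is legitimate since $(\Phi^*)^*=\Phi$), we get $\Phi^{-1}(x,t_0)\cdot c\ge(\Phi^*)^{-1}(x,t_0)\,\Phi^{-1}(x,t_0)\ge t_0$, so $\Phi^{-1}(x,t_0)\ge t_0/c>0$ for a.a. $x$. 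As before this translates to $\Phi(x,a)\ge t_0>0$ for any fixed $a<t_0/c$, so $\essinf_{x}\Phi(x,a)\ge t_0>0$.

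The main obstacle I anticipate is bookkeeping around the generalized inverse and the extended-real-valued nature of $\Phi^*$: one must be careful that $\Phi^{-1}(x,t)$ can be $0$ (if $\Phi(x,u)\ge t$ for arbitrarily small $u$) or the supremum of the finite-value range, and that the chain $(\Phi^*)^{-1}(x,t)\Phi^{-1}(x,t)\ge t$ in (\ref{eq:inverseineq}) is stated ``for a.a. $x$, $t\ge0$'', so it applies pointwise at the particular values $\delta$ and $t_0$ I choose. One also has to justify that ``$(\Phi^*)^{-1}(x,\delta)\ge\delta/a$'' genuinely yields a finite upper bound for $\Phi^*$ at some $c>0$, which is where left continuity of the Orlicz function with extended values is used; the slight loss from ``$<$'' to ``$\le$'' is harmless since we only need existence of some working $c$ (resp. $a$).
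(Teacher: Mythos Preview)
Your forward direction is correct and essentially identical to the paper's argument: from $\Phi(x,a)\ge\delta$ deduce $\Phi^{-1}(x,\delta)\le a$, feed this into the left half of (\ref{eq:inverseineq}) to get $(\Phi^*)^{-1}(x,\delta)\ge\delta/a$, and read off a uniform bound on $\Phi^*$.

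The converse, however, contains a genuine error. From $\Phi^*(x,c)\le M<t$ you claim $(\Phi^*)^{-1}(x,t)\le c$, but the inequality goes the other way: since $\Phi^*(x,c)<t$, the point $c$ does \emph{not} belong to $\{u:\Phi^*(x,u)\ge t\}$, and by monotonicity neither does any $u<c$, so the infimum is $\ge c$, not $\le c$. With the correct direction $(\Phi^*)^{-1}(x,t_0)\ge c$, the left half of (\ref{eq:inverseineq}) yields no useful bound on $\Phi^{-1}(x,t_0)$. There is a second sign error downstream: even if one had $\Phi^{-1}(x,t_0)\ge t_0/c$, this says exactly that $\Phi(x,a)<t_0$ for $a<t_0/c$, the opposite of what you assert.

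The repair is to use the \emph{right} half of (\ref{eq:inverseineq}) in the converse: from $(\Phi^*)^{-1}(x,t_0)\ge c$ one obtains $c\,\Phi^{-1}(x,t_0)\le(\Phi^*)^{-1}(x,t_0)\,\Phi^{-1}(x,t_0)\le 2t_0$, hence $\Phi^{-1}(x,t_0)\le 2t_0/c$, and therefore $\Phi(x,a)\ge t_0$ for any fixed $a>2t_0/c$. (An even shorter route is Young's inequality directly: $ac\le\Phi(x,a)+\Phi^*(x,c)\le\Phi(x,a)+M$, so $\Phi(x,a)\ge ac-M>0$ once $a>M/c$.) The paper only writes that the converse ``goes in a similar way''; it is this use of the other half of (\ref{eq:inverseineq}) that makes the symmetry work.
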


\begin{proof}
If   $\essinf_{x\in\Omega}\Phi(x,a) >0$ for some $a>0$, then $\Phi(x,a) \ge M >0$ for a.a. $x\in \Omega$ and some $M$. Hence $a \ge \Phi^{-1}(x,M)$. Thus in view of (\ref{eq:inverseineq}),
$(\Phi^*)^{-1}(x,M) \ge \frac{M}{\Phi^{-1}(x,M)}\ge \frac{M}{a}$. Therefore 
$\infty >M \ge \esssup_{x\in\Omega}\Phi^*(x, c)$ with $c=\frac{M}{a}$. In the opposite direction the proof goes in a similar way.

\end{proof}

\begin{definition}\label{V condtion}
Let $\Omega= (\alpha, \beta)$, $-\infty< \alpha < \beta < \infty$,  equipped with the Lebesgue measure. We say that $MO$ function $\Phi$ satisfies condition {\rm (V)} if for some $a,b>0$,
\begin{equation}\tag{V}
 \int_\alpha^\beta \Phi(x,b)\,dx < \infty\ \ \text{and}\ \  \ \essinf_{x\in\Omega} \Phi(x,a) >0.
 \end{equation} 
\end{definition}

\begin{theorem}\label{th:12} Let $\Omega= (\alpha, \beta)$, $-\infty< \alpha < \beta < \infty$,  equipped with the Lebesgue measure.   Assume   $\Phi$ is a $MO$ function on $\Omega$ satisfying the condition {\rm (V)}.
Then the Voltera operator

\[
Au(x) = \int_\alpha^x u(y)\, dy, \ \ \ \ x\in (\alpha,\beta),
\]
 is bounded on $L^\Phi$. 
\end{theorem}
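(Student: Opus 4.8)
The plan is to realize the Volterra operator as an integral operator with kernel $k(x,y) = \chi_{\{y < x\}}$ on $\Omega = (\alpha,\beta)$ and then invoke Corollary \ref{cor:12}. Concretely, $Au(x) = \int_\alpha^\beta k(x,y) u(y)\, dy$ with $k(x,y) = \chi_{\{(x,y)\in\Omega\times\Omega:\ \alpha < y < x\}}$, which is bounded and measurable on $\Omega\times\Omega$. By Corollary \ref{cor:12}, since condition (V) gives $\int_\alpha^\beta \Phi(x,b)\,dx < \infty$ for some $b > 0$, it suffices to show that $k \in L^\psi$, where $\psi((x,y),t) = \Phi(x,\Phi^*(y,t))$. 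Because $k$ takes only the values $0$ and $1$, we have $I_\psi(\lambda k) = \int\int_{\alpha < y < x} \Phi(x,\Phi^*(y,\lambda))\, dx\, dy$, so the whole problem reduces to exhibiting a single $\lambda > 0$ for which this double integral is finite.

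Here is where the two halves of condition (V) must be combined. First I would use the second half, $\essinf_{x\in\Omega} \Phi(x,a) > 0$ for some $a>0$: by Lemma \ref{lem:111} this is equivalent to $\esssup_{y\in\Omega} \Phi^*(y,c) < \infty$ for some $c > 0$, say $\Phi^*(y,c) \le C$ for a.a. $y\in\Omega$. Choosing $\lambda = c$, monotonicity of $\Phi^*(y,\cdot)$ and then of $\Phi(x,\cdot)$ gives, for a.a. $x,y$,
\[
\psi((x,y),c) = \Phi(x,\Phi^*(y,c)) \le \Phi(x,C).
\]
Then I would compare $\Phi(x,C)$ against $\Phi(x,b)$ using the first half of (V). If $C \le b$ this is immediate by monotonicity; otherwise pick $N\in\N$ with $2^N b \ge C$ and iterate convexity in the form $\Phi(x,2t) \le \ldots$ — more cleanly, convexity and $\Phi(x,0)=0$ give $\Phi(x, sb) \le s\,\Phi(x,b)$ for $s \ge 1$ is false in general, so instead I would simply write $\Phi(x,C) \le \frac{C}{b}\,\Phi(x,b)$ when $C \ge b$, which does follow from convexity together with $\Phi(x,0)=0$ (the chord from $0$ lies below the graph at $C/b \ge 1$ times the step). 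Hence
\[
I_\psi(c\,k) = \int_\alpha^\beta\!\int_\alpha^x \Phi(x,\Phi^*(y,c))\,dy\,dx \le \int_\alpha^\beta (\beta-\alpha)\,\Phi(x,C)\,dx \le (\beta-\alpha)\tfrac{C}{b}\int_\alpha^\beta \Phi(x,b)\,dx < \infty,
\]
so $k\in L^\psi$.

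With $k \in L^\psi$ established, Corollary \ref{cor:12} (whose hypotheses $\mu(\Omega) = \beta - \alpha < \infty$ and $\int_\Omega \Phi(x,b)\,d\mu < \infty$ are exactly supplied by (V)) yields directly that $A\colon L^\Phi \to L^\Phi$ is bounded, completing the proof. I expect the only genuinely delicate point to be the reduction step: making sure the kernel $k$ lands in $L^\psi$ rather than merely $L^{\psi^*}$ or some other space, and correctly tracking that $\psi$ is built from $\Phi^*$ in the inner slot so that the boundedness $\essinf \Phi(x,a) > 0$ — equivalently $\esssup \Phi^*(y,c) < \infty$ via Lemma \ref{lem:111} — is precisely what tames the inner composition. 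The convexity comparison $\Phi(x,C) \le \frac{C}{b}\Phi(x,b)$ and the appeal to Corollary \ref{cor:12} are then routine.
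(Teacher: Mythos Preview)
Your overall strategy---writing $A$ as an integral operator with kernel $k=\chi_{\{y<x\}}$, using Lemma~\ref{lem:111} to bound $\Phi^*(y,c)$, and then invoking Corollary~\ref{cor:12}---is exactly the paper's approach. However, your execution contains a genuine error at the comparison step. You claim that convexity together with $\Phi(x,0)=0$ gives $\Phi(x,C)\le \tfrac{C}{b}\Phi(x,b)$ for $C\ge b$, but convexity yields the reverse: the chord from the origin lies \emph{above} the graph on $[0,C]$, so $\Phi(x,b)\le \tfrac{b}{C}\Phi(x,C)$, i.e.\ $\Phi(x,C)\ge \tfrac{C}{b}\Phi(x,b)$. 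In fact you note one line earlier that ``$\Phi(x,sb)\le s\,\Phi(x,b)$ for $s\ge 1$ is false in general''---and then immediately assert that very inequality with $s=C/b$.

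The remedy, which is precisely what the paper does, is to scale on the $\Phi^*$ side rather than the $\Phi$ side. Convexity of $\Phi^*(y,\cdot)$ with $\Phi^*(y,0)=0$ gives $\Phi^*(y,c/n)\le \tfrac{1}{n}\Phi^*(y,c)$ for every $n\in\N$, so one may choose $c_1=c/n$ with $\esssup_{y}\Phi^*(y,c_1)\le b$. Then $\psi((x,y),c_1)=\Phi(x,\Phi^*(y,c_1))\le \Phi(x,b)$ directly by monotonicity, and
\[
I_\psi(c_1 k)\le \int_\alpha^\beta\!\int_\alpha^\beta \Phi(x,b)\,dy\,dx=(\beta-\alpha)\int_\alpha^\beta \Phi(x,b)\,dx<\infty,
\]
with no further comparison needed. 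After this correction your argument coincides with the paper's.
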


\begin{proof}

By Lemma \ref{lem:111} we get  $\esssup_{x\in\Omega} \Phi^*(x,c) <\infty$ for some $c>0$.
Now by convexity of $\Phi^*$, for all $n\in\N$, $\esssup_{x\in\Omega} \Phi^*\left(x,\frac{c}{n}\right) \le \frac{1}{n}\esssup_{x\in\Omega} \Phi^*(x,c)$. Thus  for for $b$ from the assumptions and some $c_1>0$, 
\[
\esssup_{x\in\Omega} \Phi^*(x,c_1) \le b.
\]
Setting $k(x,y) = \chi_{(\alpha,x)}(y)$, $x,y\in (\alpha, \beta)$, we have $Au(x) = \int_\Omega k(x,y) u(y)\,dy$, $u\in L^0$.  Let $\psi$ be as in Corollary \ref{cor:12}. Then
\begin{align*}
I_\psi(c_1k)& = \int_\alpha^\beta \int_\alpha^\beta \psi((x,y), c_1 \chi_{(\alpha,x)}(y))\,dxdy
=  \int_\alpha^\beta \int_\alpha^\beta \Phi(x,\Phi^*(y,c_1\chi_{(\alpha,x)}(y))\,dxdy\\
&\le  \int_\alpha^\beta \int_\alpha^\beta \Phi(x,b)\,dxdy = (\beta - \alpha) \int_\alpha^\beta \Phi(x,b)\,dx < \infty.
\end{align*}
Hence the kernel $k\in L^\psi$. Finally by Corollary \ref{cor:12}, the Voltera operator is bounded on $L^\Phi$.

\end{proof}

Since an Orlicz function satisfies condition (V), the next result is instant.

\begin{corollary}\label{cor:voltera-orlicz}
Let  $\varphi$ be an Orlicz function and $\Omega=(\alpha,\beta)$, where $-\infty<\alpha<\beta<\infty$. Then the Voltera operator is bounded on Orlicz space $L^\varphi$. 
\end{corollary}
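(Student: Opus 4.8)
The plan is to deduce this corollary directly from Theorem~\ref{th:12} by verifying that every Orlicz function $\varphi$, viewed as a parameter-independent $MO$ function $\Phi(x,t)=\varphi(t)$ on the finite interval $(\alpha,\beta)$ with Lebesgue measure, satisfies condition~(V). Since $\varphi$ is not identically zero, left continuous and convex with $\varphi(0)=0$, there exists $b_0>0$ with $0<\varphi(b_0)<\infty$; then for every $0<b\le b_0$ we have $0<\varphi(b)\le\varphi(b_0)<\infty$ by monotonicity of $\varphi$.

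Next I would check the two clauses of~(V). For the integrability clause, pick any such $b$ with $\varphi(b)<\infty$; then
\[
\int_\alpha^\beta \Phi(x,b)\,dx = (\beta-\alpha)\,\varphi(b) < \infty,
\]
using $\beta-\alpha<\infty$. For the essential infimum clause, take $a=b$ (or any $a>0$ with $\varphi(a)>0$, which exists since $\varphi\not\equiv 0$ and $\varphi$ is nondecreasing); then $\essinf_{x\in\Omega}\Phi(x,a)=\varphi(a)>0$. Hence $\Phi$ satisfies~(V).

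With~(V) established, Theorem~\ref{th:12} immediately gives that the Voltera operator $Au(x)=\int_\alpha^x u(y)\,dy$ is bounded on $L^\Phi=L^\varphi$, which is the claim. There is essentially no obstacle here: the only mild point worth stating explicitly is why both $\varphi(b)<\infty$ for some $b>0$ and $\varphi(a)>0$ for some $a>0$ hold, and both follow at once from the defining properties of an Orlicz function ($b_\varphi>0$ and $\varphi\not\equiv 0$, together with monotonicity). The proof is therefore a one-line invocation preceded by this verification.
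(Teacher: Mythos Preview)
Your proposal is correct and follows exactly the paper's approach: the paper simply states that an Orlicz function satisfies condition~(V) and invokes Theorem~\ref{th:12}, whereas you spell out the verification of~(V) explicitly. (In fact, since an Orlicz function in this paper's sense is finite-valued everywhere, you can take any $b>0$ for the first clause without passing through $b_0$ and $b_\varphi$.)
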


\begin{corollary}\label{cor:voltera-lp}
Let $1\le p(x)<\infty$ a.e. on $(\alpha,\beta)$, where $-\infty<\alpha<\beta<\infty$. Then the Voltera operator is bounded on $L^{p(\cdot)}$. 
\end{corollary}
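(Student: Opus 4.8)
The plan is to deduce this immediately from Theorem \ref{th:12} by verifying that every variable exponent $MO$ function of the form $\Phi(x,t) = t^{p(x)}/p(x)$ with $1 \le p(x) < \infty$ a.e.\ on $\Omega = (\alpha,\beta)$ satisfies condition (V) with respect to the Lebesgue measure on the finite interval $(\alpha,\beta)$. Condition (V) has two clauses: there should exist $b>0$ with $\int_\alpha^\beta \Phi(x,b)\,dx < \infty$, and there should exist $a>0$ with $\essinf_{x\in\Omega}\Phi(x,a) > 0$.

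For the integrability clause I would simply take $b = 1$. Then $\Phi(x,1) = 1/p(x) \le 1$ for a.a.\ $x$, since $p(x) \ge 1$, and hence $\int_\alpha^\beta \Phi(x,1)\,dx \le \beta - \alpha < \infty$. For the positivity clause I would again take $a = 1$: since $1 \le p(x)$ a.e.\ and $u \mapsto 1/u$ is decreasing, but we also need a lower bound, I note that $\Phi(x,1) = 1/p(x)$ need not be bounded below if $p$ is unbounded; instead I would pick $a$ large. Concretely, observe $\Phi(x,a) = a^{p(x)}/p(x)$ and for $a \ge 1$ we have $a^{p(x)} \ge a$, while more carefully, taking $a = e$ gives $\Phi(x,e) = e^{p(x)}/p(x) \ge \inf_{s \ge 1} e^{s}/s = e$ (the function $s \mapsto e^s/s$ on $[1,\infty)$ attains its minimum at $s=1$ with value $e$). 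Hence $\essinf_{x\in\Omega}\Phi(x,e) \ge e > 0$. Thus (V) holds with $b=1$, $a=e$.

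With (V) verified, Theorem \ref{th:12} applies directly and yields that the Voltera operator $Au(x) = \int_\alpha^x u(y)\,dy$ is bounded on $L^{p(\cdot)} = L^\Phi$, completing the proof. I do not expect any real obstacle here: the only point requiring a moment's care is the positivity clause of (V), where one must choose the constant $a$ bounded away from the degenerate regime (taking $a$ too small, e.g.\ $a<1$, would make $\Phi(x,a) = a^{p(x)}/p(x) \to 0$ along any sequence with $p(x)\to\infty$, so the naive choice $a=1$ forces the separate observation that $1/p(x)$ is bounded below only when $p^+ < \infty$). Choosing $a \ge e$ (or indeed any $a$ with $a \ge e$, using the elementary bound $a^{s}/s \ge a^{s}/s \ge$ its minimum over $s \in [1,\infty)$) sidesteps this, and no hypothesis on $p^+$ or $p^-$ is needed — which is exactly why the corollary holds for arbitrary measurable $p(\cdot)$ with $1 \le p(\cdot) < \infty$.
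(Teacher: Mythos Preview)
Your proof is correct and follows essentially the same route as the paper: verify condition (V) for $\Phi(x,t)=t^{p(x)}/p(x)$ and then invoke Theorem~\ref{th:12}. The only cosmetic difference lies in the positivity clause of (V): the paper notes $\Phi^*(x,1)=1/q(x)\le 1$ and appeals to Lemma~\ref{lem:111}, whereas you verify $\essinf_{x}\Phi(x,e)\ge e>0$ directly via the elementary bound $e^{s}/s\ge e$ for $s\ge 1$ --- a slightly more self-contained check that avoids the detour through~$\Phi^*$.
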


\begin{proof}
Since  for variable exponent $MO$ function, $\Phi(x,1) = 1/p(x) \le 1$ and $\Phi^*(x,1) = 1/q(x) \le 1$ for a.a. $x\in\Omega$, so in view of Lemma \ref{lem:111},  the conditions in Theorem \ref{th:12} are satisfied and the conclusion holds.

\end{proof}

\begin{corollary}\label{cor:volteralpq}
Let $\Phi$ be a double face functional, that is
\[
\Phi(x,t) = t^{p(x)} + a(x) t^{r(x)},
\]
where $a,p,r$ are real measurable functions on $(\alpha,\beta)$, $-\infty<\alpha<\beta<\infty$, such that $a(x) \ge 0$,  $1 \le p(x)<\infty, \ 1\le r(x) < \infty$ a.e. on $(\alpha, \beta)$. If $a\in L^1$ then the Voltera operator is bounded on $L^\Phi$.
\end{corollary}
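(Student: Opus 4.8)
The plan is to deduce this directly from Theorem \ref{th:12} by checking that the double phase functional
\[
\Phi(x,t) = t^{p(x)} + a(x) t^{r(x)}
\]
satisfies condition {\rm (V)} on $(\alpha,\beta)$. Observe first that, since $p(x),r(x)<\infty$ a.e., $\Phi$ is a finite-valued $MO$ function, so Definition \ref{V condtion} applies to it. I will verify both requirements of {\rm (V)} using the single argument $t=1$.

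For the integrability requirement, note that $\Phi(x,1) = 1 + a(x)$ for a.a.\ $x$, hence
\[
\int_\alpha^\beta \Phi(x,1)\,dx = (\beta-\alpha) + \int_\alpha^\beta a(x)\,dx < \infty,
\]
because $\beta - \alpha < \infty$ and $a\in L^1$ by hypothesis. For the lower bound requirement, since $a(x)\ge 0$ a.e.\ we have $\Phi(x,1) = 1 + a(x) \ge 1$ for a.a.\ $x$, so $\essinf_{x\in(\alpha,\beta)} \Phi(x,1) \ge 1 > 0$.

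Thus condition {\rm (V)} holds, with both constants in Definition \ref{V condtion} taken equal to $1$, and Theorem \ref{th:12} then immediately yields that the Voltera operator $Au(x) = \int_\alpha^x u(y)\,dy$ is bounded on $L^\Phi$. I expect no real obstacle here; the only point deserving a word of care is the notational clash between the coefficient function $a(x)$ of the double phase functional and the constant called $a$ appearing in the statement of {\rm (V)}, which I would resolve by renaming the latter in the write-up.
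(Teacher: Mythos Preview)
Your proof is correct and essentially identical to the paper's own argument: both verify condition {\rm (V)} by computing $\Phi(x,1)=1+a(x)$, noting that its integral equals $(\beta-\alpha)+\int_\alpha^\beta a < \infty$ and that $\essinf_x \Phi(x,1)\ge 1$, then invoking Theorem \ref{th:12}. Your added remarks about finite-valuedness and the notational clash are sound but not present in the paper's (terser) version.
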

\begin{proof}
We have 
\[
\int_\alpha^\beta\Phi(x,1)\,dx = (\beta - \alpha) + \int_\alpha^\beta a(x)\, dx < \infty,
\]
and clearly $\essinf_{x\in\Omega} \Phi(x,1) \ge 1$. In view of Theorem \ref{th:12} we conclude the proof. 

\end{proof}

\section{Copy of $\ell^\infty$}\label{section 4.3}

In this part we give conditions on $\Phi$ in order to $W^{1,\Phi}$  contain a subspace isomorphic to $\ell^\infty$. We start with Musielak-Orlicz space $L^\Phi$.
\begin{proposition} \cite{Kam1984, Kam1998}\label{pr:ellinftyMO}
Let $(\Omega,\Sigma,\mu)$ be a non-atomic measure space. Then a $eMO$ function $\Phi$ does not satisfy condition $\Delta_2$ if and only if  there exists a sequence of bounded and non-negative functions $f_n \in L^\Phi$ such that $f_n \wedge f_m =0$ for $n\ne m$, $I_\Phi(f_n) \le 1/2^n$ and $\|f_n\|_\Phi = 1$ for all $n\in\N$. Consequently,
\[
\left\|\sum_{n=1}^\infty f_n \right\|_\Phi = \|f_n\|_\Phi =1 
\]
for all $n\in\N$.   
\end{proposition}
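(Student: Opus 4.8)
The plan is to prove the two implications of the equivalence separately and then read off the displayed identity. For the implication that the existence of such a sequence forces $\Phi\notin\Delta_2$, I would argue by contradiction: suppose $(f_n)$ is as described and $\Phi$ satisfies $\Delta_2$. Then $\Phi$ is automatically finite-valued a.e., since from $\Phi(x,2t)\le K\Phi(x,t)+h(x)$ with $h\in L^1$, applied at $t=2^{-k}t_0$ together with left continuity, $\Phi(x,t_0)=\infty$ on a set of positive measure would contradict $\Phi(x,0^+)=0$. Hence $\Phi$ is an $MO$ function and Theorem~\ref{th:ordconMO} applies, so modular convergence to $0$ is equivalent to norm convergence to $0$ in $L^\Phi$; but $I_\Phi(f_n)\le 2^{-n}\to 0$ would then give $\|f_n\|_\Phi\to 0$, contradicting $\|f_n\|_\Phi=1$.

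For the converse, which is the substantive direction, I would start from the observation that, since each $\Phi(x,\cdot)$ is convex with $\Phi(x,0)=0$, iterating the defining inequality shows $\Phi\notin\Delta_2$ to be equivalent to the failure, for every $\eta\in(0,1]$, of the condition $\Phi(x,(1+\eta)t)\le K\Phi(x,t)+h(x)$ with $h\in L^1$. Fixing $n$ and a sequence $\eta_j\downarrow 0$, the negation of the $(1+\eta_j)$-condition furnishes, for each $j$, a set $E_{n,j}$ of positive finite measure and a measurable $t_{n,j}\colon E_{n,j}\to(0,\infty)$, which after restricting $E_{n,j}$ we may take with $0<\int_{E_{n,j}}\Phi(x,t_{n,j}(x))\,d\mu<\infty$, such that $\Phi(x,(1+\eta_j)t_{n,j}(x))>K_{n,j}\,\Phi(x,t_{n,j}(x))$ on $E_{n,j}$ for a constant $K_{n,j}$ we are free to prescribe arbitrarily large. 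Since $\mu$ is non-atomic the absolutely continuous set function $A\mapsto\int_A\Phi(x,(1+\eta_j)t_{n,j}(x))\,d\mu$ is non-atomic, and for $K_{n,j}$ large enough its total value exceeds $2$, so I can shrink $E_{n,j}$ to $E'_{n,j}$ with $\int_{E'_{n,j}}\Phi(x,(1+\eta_j)t_{n,j}(x))\,d\mu=2$; then $\int_{E'_{n,j}}\Phi(x,t_{n,j}(x))\,d\mu<2/K_{n,j}$. Using non-atomicity again I arrange all the sets $E'_{n,j}$ (over all $n$ and $j$) pairwise disjoint and set
\[
f_n=\sum_{j=1}^{\infty}t_{n,j}\,\chi_{E'_{n,j}}\ge 0 .
\]
Choosing $K_{n,j}$ to grow fast enough in $j$ gives $I_\Phi(f_n)=\sum_{j}\int_{E'_{n,j}}\Phi(x,t_{n,j}(x))\,d\mu\le 2^{-n}$, hence $\|f_n\|_\Phi\le 1$; on the other hand, for any $\lambda<1$ pick $j$ with $(1+\eta_j)^{-1}>\lambda$, so $f_n/\lambda\ge(1+\eta_j)t_{n,j}$ on $E'_{n,j}$ and therefore $I_\Phi(f_n/\lambda)\ge 2>1$, which yields $\|f_n\|_\Phi\ge(1+\eta_j)^{-1}$ and, letting $j\to\infty$, $\|f_n\|_\Phi=1$.

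The point I expect to be genuinely delicate — and it is the technical heart of the argument — is guaranteeing that the $f_n$ can be taken \emph{bounded}: this requires the doubling defect to be available already at bounded arguments $t_{n,j}$, uniformly in $j$, which rests on a finer analysis of how $\Delta_2$ can fail for a general $eMO$ function (in particular of how $\Phi(x,\cdot)$ may ``turn on'' in an $x$-dependent, non-integrable way); I would invoke the constructions of \cite{Kam1984, Kam1998} here. Finally the displayed identity is immediate: the supports of the $f_n$ are pairwise disjoint, so $I_\Phi\!\left(\sum_{n}f_n\right)=\sum_{n}I_\Phi(f_n)\le\sum_{n}2^{-n}=1$ and hence $\left\|\sum_{n}f_n\right\|_\Phi\le 1$; and since $0\le f_k\le\sum_{n}f_n$ for every $k$, the lattice property of $L^\Phi$ gives $\left\|\sum_{n}f_n\right\|_\Phi\ge\|f_k\|_\Phi=1$, so $\left\|\sum_{n}f_n\right\|_\Phi=1=\|f_k\|_\Phi$ for all $k$.
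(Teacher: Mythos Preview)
The paper does not supply a proof of this proposition; it is stated with citation to \cite{Kam1984, Kam1998} and used as a black box. So there is no ``paper's own proof'' to compare against.

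Your sketch follows the standard route one finds in those references. The reverse implication and the ``Consequently'' clause are handled cleanly: the contradiction via Theorem~\ref{th:ordconMO} is correct (modulo the finite-values point, which is minor), and the computation $I_\Phi(\sum f_n)=\sum I_\Phi(f_n)\le 1$ together with the lattice inequality $\|f_k\|_\Phi\le\|\sum f_n\|_\Phi$ is exactly right. For the forward direction, the construction via the failure of the $(1+\eta_j)$-condition, non-atomicity, and calibration of the level sets is the right architecture and does yield disjointly supported $f_n$ with $I_\Phi(f_n)\le 2^{-n}$ and $\|f_n\|_\Phi=1$.

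The one genuine gap is the one you already flag: as written, your $f_n=\sum_j t_{n,j}\chi_{E'_{n,j}}$ need not be bounded, since nothing in the negation of $\Delta_2$ forces the witnessing arguments $t_{n,j}(x)$ to stay uniformly bounded in $j$ (or even in $x$). You are right that this is where the technical content lies, and you are also right that it is precisely what \cite{Kam1984, Kam1998} supply; but since you ultimately invoke those references for this step, your argument is not self-contained on exactly the point the paper is citing them for. If you want a fully independent proof, you would need to show that the $\Delta_2$ defect can always be witnessed at bounded $t$, e.g.\ by a careful case analysis of how $\sup_{t>0}(\Phi(x,2t)-c\Phi(x,t))$ fails to be integrable.
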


  The next theorem results directly from Proposition \ref{pr:ellinftyMO}.

\begin{theorem}\label{th:linftyMO}
Let the measure space $(\Omega, \Sigma, \mu)$ be  separable and non-atomic. A Musielak-Orlicz space $L^\Phi$ contains an isomorphic copy of $\ell^\infty$ if and only if $\Phi$ does not satisfy condition $\Delta_2$.
\end{theorem}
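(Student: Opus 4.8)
The plan is to prove both implications directly from the results already in hand. For the sufficiency ($\Phi$ does not satisfy $\Delta_2$ $\Rightarrow$ $L^\Phi$ contains a copy of $\ell^\infty$) I would construct an explicit isometric embedding of $\ell^\infty$ built from the disjointly supported sequence produced by Proposition \ref{pr:ellinftyMO}. For the necessity (a copy of $\ell^\infty$ $\Rightarrow$ $\Phi$ does not satisfy $\Delta_2$) I would argue by contraposition, using that under $\Delta_2$ the space $L^\Phi$ is separable and hence too small to contain the nonseparable space $\ell^\infty$.

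\emph{Sufficiency.} Assume $\Phi$ does not satisfy $\Delta_2$. Since $\mu$ is non-atomic, Proposition \ref{pr:ellinftyMO} yields bounded non-negative functions $f_n \in L^\Phi$ with pairwise disjoint supports (i.e.\ $f_n \wedge f_m = 0$ for $n \ne m$), $\|f_n\|_\Phi = 1$ for all $n$, and $\|\sum_{n=1}^\infty f_n\|_\Phi = 1$. I would define $T : \ell^\infty \to L^0(\Omega)$ by $T(a) = \sum_{n=1}^\infty a_n f_n$ for $a = (a_n)$; because the $f_n$ are disjointly supported, this series defines a measurable function pointwise a.e.\ and $|T(a)| = \sum_n |a_n| f_n \le \|a\|_\infty \sum_n f_n$ a.e. Since $L^\Phi$ is a Banach function space (ideal property), it follows that $T(a) \in L^\Phi$ with $\|T(a)\|_\Phi \le \|a\|_\infty \|\sum_n f_n\|_\Phi = \|a\|_\infty$, so $T$ maps $\ell^\infty$ into $L^\Phi$ and is bounded by $1$; it is plainly linear. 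For the reverse estimate, disjointness of supports gives $|a_m| f_m \le |T(a)|$ a.e.\ for every $m$, whence $\|T(a)\|_\Phi \ge |a_m|\,\|f_m\|_\Phi = |a_m|$, and taking the supremum over $m$ yields $\|T(a)\|_\Phi \ge \|a\|_\infty$. Thus $T$ is a linear isometry of $\ell^\infty$ into $L^\Phi$, and in particular $L^\Phi$ contains an isomorphic copy of $\ell^\infty$.

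\emph{Necessity.} I would prove the contrapositive: suppose $\Phi$ satisfies $\Delta_2$. By hypothesis the measure $\mu$ is separable, so Theorem \ref{th:sepMO} gives that $L^\Phi$ is a separable Banach space. Every closed subspace of a separable Banach space is separable, whereas $\ell^\infty$ is not separable; hence $L^\Phi$ cannot contain an isomorphic copy of $\ell^\infty$.

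\emph{On the difficulty.} There is essentially no deep obstacle once Proposition \ref{pr:ellinftyMO} and Theorem \ref{th:sepMO} are available; the only points needing a little care are (i) checking that the formal series $\sum_n a_n f_n$ genuinely represents an element of $L^\Phi$ for every $a \in \ell^\infty$, which is immediate from the ideal property of $L^\Phi$ together with the normalization $\|\sum_n f_n\|_\Phi = 1$, and (ii) the lower norm bound, where disjointness of the supports is exactly what lets one dominate each coordinate $|a_m| f_m$ pointwise by $|T(a)|$. It is worth noting that the embedding obtained is in fact isometric, which is slightly stronger than the stated conclusion.
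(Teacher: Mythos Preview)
Your proposal is correct and follows essentially the same approach as the paper: for sufficiency you build an isometric embedding of $\ell^\infty$ from the disjoint sequence provided by Proposition~\ref{pr:ellinftyMO}, and for necessity you invoke separability of $L^\Phi$ under $\Delta_2$ via Theorem~\ref{th:sepMO}. Your write-up is slightly more careful about the well-definedness of $T(a)=\sum_n a_n f_n$ as an element of $L^\Phi$, but the argument is the same.
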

\begin{proof}
If $\Phi\notin \Delta_2$, then taking any element $a = \{a_n\}_{n=1}^\infty\in \ell^\infty$ and  the sequence $\{f_n\}_{n=1}^\infty$ from Proposition \ref{pr:ellinftyMO}, we get for every $n\in\N$,
\[
|a_n| = \|a_nf_n\|_\Phi \le\left\|\sum_{n=1}^\infty a_nf_n\right\|_\Phi \le 
\|a\|_\infty.
\]
It follows
\[
\left\|\sum_{n=1}^\infty a_nf_n\right\|_\Phi = 
\|a\|_\infty,
\]
and in fact $\ell^\infty$ is an isometric isomorphic subspace of $L^\Phi$.

  Now assume opposite that $L^\Phi$ has a subspace isomorphic to $\ell^\infty$. Then $L^\Phi$ can not be separable, and so $\Phi\notin\Delta_2$ by Theorem \ref{th:sepMO}.

\end{proof}

\begin{corollary}\label{cor:lpxlinfty}
Let the measure space $(\Omega, \Sigma, \mu)$ be  separable and non-atomic. The variable exponent Lebesgue space $L^{p(\cdot)}$ contains an isomorphic subspace to $\ell^\infty$ if and only if $p^+ = \infty$. 
 \end{corollary}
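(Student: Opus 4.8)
The plan is to derive this as an immediate consequence of Theorem~\ref{th:linftyMO} together with the characterization of $\Delta_2$ for the variable exponent $MO$ function given in Theorem~\ref{th:deltalpx}. Concretely, recall that $L^{p(\cdot)} = L^\Phi$ where $\Phi(x,t) = t^{p(x)}/p(x)$, and that by Theorem~\ref{th:deltalpx} this function $\Phi$ satisfies $\Delta_2$ if and only if $p^+ < \infty$; equivalently, $\Phi \notin \Delta_2$ if and only if $p^+ = \infty$.

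The argument then runs as follows. Since $(\Omega,\Sigma,\mu)$ is assumed separable and non-atomic, Theorem~\ref{th:linftyMO} applies verbatim to $L^\Phi = L^{p(\cdot)}$: the space $L^{p(\cdot)}$ contains an isomorphic copy of $\ell^\infty$ if and only if $\Phi$ does not satisfy $\Delta_2$. Combining this equivalence with the one from Theorem~\ref{th:deltalpx} ($\Phi \notin \Delta_2 \iff p^+ = \infty$), we obtain that $L^{p(\cdot)}$ contains an isomorphic subspace to $\ell^\infty$ if and only if $p^+ = \infty$, which is exactly the assertion.

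There is essentially no obstacle here; the only point requiring a word of care is that Theorem~\ref{th:linftyMO} is stated for a general $eMO$ function $\Phi$, and one must note that the variable exponent function $\Phi(x,t) = t^{p(x)}/p(x)$ with $1 \le p(x) < \infty$ a.e.\ is a genuine $MO$ function (finite-valued), so it indeed falls within the scope of that theorem and of Theorem~\ref{th:deltalpx}. With that observation, the proof is a one-line chaining of the two cited equivalences.

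\begin{proof}
Write $L^{p(\cdot)} = L^\Phi$ with $\Phi(x,t) = t^{p(x)}/p(x)$, which is an $MO$ function since $1\le p(x) < \infty$ a.e. As $(\Omega,\Sigma,\mu)$ is separable and non-atomic, Theorem~\ref{th:linftyMO} gives that $L^{p(\cdot)}$ contains an isomorphic copy of $\ell^\infty$ if and only if $\Phi$ does not satisfy condition $\Delta_2$. By Theorem~\ref{th:deltalpx}, $\Phi$ satisfies $\Delta_2$ if and only if $p^+ < \infty$, equivalently $\Phi \notin \Delta_2$ if and only if $p^+ = \infty$. Combining the two equivalences yields the claim.
\end{proof}
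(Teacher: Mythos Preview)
Your proof is correct and follows essentially the same approach as the paper: apply Theorem~\ref{th:linftyMO} to the $MO$ function $\Phi(x,t)=t^{p(x)}/p(x)$ and then invoke Theorem~\ref{th:deltalpx} to translate the $\Delta_2$ condition into $p^+<\infty$. The paper's proof is in fact slightly terser, but the logical content is identical.
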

 \begin{proof}
By Theorem \ref{th:deltalpx}, the variable exponential function satisfies $\Delta_2$ if and only if $p^+<\infty$. Therefore the proof is completed by application of Theorem \ref{th:linftyMO}.

\end{proof}

\begin{theorem} \label{th:ellinftycopyI}
Let $\Phi$ be a $MO$ function on $ (\alpha,\beta)$, $\infty<\alpha<\beta<\infty$ with the Lebesgue measure. If $W^{1,\Phi}$ contains a subspace isomorphic to $\ell^\infty$ then $\Phi$ does not satisfy condition $\Delta_2$.
\end{theorem}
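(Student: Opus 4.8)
The plan is to establish the contrapositive: assuming that $\Phi$ satisfies condition $\Delta_2$, I will show that $W^{1,\Phi}$ is a separable Banach space, and therefore cannot contain a subspace isomorphic to the non-separable space $\ell^\infty$.

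First I would invoke separability of $L^\Phi$. Since here $\Omega=(\alpha,\beta)$ is equipped with the Lebesgue measure, which is a separable measure, Theorem \ref{th:sepMO} applies and gives that $\Phi\in\Delta_2$ implies $L^\Phi(\alpha,\beta)$ is separable. (Equivalently one could first use Theorem \ref{th:ordconMO} to get order continuity and then separability, but this is not needed.)

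Next I would transfer separability to $W^{1,\Phi}$ via the product embedding. As recorded in the introduction, the map $f\mapsto (f,f')$ is an isometric embedding of $W^{1,\Phi}$ into $L^\Phi\times L^\Phi$ with the norm $\|(g,h)\|=\|g\|_\Phi+\|h\|_\Phi$, so that $\|f\|_{1,\Phi}$ is exactly the induced norm. A finite product of separable metric spaces is separable, and every subset of a separable metric space is itself separable in the induced metric; hence $W^{1,\Phi}$ is separable. Finally, an isomorphic copy of $\ell^\infty$ inside $W^{1,\Phi}$ would be a non-separable subset of a separable metric space (since $\ell^\infty$ is non-separable and separability is a topological invariant), which is impossible. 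Therefore, if $W^{1,\Phi}$ contains a subspace isomorphic to $\ell^\infty$, then $\Phi$ does not satisfy $\Delta_2$.

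I do not expect any real obstacle here: the argument is a routine separability chain. The only points worth a sentence of justification are the isometric embedding $W^{1,\Phi}\hookrightarrow L^\Phi\times L^\Phi$, which is already stated in the paper, and the elementary facts that subspaces of separable metric spaces are separable and that $\ell^\infty$ is not separable.
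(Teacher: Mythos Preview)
Your argument is correct. The contrapositive via separability works exactly as you describe: $\Phi\in\Delta_2$ together with the separability of Lebesgue measure on $(\alpha,\beta)$ gives, by Theorem~\ref{th:sepMO}, that $L^\Phi$ is separable; hence $L^\Phi\times L^\Phi$ is separable, and so is its isometric subspace $W^{1,\Phi}$, which therefore cannot contain a copy of $\ell^\infty$. One small correction: the isometric embedding $f\mapsto(f,f')$ is not stated in the introduction, but it is immediate from the definition $\|f\|_{1,\Phi}=\|f\|_\Phi+\|f'\|_\Phi$, so this needs no further justification.

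The paper's proof follows a different, somewhat more elaborate route. It realizes $L^\Phi\times L^\Phi$ as a Musielak--Orlicz space $L^{\overline{\Phi}}$ on $\Omega\times\{1,2\}$ (with $\overline{\Phi}((x,y),t)=\Phi(x,t)$), checks that this identification is an isomorphism, and then applies Theorem~\ref{th:linftyMO} to $L^{\overline{\Phi}}$: a copy of $\ell^\infty$ in $W^{1,\Phi}$ forces $\overline{\Phi}\notin\Delta_2$, and a direct computation shows $\Phi\in\Delta_2\Rightarrow\overline{\Phi}\in\Delta_2$. Since Theorem~\ref{th:linftyMO} itself rests on the separability criterion of Theorem~\ref{th:sepMO}, the two proofs are ultimately grounded in the same fact; your version simply bypasses the auxiliary $MO$ construction and is shorter. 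The paper's detour does buy one thing: it records that $L^\Phi\times L^\Phi$ is itself (isomorphic to) a Musielak--Orlicz space, a structural observation that can be reused, but it is not needed for the present statement.
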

\begin{proof}
 Let $A=\{ (f,f'): f\in W^{1,\Phi}\}$ be a subspace of  the product $L^\Phi \times L^\Phi$. If $L^\Phi\times L^\Phi$ is equipped with the norm $\|(\cdot,\cdot\cdot)\|_{L^\Phi\times L^\Phi}=\|\cdot\|_{\Phi}+\|\cdot\cdot\|_{\Phi}$, then  the space $W^{1,\Phi}$ is isometrically isomorphic to  $A$. Notice that $L^\Phi \times L^\Phi$ is isomorphic to the MO space $L^{\overline{\Phi}}(\Omega\times\{1,2\})$ where $\overline{\Phi}:\Omega \times\{1,2\}\times [0,\infty)\to [0,\infty)$ is defined as
    \[
    \overline{\Phi}(x,y,t)=\Phi(x,t)\chi_{\{1\}}(y)+\Phi(x,t)\chi_{\{2\}}(y),
    \]
for a.a. $x\in\Omega, \ y\in\{1,2\}$ and $t\geq 0$.
      Indeed, the operator $T:L^\Phi \times L^\Phi\to L^{\overline{\Phi}}(\Omega\times \{1,2\})$ defined by 
\[
(T(f_1,f_2))(x,y)=f_1(x)\chi_{\{1\}}(y)+f_2(x)\chi_{\{2\}}(y), \ \ \ \ f_1, f_2 \in L^\Phi,
\]
for a.a. $x\in\Omega$, $y\in\{1,2\}$,
is clearly a linear bijection and its inverse is given by
\[
(T^{-1}f)(x)=(f(x,1),f(x,2)), \ \ \ \ f\in L^{\overline{\Phi}}(\Omega\times \{1,2\}). 
\]
Moreover, taking any $(f_1,f_2)\neq (0,0)$ where $(f_1,f_2)\in L^\Phi\times L^\Phi$,
\begin{align*}
   &I_{\overline{\Phi}}\left(\frac{T(f_1,f_2)}{2\|(f_1,f_2)\|_{L^\Phi\times L^\Phi}}\right)\\&=
   \int\limits_\Omega\Phi\left(x,\frac{|f_1(x)|}{2(\|f_1\|_\Phi+\|f_2\|_\Phi)}\right)dx+\int\limits_\Omega\Phi\left(x,\frac{|f_2(x)|}{2(\|f_1\|_\Phi+\|f_2\|_\Phi)}\right)dx\leq 1.
   \end{align*}
Therefore, for every $(f_1,f_2)\in L^\Phi\times L^\Phi$ we have
\[\|T(f_1,f_2)\|_{L^{\overline{\Phi}}}\leq 2\|(f_1,f_2)\|_{L^\Phi\times L^\Phi}.\]
On the other hand, if $0\neq f\in L^{\overline{\Phi}}(\Omega\times \{1,2\}) $, then for $j=1,2$,
\begin{align*}
  &I_{\Phi}\left(\frac{f(\cdot,j)}{\|f\|_{\overline{\Phi}}}\right)=\int\limits_\Omega \Phi\left(x,\frac{|f(x,j)|}{\|f\|_{\overline{\Phi}}}\right)dx \\&\leq 
\int\limits_\Omega \Phi\left(x,\frac{|f(x,1)|}{\|f\|_{\overline{\Phi}}}\right)dx+ 
\int\limits_\Omega \Phi\left(x,\frac{|f(x,2)|}{\|f\|_{\overline{\Phi}}}\right)dx=
I_{\overline{\Phi}}\left(\frac{f}{\|f\|_{\overline{\Phi}}}\right)\leq 1.
\end{align*}
Hence $\|f(\cdot,j)\|_\Phi \le \|f\|_{\overline\Phi}$  for $j=1,2$.
Consequently,
for every $f\in L^{\overline{\Phi}}(\Omega\times \{1,2\})  $ we have
\[\|T^{-1}f\|_{L^\Phi\times L^\Phi}  = \|f(\cdot,1)\|_\Phi + \|f(\cdot,2)\|_\Phi \leq 2\|f\|_{\overline{\Phi}}.\]
From this we conclude that indeed $L^\Phi \times L^\Phi$ is isomorphic to $L^{\overline{\Phi}}(\Omega\times \{1,2\})$.

  If $W^{1,\Phi}$ contains $\ell^\infty$ isomorphically,  then $L^\Phi\times L^\Phi$ does it too and so $L^{\overline{\Phi}}$ must contain $\ell^\infty$, which implies that ${\overline{\Phi}}$ does not satisfy condition $\Delta_2$ by Theorem \ref{th:linftyMO}. Now we argue by contradiction. If $\Phi$ satisfies $\Delta_2$, then there exist a constant $C>0$ and a non-negative function $h\in L^1(\Omega)$ such that for any $t\geq 0$ and a.a. $x\in\Omega$,
\[\Phi(x,2t)\leq C\Phi(x,t)+h(x).\]
Hence for any $t\geq 0$ and a.a. $(x,y)\in\Omega\times\{1,2\}$ we have
\begin{align*}
  &\overline{\Phi}(x,y,2t)=\Phi(x,2t)\chi_{\{1\}}(y)+\Phi(x,2t)\chi_{\{2\}}(y)\leq\\&
  (C\Phi(x,t)+h(x))\chi_{\{1\}}(y)+(C\Phi(x,t)+h(x))\chi_{\{2\}}(y)=\\&
  C\overline{\Phi}(x,y,t)  + h(x)(\chi_{\{1\}}(y)+\chi_{\{2\}}(y)).
  \end{align*}
The function $H$ given by the formula $H(x,y)=h(x)(\chi_{\{1\}}(y)+\chi_{\{2\}}(y))$ is a non-negative element of $L^1(\Omega\times \{1,2\})$. Therefore ${\overline{\Phi}}$ satisfies the $\Delta_2$ condition, a contradiction.

\end{proof}

\begin{theorem}\label{th:ellinftycopy} Let $\Omega=(\alpha,\beta)$ where $-\infty<\alpha<\beta<\infty$
and $\Phi$ be a $MO$ function on $\Omega$ satisfying condition {\rm (V)}.  If $\Phi$ does not satisfy condition $\Delta_2$ then the Sobolev space $W^{1,\Phi}$ contains a subspace isomorphic to $\ell^\infty$. 
\end{theorem}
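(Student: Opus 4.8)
The plan is to build an explicit linear embedding $\Lambda\colon\ell^\infty\to W^{1,\Phi}$ whose \emph{derivative coordinate} is a disjointly supported sequence in $L^\Phi$, so that the $\ell^\infty$-geometry is read off from the $\|f'\|_\Phi$ summand of the norm \eqref{eq:norm}. Since $\Phi\notin\Delta_2$ and $(\alpha,\beta)$ with Lebesgue measure is non-atomic, Proposition~\ref{pr:ellinftyMO} yields bounded non-negative $f_n\in L^\Phi$ with pairwise disjoint supports, $I_\Phi(f_n)\le 2^{-n}$, $\|f_n\|_\Phi=1$ and $\bigl\|\sum_n f_n\bigr\|_\Phi=1$; in particular $F:=\sum_n f_n\in L^\Phi$. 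For $a=(a_n)\in\ell^\infty$ set $F_a:=\sum_n a_n f_n$, the sum being taken pointwise (at a.e.\ $x$ at most one term is nonzero). Then $|F_a|\le\|a\|_\infty F$, so $F_a\in L^\Phi$ with $\|F_a\|_\Phi\le\|a\|_\infty$, while disjointness gives $|a_n|\,\|f_n\|_\Phi=\|a_nf_n\|_\Phi\le\|F_a\|_\Phi$ for each $n$, whence $\|F_a\|_\Phi=\sup_n|a_n|=\|a\|_\infty$, exactly as in the proof of Theorem~\ref{th:linftyMO}.

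Next I would pass to primitives. The point of condition (V) here is twofold. From $\int_\alpha^\beta\Phi(x,b)\,dx<\infty$ the constant $b$ lies in $L^\Phi$, hence (lattice property) every bounded function does; and from $\essinf_x\Phi(x,a)>0$, together with convexity ($\Phi(x,t)\ge(t/a)\Phi(x,a)$ for $t\ge a$), one gets $L^\Phi(\alpha,\beta)\subset L^1(\alpha,\beta)$, so every $u\in L^\Phi$ has an absolutely continuous primitive and $Au(x)=\int_\alpha^x u$ is a genuine integral. Put $g_n:=Af_n$; each $g_n$ is Lipschitz, hence bounded, hence in $W^{1,\Phi}$ with $g_n'=f_n$ a.e. For $a\in\ell^\infty$ define $G_a:=AF_a$. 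By Theorem~\ref{th:12} the Voltera operator $A$ is bounded on $L^\Phi$ because $\Phi$ satisfies (V), so $G_a\in L^\Phi$ and $\|G_a\|_\Phi\le\|A\|\,\|a\|_\infty$; since $F_a\in L^1$, monotone/dominated convergence gives $G_a(x)=\int_\alpha^x F_a=\sum_n a_n g_n(x)$ pointwise and shows $G_a$ is absolutely continuous with weak derivative $F_a$. Hence $G_a\in W^{1,\Phi}$ and
\[
\|a\|_\infty=\|F_a\|_\Phi=\|G_a'\|_\Phi\le\|G_a\|_{1,\Phi}=\|G_a\|_\Phi+\|F_a\|_\Phi\le(\|A\|+1)\,\|a\|_\infty .
\]

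Finally, $\Lambda a:=G_a=\sum_n a_n g_n$ is linear (each $F_a$ is a pointwise sum and $A$ is linear) and, by the displayed two-sided estimate, an isomorphism onto its range, so $W^{1,\Phi}$ contains an isomorphic copy of $\ell^\infty$. The part I expect to require the most care is precisely the passage to primitives: because $\Phi\notin\Delta_2$ the space $L^\Phi$ is not order continuous, so the series $\sum_n a_n g_n$ need \emph{not} converge in the $\|\cdot\|_{1,\Phi}$-norm, and $G_a$ cannot be defined as a limit of partial sums; it has to be produced as the honest function $AF_a$, which is why both the inclusion $L^\Phi\subset L^1$ (to make $A$ a bona fide integral operator and to legitimise $AF_a=\sum_n a_n g_n$) and the boundedness of $A$ on $L^\Phi$ from Theorem~\ref{th:12} are indispensable. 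Note also that the primitives $g_n$ are \emph{not} disjointly supported, so only the derivative coordinate carries the $\ell^\infty$-structure, the function coordinate being merely kept bounded, through $\|A\|$, so that the upper estimate is finite.
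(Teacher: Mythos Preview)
Your proof is correct and follows essentially the same route as the paper: take the disjointly supported $f_n$ from Proposition~\ref{pr:ellinftyMO}, pass to primitives $g_n=Af_n$ via the Voltera operator (bounded by condition~(V) and Theorem~\ref{th:12}), and read the lower estimate from the derivative coordinate $\|F_a\|_\Phi=\|a\|_\infty$ and the upper estimate from~$\|A\|$. Your treatment is in fact slightly more careful than the paper's on one point: the paper verifies the two-sided bound only for finite partial sums $\sum_{k=1}^m a_k g_k$ and then concludes, whereas you define $G_a=AF_a$ directly for arbitrary $a\in\ell^\infty$ via the pointwise sum $F_a$ and the inclusion $L^\Phi\subset L^1$, thereby avoiding any appeal to norm-convergence of $\sum_n a_n g_n$ (which indeed need not hold since $\Phi\notin\Delta_2$).
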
 

\begin{proof} Let $\{f_k\}\subset L^\Phi$ satisfy the hypothesis of Proposition 
\ref{pr:ellinftyMO}. Since they are bounded on $\Omega$, so $f_k\in L^1$. Define
\begin{equation*}
g_k(x) = \int_\alpha^x f_k(y)\,dy, \ \ \ x\in(\alpha,\beta), \ k\in\N.
\end{equation*}
We have that $g_k\in W^{1,\Phi}$. Indeed $g_k'=f_k \in L^\Phi$, and by the assumption {\rm (V)},   the Voltera operator is bounded on $L^\Phi$,  and so $\|g_k\|_\Phi \le l \|f_k\|_\Phi < \infty$ for some constant $l$. Moreover for every $k\in\N$,
\[
1 = \|f_k\|_\Phi \le \|g_k\|_{1,\Phi} = \|f_k\|_\Phi + \|g_k\|_\Phi \le (1+l) \|f_k\|_\Phi = 1+l.
\]
Analogously, for every $m\in\N$,
\begin{align}\label{eq:12}
1 &\le \left\|\sum_{k=1}^m g_k\right\|_{1,\Phi} \le  \left\|\left(\sum_{k=1}^m g_k\right)'\right\|_\Phi + \left\|\sum_{k=1}^m g_k \right\|_\Phi \\
&= \left\|\sum_{k=1}^m f_k\right\|_\Phi + \left\|\sum_{k=1}^m g_k \right\|_\Phi\le (1+l) \left\|\sum_{k=1}^m f_k\right\|_\Phi \le 1+l.\notag
\end{align}
Hence $\sum_{k=1}^\infty g_k \in W^{1,\Phi}$. Notice that $g_k \ge 0$ and $f_k\ge 0$. Therefore in view of (\ref{eq:12}), for every element $a=(a_k)\in \ell^\infty$, $m\in\N$,
\begin{align}\label{eq:13}
\left\|\sum_{k=1}^m a_k g_k \right\|_{1,\Phi} 
&= \left\|\sum_{k=1}^m a_k g_k' \right\|_\Phi + \left\|\sum_{k=1}^m a_k g_k \right\|_\Phi 
\le \left\|\sum_{k=1}^m |a_k|\,|g_k'| \right\|_\Phi + \left\|\sum_{k=1}^m |a_k|\, |g_k| \right\|_\Phi \\\notag
&= \left\|\sum_{k=1}^m |a_k| f_k\right\|_\Phi + \left\|\sum_{k=1}^m |a_k| g_k \right\|_\Phi
\le \|a\|_\infty \left(\left\|\sum_{k=1}^m f_k\right\|_\Phi + \left\|\sum_{k=1}^m g_k \right\|_\Phi\right)\\\notag
&\le (1+l) \|a\|_\infty\left\|\sum_{k=1}^m f_k\right\|_\Phi \le (1+l) \|a\|_\infty.\notag
\end{align}
On the other hand for every $m, k\in\N$, 
\[
\left\|\sum_{k=1}^m a_k g_k \right\|_{1,\Phi} \ge \left\|\sum_{k=1}^m a_k f_k\right\|_\Phi = \left\|\sum_{k=1}^m |a_k|\, |f_k|\right\|_\Phi \ge \|\,|a_k|\, |f_k|\|_\Phi = |a_k| \, \|f_k\|_\Phi = |a_k|.
\]
Hence for every $m\in\N$,
\begin{equation}\label{eq:14}
\left\|\sum_{k=1}^m a_k g_k \right\|_{1,\Phi} \ge \|a\|_\infty.
\end{equation}
Combining (\ref{eq:13}) and (\ref{eq:14}) we have that $\ell^\infty$ is an isomorphic copy in $W^{1,\Phi}$.

\end{proof}

Since (V) is always satisfied in Orlicz space over $(\alpha, \beta)$, $-\infty < \alpha, \beta < \infty$, we get instantly the following result.

  \begin{corollary}\label{cor:sobpxlinfty}
Let $\Omega=(\alpha,\beta)$ where $-\infty<\alpha<\beta<\infty$. The Orlicz-Sobolev space $W^{1,\varphi}$ contains an isomorphic subspace to $\ell^\infty$ if and only if $\varphi$ does not satisfy condition $\Delta_2^\infty$.
 \end{corollary}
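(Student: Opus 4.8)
The plan is to derive the corollary by specializing Theorems \ref{th:ellinftycopyI} and \ref{th:ellinftycopy} to the parameter-free $MO$ function $\Phi(x,t)=\varphi(t)$ on $\Omega=(\alpha,\beta)$, and then to translate the $MO$-condition $\Delta_2$ into its finite-measure form $\Delta_2^\infty$. First I would record that $\Omega$ carries a finite, non-atomic Lebesgue measure, and that every Orlicz function $\varphi$, regarded as a $MO$ function, satisfies condition {\rm (V)}: since $\varphi$ is finite-valued, $\int_\alpha^\beta \varphi(b)\,dx=(\beta-\alpha)\varphi(b)<\infty$ for every $b>0$, while $\varphi$ being convex, nonzero and vanishing at $0$ forces $\varphi(a)>0$ for all sufficiently large $a$, hence $\essinf_{x\in\Omega}\varphi(a)=\varphi(a)>0$ (this is the same observation underlying Corollary \ref{cor:voltera-orlicz}). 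Finally I would invoke \eqref{ineq:deltatwo}: because $\mu$ is finite and non-atomic, $\varphi$ satisfies the $MO$-condition $\Delta_2$ if and only if it satisfies $\Delta_2^\infty$.

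For the forward implication, assume $W^{1,\varphi}$ contains an isomorphic copy of $\ell^\infty$. Theorem \ref{th:ellinftycopyI}, applied to $\Phi(x,t)=\varphi(t)$, yields that $\varphi$ does not satisfy $\Delta_2$ as an $MO$ function, and hence, by \eqref{ineq:deltatwo}, $\varphi$ does not satisfy $\Delta_2^\infty$. For the converse, assume $\varphi$ does not satisfy $\Delta_2^\infty$; then by \eqref{ineq:deltatwo} it fails $\Delta_2$ as an $MO$ function. Since $\varphi$ satisfies {\rm (V)} by the first paragraph, Theorem \ref{th:ellinftycopy} applies and produces an isomorphic copy of $\ell^\infty$ inside $W^{1,\varphi}$. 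Combining the two implications completes the proof.

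There is essentially no genuine obstacle here, the statement being a direct corollary; the only points requiring care are the two bookkeeping steps: verifying that an arbitrary finite-valued Orlicz function over a bounded interval meets condition {\rm (V)}, and citing the equivalence between $\Delta_2$ and $\Delta_2^\infty$ valid for finite non-atomic measures, so that the conclusion is phrased in terms of $\Delta_2^\infty$ as in \eqref{ineq:deltatwo} rather than the $L^1$-perturbed $\Delta_2$ of the general theory.
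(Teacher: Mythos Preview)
Your proposal is correct and follows essentially the same approach as the paper: the paper simply remarks that since condition {\rm (V)} is always satisfied for an Orlicz function on a bounded interval, the corollary is immediate from Theorems \ref{th:ellinftycopyI} and \ref{th:ellinftycopy}. You have merely made explicit the two bookkeeping points (verification of {\rm (V)} and the translation between $\Delta_2$ and $\Delta_2^\infty$) that the paper leaves implicit.
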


  \begin{corollary}\label{cor:sobpxlinfty}
Let $\Omega=(\alpha,\beta)$ where $-\infty<\alpha<\beta<\infty$. The variable exponent Sobolev space $W^{1,p(\cdot)}$ contains an isomorphic subspace to $\ell^\infty$ if and only if $p^+ = \infty$.
 \end{corollary}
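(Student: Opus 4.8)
The plan is to obtain this corollary purely from results already established for Musielak--Orlicz--Sobolev spaces, specialized to the variable exponent $MO$ function $\Phi(x,t) = t^{p(x)}/p(x)$, for which $W^{1,\Phi} = W^{1,p(\cdot)}$. The ingredients I will combine are Theorem \ref{th:ellinftycopyI} (a copy of $\ell^\infty$ in $W^{1,\Phi}$ forces $\Phi\notin\Delta_2$), Theorem \ref{th:ellinftycopy} (under condition {\rm(V)}, failure of $\Delta_2$ yields a copy of $\ell^\infty$), and Theorem \ref{th:deltalpx} (for the variable exponent function, $\Phi\in\Delta_2 \iff p^+<\infty$).

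First I would check that the variable exponent $MO$ function satisfies condition {\rm(V)} on $(\alpha,\beta)$. This is immediate and is precisely the observation already used in the proof of Corollary \ref{cor:voltera-lp}: since $\Phi(x,1) = 1/p(x) \le 1$ we get $\int_\alpha^\beta \Phi(x,1)\,dx \le \beta-\alpha < \infty$; and since $\Phi^*(x,1) = 1/q(x) \le 1$ for a.a.\ $x$ (with the convention $1/q(x)=0$ when $p(x)=1$), Lemma \ref{lem:111} gives $\essinf_{x\in\Omega}\Phi(x,a)>0$ for some $a>0$. Hence Theorem \ref{th:ellinftycopy} is applicable to $\Phi$.

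For the ``if'' direction, suppose $p^+ = \infty$. By Theorem \ref{th:deltalpx}, $\Phi$ does not satisfy $\Delta_2$, and since {\rm(V)} holds, Theorem \ref{th:ellinftycopy} produces an isomorphic copy of $\ell^\infty$ in $W^{1,p(\cdot)}$. For the ``only if'' direction, suppose $W^{1,p(\cdot)}$ contains a subspace isomorphic to $\ell^\infty$. Then Theorem \ref{th:ellinftycopyI} forces $\Phi\notin\Delta_2$, and Theorem \ref{th:deltalpx} then gives $p^+=\infty$.

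I do not expect a genuine obstacle here, since all the substance sits in the preceding theorems; the only point deserving a line of care is the degenerate case where $p(x)=1$ on a set of positive measure, in which the complementary function attains the value $+\infty$. This is harmless: Lemma \ref{lem:111} only requires $\esssup_{x\in\Omega}\Phi^*(x,c)<\infty$ for some $c>0$, and $\Phi^*(x,1) = 1/q(x) \le 1$ throughout, so condition {\rm(V)} --- and hence the hypothesis of Theorem \ref{th:ellinftycopy} --- remains valid.
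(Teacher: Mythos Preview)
Your proof is correct and follows essentially the same route as the paper's: the paper simply cites Corollary \ref{cor:voltera-lp} for condition {\rm(V)} and then invokes Theorems \ref{th:ellinftycopyI} and \ref{th:ellinftycopy}, leaving the translation via Theorem \ref{th:deltalpx} implicit. Your version unpacks the verification of {\rm(V)} and makes the use of Theorem \ref{th:deltalpx} explicit, but the argument is the same.
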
 

\begin{proof}
By Corollary \ref{cor:voltera-lp}, the variable exponent function  satisfies condition ${\rm (V)}$. Thus the proof is an immediate consequence of Theorems \ref{th:ellinftycopyI} and \ref{th:ellinftycopy}. 

\end{proof}

The next result follows from Proposition \ref{th:deltadobphase}.

   \begin{corollary}\label{cor:sobdphlinfty}
Let $\Omega=(\alpha,\beta)$ where $-\infty<\alpha<\beta<\infty$. Let the  Sobolev space $W^{1,\Phi}$ be induced by the double phase functional 
\[
\Phi(x,t) = t^{p(x)} + a(x) t^{r(x)}
\]
with $a,p,r$ measurable functions on $\Omega$, such that  $a(x) > 0$  and $p(x )< r(x)$ for a.a. $x\in\Omega$. 

Then $W^{1,\Phi}$ contains an isomorphic subspace to $\ell^\infty$ if and only if $p^+ = \infty$ or $r^+ = \infty$.
 \end{corollary}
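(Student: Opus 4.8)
The plan is to read off both directions directly from the general $W^{1,\Phi}$ results above together with the $\Delta_2$-characterization of double phase functionals, Theorem \ref{th:deltadobphase}. The bridge I will use is this: because $a(x)>0$ a.e., the support $\Omega_1$ of $a$ equals $\Omega$, and the standing hypothesis $p(x)<r(x)$ a.e.\ on $\Omega$ holds in particular a.e.\ on $\Omega_1$; hence Theorem \ref{th:deltadobphase}(iii) applies and says that $\Phi$ fails $\Delta_2$ \emph{precisely} when $p^+=\infty$ or $r^+=\infty$.

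For the necessity, I would argue: if $W^{1,\Phi}$ contains an isomorphic copy of $\ell^\infty$, then $\Phi\notin\Delta_2$ by Theorem \ref{th:ellinftycopyI}, and so $p^+=\infty$ or $r^+=\infty$ by the bridge above. This half needs nothing about the size of $a$.

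For the sufficiency, suppose $p^+=\infty$ or $r^+=\infty$; then $\Phi\notin\Delta_2$ by the bridge, and I want to quote Theorem \ref{th:ellinftycopy}, whose only extra input is condition (V). The second clause of (V) comes for free: $\essinf_{x\in\Omega}\Phi(x,1)=\essinf_{x\in\Omega}\bigl(1+a(x)\bigr)\ge 1>0$. For the first clause, $\int_\alpha^\beta\Phi(x,b)\,dx<\infty$ for some $b>0$, I would note that this is exactly the integrability condition that Corollary \ref{cor:volteralpq} extracts from $a\in L^1(\alpha,\beta)$ (take $b=1$), and that by Theorem \ref{th:12} it is equivalent to the boundedness of the Volterra operator on $L^\Phi$ — which is what the proof of Theorem \ref{th:ellinftycopy} actually uses. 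Granting (V), Theorem \ref{th:ellinftycopy} produces a copy of $\ell^\infty$ inside $W^{1,\Phi}$, completing the proof.

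The only step that is not mere book-keeping is the verification of (V) in the sufficiency part, i.e.\ the level-$b$ integrability of the double phase functional; this is immediate when $a\in L^1$, but is a genuine condition on the joint behavior of $a$ and $r$ near the endpoints of $(\alpha,\beta)$ in the general case, and is where I expect the real work (or an extra hypothesis) to be needed. Everything else is a direct combination of Theorems \ref{th:ellinftycopyI}, \ref{th:ellinftycopy} and \ref{th:deltadobphase}(iii).
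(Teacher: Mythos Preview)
Your approach is exactly the paper's: the paper's entire proof is the one-line remark ``follows from Proposition~\ref{th:deltadobphase}'', which, unpacked, is precisely your combination of Theorems~\ref{th:ellinftycopyI}, \ref{th:ellinftycopy} and~\ref{th:deltadobphase}(iii). Your careful flag about condition~(V) is well taken and is not addressed by the paper either: the sufficiency direction via Theorem~\ref{th:ellinftycopy} does require $\int_\alpha^\beta \Phi(x,b)\,dx<\infty$ for some $b>0$, and for the double phase functional this is not automatic from the stated hypotheses (it follows, for instance, from $a\in L^1(\alpha,\beta)$ as in Corollary~\ref{cor:volteralpq}, or more generally from integrability of $a(\cdot)\,b^{r(\cdot)}$ for some $b\in(0,1)$). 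So your proof is correct and matches the paper's, and your caveat in fact identifies an implicit extra hypothesis that the corollary, as stated, appears to need.
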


 \section{Copy of $\ell^1$}\label{ section 4.4}
 
In this section we will characterize the spaces $W^{1,\Phi}$ that contain an isomorphic copy of $\ell^1$. First we need to recall an analogous result for $MO$ spaces $L^\Phi$.

 \begin{theorem}\label{th:liMO}
 Let $(\Omega,\Sigma,\mu)$ be a non-atomic and separable measure space.  A $MO$ space $L^\Phi$ contains an isomorphic copy of $\ell^1$ if and only if $\Phi$ or $\Phi^*$ do not satisfy condition $\Delta_2$.
\end{theorem}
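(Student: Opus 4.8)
The plan is to prove both implications by reducing them to known structural dichotomies for $MO$ spaces. For the ``if'' direction, suppose first that $\Phi\notin\Delta_2$. Then by Theorem \ref{th:linftyMO}, $L^\Phi$ contains an isomorphic copy of $\ell^\infty$, and since $\ell^\infty$ contains an isometric copy of $\ell^1$, so does $L^\Phi$. Now suppose instead that $\Phi\in\Delta_2$ but $\Phi^*\notin\Delta_2$. By Theorem \ref{th:ordconMO}, $\Phi\in\Delta_2$ means $L^\Phi$ is order continuous, so $(L^\Phi)_a=(L^\Phi)_b=L^\Phi$; combined with the Fatou property this gives $(L^\Phi)^*\simeq (L^\Phi)'=(L^{\Phi^*},\|\cdot\|_{\Phi^*}^0)$ by Theorem \ref{th:funcMO}(i). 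Since $\Phi^*\notin\Delta_2$, Theorem \ref{th:linftyMO} shows $L^{\Phi^*}$ contains an isomorphic copy of $\ell^\infty$, hence so does $(L^\Phi)^*$. A Banach space whose dual contains $\ell^\infty$ must itself contain $\ell^1$ (indeed a dual space is complemented in its bidual, and a classical argument—e.g.\ via Rosenthal's $\ell^1$ theorem together with the fact that $L^\Phi$ is weakly sequentially complete when order continuous and $\Delta_2$-regular—forces an isomorphic copy of $\ell^1$ in $L^\Phi$); alternatively, an order continuous $MO$ space that is not reflexive must contain $c_0$ or $\ell^1$ isomorphically by general Banach lattice theory, and ruling out $c_0$ using order continuity + Fatou (which makes $L^\Phi$ weakly sequentially complete, hence $c_0$-free) leaves $\ell^1$.

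For the ``only if'' direction, assume both $\Phi$ and $\Phi^*$ satisfy $\Delta_2$. Then by Theorem \ref{th:funcMO}(iii), $L^\Phi$ is reflexive. A reflexive Banach space cannot contain an isomorphic copy of $\ell^1$ (since $\ell^1$ is not reflexive and reflexivity passes to closed subspaces). Hence $L^\Phi$ contains no copy of $\ell^1$, which is the contrapositive of what we want.

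I expect the main obstacle to be making the ``$\Phi\in\Delta_2$, $\Phi^*\notin\Delta_2\Rightarrow L^\Phi\supset\ell^1$'' step fully rigorous rather than gesturing at folklore. The cleanest route is probably: (1) order continuity of $L^\Phi$ plus the Fatou property imply $L^\Phi$ is weakly sequentially complete, hence contains no copy of $c_0$; (2) by the $\Delta_2$-regularity and Theorem \ref{th:sepMO}, $L^\Phi$ is separable, so its dual being nonseparable (which it is, since $L^{\Phi^*}\supset\ell^\infty$ when $\Phi^*\notin\Delta_2$) forces $L^\Phi$ to contain $\ell^1$ by a theorem going back to the structure of separable spaces with nonseparable dual (such a space must contain $\ell^1$—this is a consequence of Rosenthal's $\ell^1$ theorem: if it did not, every bounded sequence would have a weakly Cauchy subsequence, and combined with weak sequential completeness every bounded sequence would have a weakly convergent subsequence, making $L^\Phi$ reflexive, contradicting nonseparability of the dual). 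This chain, assembled from results already quoted in the excerpt plus standard Banach space theory, yields the claim without ever needing an explicit construction of the $\ell^1$-basis.

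\begin{remark}
If one prefers a self-contained constructive proof of the forward direction, the natural alternative is to use the failure of $\Delta_2$ for $\Phi^*$ to produce, via Proposition \ref{pr:ellinftyMO} applied to $\Phi^*$, a disjointly supported sequence in $L^{\Phi^*}$ witnessing non-order-continuity, and then dualize: these functionals restricted to the corresponding disjoint band of $L^\Phi$ behave like the $\ell^1$-summing functionals, and the disjointness forces the unit vectors they are dual to, when normalized in $L^\Phi$, to span $\ell^1$ isomorphically. This is more hands-on but requires careful modular estimates; the soft argument above is shorter.
\end{remark}
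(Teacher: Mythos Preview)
Your proposal is correct, and for the converse direction and for the case $\Phi\notin\Delta_2$ it coincides with the paper's argument. The difference lies in how you handle the case $\Phi^*\notin\Delta_2$.

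The paper does not split off the subcase $\Phi\in\Delta_2$. It uses the general dual decomposition $(L^\Phi)^*\simeq L^{\Phi^*}\oplus (L^\Phi)_s$ from Theorem~\ref{th:funcMO}(ii), observes that $L^{\Phi^*}$ (hence $(L^\Phi)^*$) contains $\ell^\infty$ by Theorem~\ref{th:linftyMO}, and then invokes a single classical fact, \cite[Proposition~2.e.8]{LT1}: if $X^*$ contains a copy of $c_0$ (in particular of $\ell^\infty$), then $X$ contains a complemented copy of $\ell^1$. This is a one-line finish and requires nothing about order continuity or weak sequential completeness of $L^\Phi$.

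Your route assembles more machinery: you pass through order continuity (from $\Phi\in\Delta_2$), invoke weak sequential completeness of order continuous Banach lattices, and then combine Rosenthal's $\ell^1$ theorem with Eberlein--\v{S}mulian to force reflexivity in the absence of $\ell^1$, reaching a contradiction. This is valid, and it is good that you flagged the ``separable with nonseparable dual'' heuristic as insufficient on its own (indeed it fails in general, e.g.\ for the James tree space), salvaging it via weak sequential completeness. What you gain is a self-contained argument from first principles of Banach lattice and Banach space theory; what you pay is that you import several nontrivial external results (weak sequential completeness of order continuous lattices, Rosenthal's dichotomy) where the paper cites one. The paper's approach is shorter and avoids the case split altogether.
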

\begin{proof}
If $\Phi$ does not satisfy condition $\Delta_2$ then $L^\Phi$ contains isomorphically $\ell^\infty$ by Theorem \ref{th:linftyMO},  and so $\ell^1$ is an isomorphic subspace of $L^\Phi$ \cite[Corollary 6.8.]{car}.
If $\Phi^*$ does not satisfy $\Delta_2$ then $L^{\Phi^*}$ contains an isomorphic subspace of $\ell^\infty$ again by Theorem  \ref{th:linftyMO}.  Thus applying Theorem \ref{th:funcMO}, the dual space $(L^\Phi)^* \simeq L^{\Phi^*} \oplus (L^\Phi)_s$ must also contain an isomorphic copy of $\ell^\infty$. Finally by general result in Banach spaces \cite[Proposition 2.e.8]{LT1}, the space $L^\Phi$ must contain a subspace isomorphic to $\ell^1$.

  If $L^\Phi$ contains a subspace isomorphic to $\ell^1$, then it can not be reflexive, and so by Theorem \ref{th:Kothe}, $\Phi$ or $\Phi^*$ does not satisfy $\Delta_2$.

\end{proof}

\begin{corollary}\label{cor:l1lpx}
 Let $(\Omega,\Sigma,\mu)$ be a non-atomic and separable measure space. The variable exponent Lebesgue space $L^{p(\cdot)}$ contains an isomorphic subspace to $\ell^1$ if and only if $p^+ = \infty$ or $p^- = 1$. 
 \end{corollary}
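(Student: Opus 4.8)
The plan is to deduce this corollary from Theorem \ref{th:liMO} by translating the two $\Delta_2$ conditions into arithmetic conditions on $p^+$ and $p^-$. For the variable exponent Musielak--Orlicz function $\Phi(x,t)=t^{p(x)}/p(x)$ with $1\le p(x)<\infty$ a.e., Theorem \ref{th:deltalpx} already gives precisely the two facts I need: $\Phi$ satisfies $\Delta_2$ if and only if $p^+<\infty$, and the complementary function $\Phi^*$ satisfies $\Delta_2$ if and only if $p^->1$. Also, by hypothesis the measure space is non-atomic and separable, so Theorem \ref{th:liMO} applies verbatim to $L^\Phi=L^{p(\cdot)}$.

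First I would invoke Theorem \ref{th:liMO}: $L^{p(\cdot)}$ contains an isomorphic copy of $\ell^1$ if and only if $\Phi$ or $\Phi^*$ fails condition $\Delta_2$. Next I would negate each of the two equivalences from Theorem \ref{th:deltalpx}: ``$\Phi\notin\Delta_2$'' is equivalent to ``$p^+=\infty$'', and ``$\Phi^*\notin\Delta_2$'' is equivalent to ``$p^-=1$'' (here one uses that $1\le p^-\le p(x)$ a.e., so the only alternative to $p^->1$ is $p^-=1$). Combining, ``$\Phi\notin\Delta_2$ or $\Phi^*\notin\Delta_2$'' is exactly ``$p^+=\infty$ or $p^-=1$'', which is the claimed characterization. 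One small technical caveat: when $p^-=1$ it may happen that $p(x)=1$ a.e., in which case $L^{p(\cdot)}=L^1$ trivially contains $\ell^1$, so the statement is consistent; otherwise the complementary function is $\Phi^*(x,t)=t^{q(x)}/q(x)$ with $q(x)=p(x)/(p(x)-1)$ defined a.e. where $p(x)>1$, and Theorem \ref{th:deltalpx} handles this case directly.

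There is essentially no obstacle here: the corollary is a routine specialization, and all the real work has already been done in Theorems \ref{th:liMO} and \ref{th:deltalpx}. The only point requiring a moment's care is making sure the non-atomic, separable hypotheses on $(\Omega,\Sigma,\mu)$ are passed correctly into Theorem \ref{th:liMO}, and that the reduction of ``not $p^->1$'' to ``$p^-=1$'' uses the standing assumption $p(x)\ge 1$ a.e. I would therefore write the proof as a two-line argument: apply Theorem \ref{th:liMO}, then substitute the characterizations of $\Delta_2$ for $\Phi$ and $\Phi^*$ from Theorem \ref{th:deltalpx}, and conclude.

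\begin{proof}
Since $(\Omega,\Sigma,\mu)$ is non-atomic and separable, Theorem \ref{th:liMO} shows that $L^{p(\cdot)}=L^\Phi$, with $\Phi(x,t)=t^{p(x)}/p(x)$, contains an isomorphic copy of $\ell^1$ if and only if $\Phi$ or $\Phi^*$ does not satisfy condition $\Delta_2$. By Theorem \ref{th:deltalpx}, $\Phi$ satisfies $\Delta_2$ if and only if $p^+<\infty$, and $\Phi^*$ satisfies $\Delta_2$ if and only if $p^->1$. Since $p(x)\ge 1$ a.e., the negation of $p^->1$ is $p^-=1$. Hence $L^{p(\cdot)}$ contains an isomorphic subspace to $\ell^1$ if and only if $p^+=\infty$ or $p^-=1$.
\end{proof}
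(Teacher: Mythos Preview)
Your proposal is correct and follows exactly the same approach as the paper: the paper's proof simply states that the result follows from Theorems \ref{th:deltalpx} and \ref{th:liMO}, which is precisely what you do, with a bit of added detail on the negations.
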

 \begin{proof}
 The proof follows from Theorems \ref{th:deltalpx} and \ref{th:liMO}.

\end{proof}

  \begin{theorem}\label{th:copyl1nec}
Let $\Omega=(\alpha,\beta)$ where $-\infty<\alpha<\beta<\infty$.  If a $MO$ function $\Phi$ and its conjugate $\Phi^*$ satisfy condition $\Delta_2$ then $W^{1,\Phi}$ does not contain an isomorphic subspace of $\ell^1$.
 
 \end{theorem}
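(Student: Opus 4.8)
The plan is to reduce the statement to the corresponding result for the Musielak–Orlicz space $L^\Phi$ (Theorem \ref{th:liMO}) by exhibiting $W^{1,\Phi}$ as an isomorphic subspace of a Musielak–Orlicz space on which both the generating function and its conjugate satisfy $\Delta_2$. Concretely, as in the proof of Theorem \ref{th:ellinftycopyI}, I would embed $W^{1,\Phi}$ isometrically into $L^\Phi\times L^\Phi$ via $f\mapsto (f,f')$, and then identify $L^\Phi\times L^\Phi$ with the Musielak–Orlicz space $L^{\overline{\Phi}}(\Omega\times\{1,2\})$, where $\overline{\Phi}(x,y,t)=\Phi(x,t)\chi_{\{1\}}(y)+\Phi(x,t)\chi_{\{2\}}(y)$. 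That identification (a linear isomorphism, with the norm bounds already computed in the proof of Theorem \ref{th:ellinftycopyI}) is something I would invoke rather than redo.

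The key step is then to check that if $\Phi\in\Delta_2$ and $\Phi^*\in\Delta_2$, then $\overline{\Phi}\in\Delta_2$ and $(\overline{\Phi})^*\in\Delta_2$. The first of these is exactly the computation at the end of the proof of Theorem \ref{th:ellinftycopyI}: from $\Phi(x,2t)\le C\Phi(x,t)+h(x)$ one gets $\overline{\Phi}(x,y,2t)\le C\overline{\Phi}(x,y,t)+H(x,y)$ with $H(x,y)=h(x)(\chi_{\{1\}}(y)+\chi_{\{2\}}(y))\in L^1(\Omega\times\{1,2\})$. For the conjugate, I would compute $(\overline{\Phi})^*(x,y,t)=\Phi^*(x,t)\chi_{\{1\}}(y)+\Phi^*(x,t)\chi_{\{2\}}(y)$ directly from the definition of the complementary function (the supremum in each fibre $\{y\}$ decouples, so it is literally $\Phi^*$ in each fibre), and then repeat the same $\Delta_2$ argument using $\Phi^*(x,2t)\le C^*\Phi^*(x,t)+h^*(x)$ with $h^*\in L^1$.

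Once $\overline{\Phi}$ and $(\overline{\Phi})^*$ both satisfy $\Delta_2$, Theorem \ref{th:liMO} (with the non-atomic, separable measure space $\Omega\times\{1,2\}$, separability of which follows since $\Omega=(\alpha,\beta)$ is separable and, e.g., Theorem \ref{th:sepMO}, or directly) gives that $L^{\overline{\Phi}}(\Omega\times\{1,2\})$ contains no isomorphic copy of $\ell^1$. Since $W^{1,\Phi}$ is isomorphic to a subspace of $L^{\overline{\Phi}}(\Omega\times\{1,2\})$, and being isomorphic to $\ell^1$ passes to subspaces, $W^{1,\Phi}$ contains no isomorphic copy of $\ell^1$ either. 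I expect the only mildly delicate point to be the identity $(\overline{\Phi})^*=\overline{\Phi^*}$ and making sure the separability/non-atomicity hypotheses of Theorem \ref{th:liMO} are met for $\Omega\times\{1,2\}$; both are routine but worth stating carefully.
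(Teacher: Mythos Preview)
Your argument is correct, but the paper takes a much shorter route. Rather than passing to $L^{\overline{\Phi}}(\Omega\times\{1,2\})$ and invoking Theorem~\ref{th:liMO}, the paper simply observes that $\Phi,\Phi^*\in\Delta_2$ makes $L^\Phi$ reflexive (Theorem~\ref{th:funcMO}\,(iii)), hence $L^\Phi\times L^\Phi$ is reflexive, hence its closed subspace $W^{1,\Phi}$ is reflexive, and a reflexive space cannot contain an isomorphic copy of $\ell^1$. Your detour through $\overline{\Phi}$ and $(\overline{\Phi})^*=\overline{\Phi^*}$ works (and the non-atomicity and separability of $\Omega\times\{1,2\}$ are indeed routine), but notice that the direction of Theorem~\ref{th:liMO} you are using is itself proved via reflexivity; so you are effectively reproving the paper's three-line argument with extra packaging. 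The one thing your approach buys is uniformity of method with Theorem~\ref{th:ellinftycopyI}, at the cost of verifying $(\overline{\Phi})^*\in\Delta_2$ and the measure-theoretic hypotheses; the paper's approach buys brevity and avoids that bookkeeping entirely.
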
  

\begin{proof}
If both $\Phi$ and $\Phi^*$ satisfy condition $\Delta_2$ then the space $L^\Phi$ is reflexive by  Theorem \ref{th:Kothe}. Hence $L^\Phi \times L^\Phi$ equipped with norm $\|(\cdot,\cdot\cdot)\|_{L^\Phi\times L^\Phi}=\|\cdot\|_{\Phi}+\|\cdot\cdot\|_{\Phi}$ is reflexive and so $W^{1,\Phi}$ as its closed subspace is reflexive too.  Therefore it can not contain a subspace $\ell^1$.

\end{proof}

\begin{theorem}\label{th:copyl1suf2}
Let $\Omega=(\alpha,\beta)$ where $-\infty<\alpha<\beta<\infty$. Let $\Phi$ be $MO$ function satisfying condition ${\rm (V)}$.  If $W^{1,\Phi}$ does not contain a subspace isomorphic to $\ell^1$ then both $\Phi$ and $\Phi^*$ satisfy $\Delta_2$.
\end{theorem}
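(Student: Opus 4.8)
The plan is to argue by contraposition: assuming that $\Phi$ or $\Phi^*$ fails $\Delta_2$, I would construct an isomorphic copy of $\ell^1$ inside $W^{1,\Phi}$. Split into two cases according to which function violates $\Delta_2$.

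\textbf{Case 1: $\Phi \notin \Delta_2$.} Here Theorem \ref{th:ellinftycopy} (which uses exactly condition (V)) already gives that $W^{1,\Phi}$ contains an isomorphic copy of $\ell^\infty$. Then by the classical fact that any Banach space containing $\ell^\infty$ isomorphically also contains $\ell^1$ isomorphically (\cite[Corollary 6.8]{car}, used the same way in the proof of Theorem \ref{th:liMO}), we are done.

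\textbf{Case 2: $\Phi \in \Delta_2$ but $\Phi^* \notin \Delta_2$.} This is the substantive case. Since $\Phi^* \notin \Delta_2$, Theorem \ref{th:linftyMO} gives that $L^{\Phi^*}$ contains an isomorphic copy of $\ell^\infty$; equivalently, $L^{\Phi^*}$ is not separable. The strategy is to transfer this non-separability to $(W^{1,\Phi})^*$, and then invoke the general Banach-space result \cite[Proposition 2.e.8]{LT1} that if a Banach space has a subspace isomorphic to $\ell^\infty$ in its dual, then the space itself contains $\ell^1$ isomorphically. To identify the dual of $W^{1,\Phi}$: as in the proof of Theorem \ref{th:ellinftycopyI}, $W^{1,\Phi}$ is isometric to the subspace $A = \{(f,f') : f\in W^{1,\Phi}\}$ of $L^\Phi\times L^\Phi$. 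Since $\Phi\in\Delta_2$, the space $L^\Phi$ is order continuous (Theorem \ref{th:ordconMO}), hence $(L^\Phi)^* \simeq L^{\Phi^*}$ by Theorem \ref{th:funcMO}(ii) (the singular part vanishes), so $(L^\Phi\times L^\Phi)^* \simeq L^{\Phi^*}\times L^{\Phi^*}$, which contains $\ell^\infty$. It remains to see that $(W^{1,\Phi})^*$ inherits a copy of $\ell^\infty$. One clean way: the boundedness of the Volterra operator (Theorem \ref{th:12}, available by (V)) shows $W^{1,\Phi}$ is isomorphic to the full product $L^\Phi\times L^\Phi$ via $f\mapsto (f,f')$ with inverse $(u,v)\mapsto Au + c$ suitably interpreted — more precisely the map $f\mapsto f'$ together with the boundary value gives an isomorphism of $W^{1,\Phi}$ onto $L^\Phi\times\R$ (or onto a complemented subspace of $L^\Phi\times L^\Phi$). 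Either way $(W^{1,\Phi})^*$ is isomorphic to a space containing $(L^{\Phi^*})^* \supset \ell^\infty$ as a complemented (hence isomorphic) subspace, so $(W^{1,\Phi})^*\supset\ell^\infty$ isomorphically, and then \cite[Proposition 2.e.8]{LT1} yields $\ell^1\hookrightarrow W^{1,\Phi}$.

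\textbf{Main obstacle.} The delicate point is Case 2: carefully establishing that a copy of $\ell^\infty$ in $(L^\Phi)^*\simeq L^{\Phi^*}$ survives in $(W^{1,\Phi})^*$. The cleanest route is to exhibit $W^{1,\Phi}$ as isomorphic (via $f\mapsto(f,f')$ and the bounded Volterra inverse from Theorem \ref{th:12}) to the product $L^\Phi\times L^\Phi$, or at least to a subspace of it that is complemented — the complementation is what guarantees the dual copy of $\ell^\infty$ transfers. One should check that the natural projection (integrate the derivative component and add the constant from the first coordinate) is bounded, which again is precisely the content of condition (V) via Theorem \ref{th:12}. Once the isomorphism $W^{1,\Phi}\simeq L^\Phi\times L^\Phi$ (or the complemented embedding) is in hand, everything else is a direct citation of Theorems \ref{th:linftyMO}, \ref{th:funcMO}, \ref{th:ordconMO} and the two Banach-space lemmas from \cite{car,LT1}, exactly mirroring the proof of Theorem \ref{th:liMO}.
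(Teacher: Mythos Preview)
Your Case~1 matches the paper. Your Case~2 takes a genuinely different (and more abstract) route than the paper's direct construction. The paper, assuming only $\Phi^*\notin\Delta_2$, applies Proposition~\ref{pr:ellinftyMO} to $\Phi^*$ to obtain disjointly supported $f_k\in L^{\Phi^*}$ with $\|f_k\|_{\Phi^*}=1$, picks nearly-norming $g_k\in L^\Phi$ with $\esssupp g_k\subset\esssupp f_k$, sets $h_k(x)=\int_\alpha^x g_k$, and verifies by hand (via disjointness and H\"older) that $\{h_k\}$ is equivalent to the $\ell^1$-basis in $W^{1,\Phi}$; the Volterra bound enters only to ensure $h_k\in L^\Phi$. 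Your approach is correct in outline, but the passage through the dual is an unnecessary detour: once the Volterra operator $V:L^\Phi\to W^{1,\Phi}$, $Vg(x)=\int_\alpha^x g$, is bounded (Theorem~\ref{th:12}), the estimate $\|g\|_\Phi\le\|Vg\|_{1,\Phi}=\|Vg\|_\Phi+\|g\|_\Phi\le(1+l)\|g\|_\Phi$ already shows $V$ is an isomorphic \emph{embedding} of $L^\Phi$ into $W^{1,\Phi}$, and then Theorem~\ref{th:liMO} gives $\ell^1\hookrightarrow L^\Phi\hookrightarrow W^{1,\Phi}$ directly --- this handles both cases at once and needs neither the hypothesis $\Phi\in\Delta_2$ nor any dual identification. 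Two small points to fix in your write-up: ``$(L^{\Phi^*})^*\supset\ell^\infty$'' should read ``$(L^\Phi)^*\simeq L^{\Phi^*}\supset\ell^\infty$''; and having $W^{1,\Phi}$ merely \emph{complemented in} $L^\Phi\times L^\Phi$ would not by itself place the $\ell^\infty$-copy of the big dual inside $(W^{1,\Phi})^*$ --- you need the full isomorphism $W^{1,\Phi}\simeq L^\Phi\times\R$ (which you do state, and which indeed holds under (V)), or more simply just the embedding $L^\Phi\hookrightarrow W^{1,\Phi}$ above.
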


\begin{proof}  Let first $\Phi$ do not satisfy $\Delta_2$. By Theorem \ref{th:ellinftycopy}, $\ell^\infty$ is an isomorphic copy in $W^{1,\Phi}$. Thus $\ell^1$ is contained isomorphically in $\ell^\infty$ by \cite[Corollary 6.8.]{car} and so in $W^{1,\Phi}$.

  Assume now that $\Phi^*$ does not satisfy $\Delta_2$. Then in view of Proposition \ref{pr:ellinftyMO}, there exists a sequence of bounded and non-negative  functions $\{f_k\} \subset L^{\Phi^*}$ such that $f_k\wedge f_i = 0$, $k\ne i$, and 
\begin{equation}\label{eq:15}
\left\|\sum_{k=1}^\infty f_k\right\|_{\Phi^*} = \|f_k\|_{\Phi^*}=1, \ \ \ k\in\N.
\end{equation}
Hence
\begin{equation}\label{eq:16}
1 = \|f_k\|_{\Phi^*} \le \|f_k\|_{\Phi^*}^0 \le 2 \|f_k\|_{\Phi^*} = 2, \ \ \ k\in\N.
\end{equation}
By the definition of the Orlicz norm $\|\cdot\|_{\Phi^*}^0 $ for $\epsilon>0$, each $k\in\N$, there exists a non-negative, bounded function $g_k\in L^\Phi$  such that  $\esssupp{g_k} \subset \esssupp{f_k}$, $I_\Phi(g_k) \le 1$ and
\begin{equation}\label{eq:161}
\|f_k\|_{\Phi^*}^0 \le \frac{\epsilon}{2^k} + \int_\Omega f_k(x)\,g_k(x)\,dx.
\end{equation}
Define
\[
h_k(x) = \int_\alpha^x g_k(y)\,dy, \ \ \ x\in(\alpha,\beta), \ k\in\N.
\]
Thus its derivative $h_k' = g_k \in L^\Phi$. Moreover, by the boundedness of the Voltera operator $\|h_k\|_\Phi \le l \|g_k\|_\Phi \le l$, which implies that $h_k$ also belongs to $L^\Phi$. 

  Clearly for every $k\in\N$,
\[
\|h_k\|_{1,\Phi} = \|g_k\|_\Phi + \|h_k\|_\Phi \le 1+l.
\]
Hence  for all $a = (a_k)\in\ell^1$, $m\in\N$,
\[
\left\|\sum_{k=1}^m a_k\,h_k\right\|_{1,\Phi}
\le \sum_{k=1}^m |a_k|\, \|h_k\|_{1,\Phi} \le (1+l) \|a\|_1.
\] 
On the other hand,
\[
\left\|\sum_{k=1}^m a_k\,h_k\right\|_{1,\Phi} =
\left\|\sum_{k=1}^m a_k\,h_k\right\|_\Phi + \left\|\sum_{k=1}^m a_k\,h_k'\right\|_\Phi
\ge \left\|\sum_{k=1}^m a_k\,g_k\right\|_\Phi,
\]
and by H\"older's inequality and by (\ref{eq:15}), (\ref{eq:16}),
\[
\frac12 \int_\Omega \left(\sum_{k=1}^m a_k\, g_k(x)\right) \,\left(\sum_{k=1}^m (sign\,a_k)\,f_k(x)\right)\, dx\le \frac12 \left\|\sum_{k=1}^m a_k\,g_k\right\|_\Phi 2 \left\|\sum_{k=1}^m f_k\right\|_{\Phi^*} \le \left\|\sum_{k=1}^m a_k\,g_k \right\|_\Phi.
\]
Therefore in view of $\esssupp g_k \subset \esssupp f_k$ and of that $f_k$ are disjoint and of (\ref{eq:16}),  (\ref{eq:161}),   we get for every $m\in\mathbb{N}$,
\begin{align*}
\left\|\sum_{k=1}^m a_k\,h_k\right\|_{1,\Phi} &\ge 
\frac12 \int_\Omega \left(\sum_{k=1}^m a_k\, g_k(x)\right) \,\left(\sum_{k=1}^m (sign\,a_k)\,f_k(x)\right)\, dx
= \frac12 \int_\Omega \sum_{k=1}^m |a_k|\, f_k(x)\,g_k(x)\,dx\\ 
&= \frac12 \sum_{k=1}^m |a_k|\, \int_\Omega f_k(x)\,g_k(x)\,dx \ge \frac12\,\sum_{k=1}^m |a_k|\,\left (\|f_k\|^0_{\Phi^*} -\frac{\epsilon}{2^k}\right) \\
&\ge \frac12 \left( \sum_{k=1}^m |a_k| - \sum_{k=1}^m \frac{\epsilon}{2^k}\right)\ge \frac12\left(\sum_{k=1}^m |a_k|-  \epsilon\right).
\end{align*}
Since $\epsilon>0$ and $m\in\mathbb{N}$ were arbitrary, combining the above inequalities we get
\[
\frac12 \|a\|_1 \le \left\|\sum_{k=1}^\infty a_k\,h_k\right\|_{1,\Phi} \le (1+l) \|a\|_1,
\]
which shows that $W^{1,\Phi}$ contains an isomorphic subspace of $\ell^1$ and completes the proof.

\end{proof}

In view of (V)  satisfied in any Orlicz space the next result is an instant corollary of Theorems  \ref{th:copyl1nec} and \ref{th:copyl1suf2}.

\begin{corollary}\label{cor:l1Lorlicz}
Let $\Omega=(\alpha,\beta)$ where $-\infty<\alpha<\beta<\infty$, and $\varphi$ be an Orlicz function.  Then the  space $W^{1,\varphi}$ does not contain isomorphic copy of $\ell^1$ if and only if $\varphi$ and $\varphi^*$ satisfy condition $\Delta_2^\infty$. 
\end{corollary}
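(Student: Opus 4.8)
The plan is to deduce the statement directly from Theorems~\ref{th:copyl1nec} and~\ref{th:copyl1suf2}, once we observe that an Orlicz function $\varphi$ (hence the $MO$ function $\Phi(x,t)=\varphi(t)$) satisfies condition {\rm (V)} on the finite interval $(\alpha,\beta)$. This was essentially recorded just before Corollary~\ref{cor:voltera-orlicz}, and the argument is short: since $\varphi$ is finite-valued and independent of $x$, $\int_\alpha^\beta \varphi(b)\,dx = (\beta-\alpha)\varphi(b)<\infty$ for every $b>0$; and since $\varphi\not\equiv 0$ there is $a>0$ with $\varphi(a)>0$, so $\essinf_{x\in\Omega}\varphi(a)=\varphi(a)>0$. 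Thus {\rm (V)} holds.

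For the ``if'' direction I would assume that $\varphi$ and $\varphi^*$ satisfy $\Delta_2^\infty$. Since the Lebesgue measure on $(\alpha,\beta)$ is finite and non-atomic, the characterization~\eqref{ineq:deltatwo} of $\Delta_2$ for Orlicz functions shows that $\varphi$ and $\varphi^*$ then satisfy condition $\Delta_2$ as $MO$ functions, and Theorem~\ref{th:copyl1nec} gives that $W^{1,\varphi}$ contains no isomorphic copy of $\ell^1$. For the converse, suppose $W^{1,\varphi}$ has no subspace isomorphic to $\ell^1$. Since $\varphi$ satisfies {\rm (V)}, Theorem~\ref{th:copyl1suf2} applies and yields that both $\varphi$ and $\varphi^*$ satisfy $\Delta_2$ (in the $MO$ sense); invoking~\eqref{ineq:deltatwo} once more, both satisfy $\Delta_2^\infty$.

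The one point that needs a word of care is the passage between the $MO$ form of $\Delta_2$ and $\Delta_2^\infty$ for the complementary function $\varphi^*$, which a priori is only an Orlicz function with extended values. If $\varphi^*$ is finite-valued, the cited equivalence applies verbatim; if instead $b_{\varphi^*}=\sup\{t:\varphi^*(t)<\infty\}<\infty$, then for any $u$ with $b_{\varphi^*}/2<u<b_{\varphi^*}$ one has $\varphi^*(2u)=\infty$ while $\varphi^*(u)<\infty$, so $\varphi^*$ satisfies neither $\Delta_2$ nor $\Delta_2^\infty$, and the two-way implication above remains consistent. I do not expect any genuine obstacle: the substance is already in Theorems~\ref{th:copyl1nec} and~\ref{th:copyl1suf2} and in the boundedness of the Voltera operator on $L^\varphi$.
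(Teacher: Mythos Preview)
Your proof is correct and follows essentially the same route as the paper, which simply notes that condition {\rm (V)} is satisfied in any Orlicz space over $(\alpha,\beta)$ and invokes Theorems~\ref{th:copyl1nec} and~\ref{th:copyl1suf2}. Your version is in fact more careful, supplying the explicit verification of {\rm (V)} and the passage between $\Delta_2$ and $\Delta_2^\infty$ (including the extended-valued case for $\varphi^*$), details the paper leaves implicit.
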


\begin{corollary}\label{cor:l1Lpx}
Let $\Omega=(\alpha,\beta)$ where $-\infty<\alpha<\beta<\infty$. The space $W^{1,p(\cdot)}$ does not contain isomorphic copy of $\ell^1$ if and only if $1<p^- \le p^+<\infty$. 
\end{corollary}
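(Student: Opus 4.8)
The plan is to deduce this purely from the machinery already assembled in Sections 2 and 4, with no new computation required. The key point is that the statement is the "variable exponent" specialization of the two-sided characterization provided by Theorems \ref{th:copyl1nec} and \ref{th:copyl1suf2}, once one knows (a) that the variable exponent $MO$ function $\Phi(x,t)=t^{p(x)}/p(x)$ always satisfies condition {\rm (V)}, and (b) the exact $\Delta_2$ bookkeeping for $\Phi$ and $\Phi^*$ in terms of $p^+$ and $p^-$.

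First I would record that $\Phi$ satisfies condition {\rm (V)}: this is exactly Corollary \ref{cor:voltera-lp} (or its underlying observation that $\Phi(x,1)=1/p(x)\le 1$ and $\essinf_x\Phi(x,1)>0$, so Lemma \ref{lem:111} and Theorem \ref{th:12} apply). Next I would invoke Theorem \ref{th:deltalpx}: $\Phi$ satisfies $\Delta_2$ if and only if $p^+<\infty$, and $\Phi^*$ satisfies $\Delta_2$ if and only if $p^->1$. Thus the condition "both $\Phi$ and $\Phi^*$ satisfy $\Delta_2$" is literally "$1<p^-\le p^+<\infty$" (the inequality $p^-\le p^+$ being automatic).

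The two implications then follow directly. If $1<p^-\le p^+<\infty$, then $\Phi,\Phi^*\in\Delta_2$, so by Theorem \ref{th:copyl1nec} the space $W^{1,\Phi}=W^{1,p(\cdot)}$ contains no isomorphic copy of $\ell^1$. Conversely, since $\Phi$ satisfies {\rm (V)}, Theorem \ref{th:copyl1suf2} tells us that if $W^{1,p(\cdot)}$ contains no isomorphic copy of $\ell^1$, then both $\Phi$ and $\Phi^*$ satisfy $\Delta_2$, i.e.\ $1<p^-\le p^+<\infty$. Combining the two gives the asserted equivalence.

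There is essentially no hard step here; the "obstacle" is purely organizational, namely making sure the hypothesis of Theorem \ref{th:copyl1suf2} — condition {\rm (V)} — is verified in this setting, which is precisely what Corollary \ref{cor:voltera-lp} supplies. I would present the proof in two or three lines, citing Corollary \ref{cor:voltera-lp}, Theorem \ref{th:deltalpx}, and Theorems \ref{th:copyl1nec} and \ref{th:copyl1suf2}.
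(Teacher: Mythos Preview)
Your proposal is correct and follows exactly the paper's own proof: cite Corollary~\ref{cor:voltera-lp} to get condition~{\rm (V)}, then combine Theorem~\ref{th:deltalpx} with Theorems~\ref{th:copyl1nec} and~\ref{th:copyl1suf2}. One small inaccuracy in your parenthetical aside: the verification of~{\rm (V)} does not use $\essinf_x \Phi(x,1)>0$ (which can fail when $p^+=\infty$), but rather $\esssup_x \Phi^*(x,1)\le 1$ together with Lemma~\ref{lem:111}; this does not affect your argument since you correctly cite Corollary~\ref{cor:voltera-lp}.
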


\begin{proof}
 We observe first that the Voltera operator is bounded on $L^{p(\cdot)}$ by Corollary \ref{cor:voltera-lp}.  Therefore the conclusion follows by  Theorem \ref{th:deltalpx} and  Theorems \ref{th:copyl1nec} and \ref{th:copyl1suf2}.

\end{proof}


   \begin{corollary}\label{cor:sobdphlinfty}
Let $\Omega=(\alpha,\beta)$ where $-\infty<\alpha<\beta<\infty$. Let the  Sobolev space $W^{1,\Phi}$ be induced by the double phase function 
\[
\Phi(x,t) = t^{p(x)} + a(x) t^{r(x)}
\]
with $a,p,r$ measurable functions on $\Omega$, such that  $a(x) \ge 0$  and $p(x )\le r(x)$ for a.a. $x\in\Omega$. 
If $r^+ < \infty$ and $p^->1$ then 
 $W^{1,\Phi}$ does not contain an isomorphic subspace to $\ell^1$.
\end{corollary}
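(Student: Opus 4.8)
The plan is to reduce this statement to the already-established sufficient condition for the absence of $\ell^1$ in a Sobolev space, namely Theorem~\ref{th:copyl1nec}, which asserts that if both the $MO$ function $\Phi$ and its complementary function $\Phi^*$ satisfy condition $\Delta_2$, then $W^{1,\Phi}$ contains no isomorphic copy of $\ell^1$. Thus the entire task is to verify that the double phase functional $\Phi(x,t)=t^{p(x)}+a(x)t^{r(x)}$ and its conjugate $\Phi^*$ both satisfy $\Delta_2$ under the hypotheses $r^+<\infty$ and $p^->1$; this we already have from Theorem~\ref{th:deltadobphase}.

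First I would check that the standing assumptions of Theorem~\ref{th:deltadobphase} are in force here: $a(x)\ge 0$ a.e.\ and $p(x)\le r(x)$ a.e.\ are part of the hypotheses, the bound $p^->1$ in particular forces $1\le p(x)$ a.e., and $r^+<\infty$ forces $r(x)<\infty$ a.e.. Then part~(i) of Theorem~\ref{th:deltadobphase}, applied with $r^+<\infty$, gives that $\Phi$ satisfies $\Delta_2$, and part~(ii), applied with $p^->1$, gives that $\Phi^*$ satisfies $\Delta_2$.

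With both $\Delta_2$ conditions established, I would then invoke Theorem~\ref{th:copyl1nec} directly to conclude that $W^{1,\Phi}$ does not contain an isomorphic subspace of $\ell^1$, which is the desired assertion. There is no genuine obstacle here; the only point requiring a little care is the bookkeeping of the hypotheses of Theorem~\ref{th:deltadobphase}, and in particular noticing that part~(iii), with its additional requirement that $p(x)<r(x)$ on $\supp a$, is \emph{not} needed, since parts~(i) and~(ii) already supply the two $\Delta_2$ conditions that feed into Theorem~\ref{th:copyl1nec}.
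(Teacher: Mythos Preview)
Your proposal is correct and follows essentially the same route as the paper: verify $\Phi\in\Delta_2$ via Theorem~\ref{th:deltadobphase}(i) using $r^+<\infty$, verify $\Phi^*\in\Delta_2$ via Theorem~\ref{th:deltadobphase}(ii) using $p^->1$, and then conclude by Theorem~\ref{th:copyl1nec}.

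One small point of comparison: the paper's proof additionally invokes Corollary~\ref{cor:volteralpq} to obtain condition~(V) and then cites both Theorems~\ref{th:copyl1nec} and~\ref{th:copyl1suf2} to phrase the criterion as an ``if and only if''. Your observation that only the sufficient direction (Theorem~\ref{th:copyl1nec}) is needed here is well taken, and in fact makes your argument slightly cleaner: Theorem~\ref{th:copyl1nec} requires no hypothesis on the Volterra operator, so you avoid any appeal to condition~(V) altogether. This is a minor streamlining rather than a different method, but it is a genuine simplification---particularly since Corollary~\ref{cor:volteralpq} carries the extra assumption $a\in L^1$, which is not listed among the hypotheses of the present corollary.
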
 

\begin{proof}
The Voltera operator is bounded on $L^\Phi$ by Corollary \ref{cor:volteralpq}.  Therefore by Theorems \ref{th:copyl1nec} and \ref{th:copyl1suf2}, $W^{1,\Phi}$  does not contain an isomorphic subspace to $\ell^1$ if and only if  $\Phi$ and $\Phi^*$ satisfy $\Delta_2$.   On the other hand in view of
Proposition \ref{th:deltadobphase} (i), by the assumption that $r^+ < \infty$, $\Phi$ satisfies condition $\Delta_2$, and by the assumption $p^- > 1$, $\Phi^*$ satisfies $\Delta_2$, and the conclusion follows.

\end{proof}

\section{Reflexivity of $W^{1,\Phi}$}\label{section 4.5}

\begin{theorem}\label{th:ref}
Let $\Omega=(\alpha,\beta)$, where $-\infty<\alpha<\beta<\infty$,  and $\Phi$ be a $MO$ function. 

If both $\Phi$ and $\Phi^*$ satisfy condition $\Delta_2$ then $W^{1,\Phi}$ is reflexive.
 
  Let $\Phi$ be $MO$ function  satisfying condition ${\rm (V)}$.   If 
the space $W^{1,\Phi}$ is reflexive then both $\Phi$ and $\Phi^*$ satisfy condition $\Delta_2$. 
\end{theorem}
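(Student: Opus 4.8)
The plan is to obtain both implications as short consequences of results already established, since the substantive work was done in the earlier sections.

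For the first implication, I would start from Theorem~\ref{th:funcMO}(iii): the assumption that $\Phi$ and $\Phi^*$ both satisfy $\Delta_2$ makes $L^\Phi$ reflexive. Hence $L^\Phi\times L^\Phi$, equipped with the norm $\|(u,v)\|_{L^\Phi\times L^\Phi}=\|u\|_\Phi+\|v\|_\Phi$, is reflexive as a finite product of reflexive spaces. As observed in the proof of Theorem~\ref{th:ellinftycopyI}, the map $f\mapsto(f,f')$ is an isometric isomorphism of $W^{1,\Phi}$ onto the subspace $A=\{(f,f'):f\in W^{1,\Phi}\}$ of $L^\Phi\times L^\Phi$, and $A$ is closed because $W^{1,\Phi}$ is complete (as recalled in the introduction). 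A closed subspace of a reflexive Banach space is reflexive, so $W^{1,\Phi}$ is reflexive.

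For the converse I would argue by contraposition through the $\ell^1$-analysis of the previous section. A Banach space that contains an isomorphic copy of $\ell^1$ cannot be reflexive, since $\ell^1$ is not reflexive and reflexivity is inherited by closed subspaces. Thus, if $W^{1,\Phi}$ is reflexive it contains no isomorphic copy of $\ell^1$, and then Theorem~\ref{th:copyl1suf2} applies: under condition~{\rm(V)}, the absence of an isomorphic copy of $\ell^1$ in $W^{1,\Phi}$ forces both $\Phi$ and $\Phi^*$ to satisfy $\Delta_2$, which is exactly what is claimed.

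I do not expect a real obstacle here; both directions are essentially corollaries of machinery already in place. The two points needing a line of care are the closedness of $A$ (immediate from completeness of $W^{1,\Phi}$) and the fact that condition~{\rm(V)} is used only in the converse direction, entering through the boundedness of the Voltera operator that underlies Theorem~\ref{th:copyl1suf2}. One could alternatively run the converse via Theorems~\ref{th:ellinftycopyI} and~\ref{th:ellinftycopy} together with the standard fact that a space containing an isomorphic copy of $\ell^\infty$ also contains one of $\ell^1$, but the route through Theorem~\ref{th:copyl1suf2} is the most direct.
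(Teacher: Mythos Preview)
Your proposal is correct and follows essentially the same approach as the paper. The only minor difference is that for the converse the paper splits the argument in two---using Theorem~\ref{th:ellinftycopy} (no $\ell^\infty$) to get $\Phi\in\Delta_2$ and Theorem~\ref{th:copyl1suf2} (no $\ell^1$) to get $\Phi^*\in\Delta_2$---whereas you invoke Theorem~\ref{th:copyl1suf2} once to obtain both conditions simultaneously, which is slightly more economical since that theorem already covers both cases.
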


\begin{proof}
   If both $\Phi$ and $\Phi^*$ satisfy $\Delta_2$ then the $MO$ space $L^\Phi$ is reflexive by Theorem \ref{th:funcMO} (iii), and so is $W^{1,\Phi}$.

  Let assume now condition (V) that  the Voltera operator is bounded on $L^\Phi$. If $W^{1,\Phi}$ is reflexive then it can not contain isomorphic copy of $\ell^\infty$.  Therefore by Theorem \ref{th:ellinftycopy}, $\Phi$ satisfies $\Delta_2$. Similarly $W^{1,\Phi}$  can not contain an isomorphic copy of $\ell^1$, and thus 
 by Theorem \ref{th:copyl1suf2}, $\Phi^*$  also satisfies $\Delta_2$.
 
\end{proof}

Since (V) is fulfilled in any Orlicz space, so we have the following corollary.

\begin{corollary}\label{cor:l1Lorlicz}
Let $\Omega=(\alpha,\beta)$ where $-\infty<\alpha<\beta<\infty$, and $\varphi$ be an Orlicz function.  Then the  space $W^{1,\varphi}$ is reflexive if and only if $\varphi$ and $\varphi^*$ satisfy condition $\Delta_2^\infty$. 
\end{corollary}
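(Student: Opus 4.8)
The plan is to specialize Theorem~\ref{th:ref}: once we know that every Orlicz function satisfies condition~(V) on a bounded interval, the biconditional contained in that theorem reads ``$W^{1,\varphi}$ is reflexive if and only if $\varphi$ and $\varphi^*$ satisfy $\Delta_2$'', and it then only remains to rewrite $\Delta_2$ as $\Delta_2^\infty$ using the equivalence recorded right after \eqref{ineq:deltatwo}.

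First I would check condition~(V) for $\varphi$ on $\Omega=(\alpha,\beta)$ equipped with the Lebesgue measure. Since $\varphi$ is finite on all of $[0,\infty)$, for every $b>0$ we have $\int_\alpha^\beta\varphi(b)\,dx=(\beta-\alpha)\varphi(b)<\infty$, which is the integrability half of (V). For the remaining half, $\varphi$ is convex, left continuous, vanishes at $0$ and is not identically zero, so $\varphi(a)>0$ for some $a>0$; as $\varphi$ does not depend on $x$, $\essinf_{x\in\Omega}\varphi(a)=\varphi(a)>0$. Hence (V) holds (equivalently the Voltera operator is bounded on $L^\varphi$; see Theorem~\ref{th:12} and Corollary~\ref{cor:voltera-orlicz}), and both implications of Theorem~\ref{th:ref} apply: $W^{1,\varphi}$ is reflexive if and only if $\varphi$ and $\varphi^*$ both satisfy condition~$\Delta_2$.

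The last step converts $\Delta_2$ into $\Delta_2^\infty$. The Lebesgue measure on the finite interval $(\alpha,\beta)$ is non-atomic, so by the equivalence stated after \eqref{ineq:deltatwo} an Orlicz function $\psi$ satisfies $\Delta_2$ exactly when $\psi(2u)\le k\psi(u)$ holds for all $u$ beyond some $u_0$, that is, when $\psi$ satisfies $\Delta_2^\infty$. Applying this with $\psi=\varphi$ and with $\psi=\varphi^*$ gives the claim. The only point to watch is that $\varphi^*$ may a priori be extended-valued; but satisfying $\Delta_2$ is a finiteness (i.e.\ $MO$-function) condition, so in the non-degenerate case $\varphi^*$ is a genuine Orlicz function and the quoted equivalence applies to it verbatim, while in the extended-valued case neither reflexivity of $W^{1,\varphi}$ nor $\Delta_2^\infty$ of $\varphi^*$ holds, so the stated biconditional persists.

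Each step above is either a one-line verification or a direct appeal to an already-established result, so I do not expect a genuine obstacle; the only delicate point is the minor bookkeeping around a possibly extended-valued complementary function $\varphi^*$ just mentioned.
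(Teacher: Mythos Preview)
Your proposal is correct and follows exactly the paper's own route: the paper simply notes that condition (V) is fulfilled in any Orlicz space on a bounded interval and then invokes Theorem~\ref{th:ref}, whereas you spell out both the verification of (V) and the passage from $\Delta_2$ to $\Delta_2^\infty$. The extra care you take with a possibly extended-valued $\varphi^*$ is harmless bookkeeping not present in the paper, but it does not change the argument.
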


\begin{corollary}\label{cor:l1Lpx}
Let $\Omega=(\alpha,\beta)$ where $-\infty<\alpha<\beta<\infty$. The space $W^{1,p(\cdot)}$ is reflexive if and only if $1<p^- \le p^+<\infty$. 
\end{corollary}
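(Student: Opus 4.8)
The plan is to deduce this from the abstract reflexivity criterion of Theorem~\ref{th:ref} by feeding in the two facts already available in the variable exponent setting: that condition~{\rm (V)} always holds here, and that the $\Delta_2$ properties of $\Phi$ and $\Phi^*$ are governed by $p^+$ and $p^-$.

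First I would record that the variable exponent $MO$ function $\Phi(x,t)=t^{p(x)}/p(x)$ satisfies condition~{\rm (V)}. Indeed $\Phi(x,1)=1/p(x)\le 1$ for a.a.\ $x$, so $\int_\alpha^\beta \Phi(x,1)\,dx\le\beta-\alpha<\infty$, and $\essinf_{x}\Phi(x,1)>0$ since $p(x)<\infty$ a.e.\ (this is exactly what is checked in the proof of Corollary~\ref{cor:voltera-lp}, which then gives boundedness of the Voltera operator on $L^{p(\cdot)}$). Hence the hypothesis of the necessity half of Theorem~\ref{th:ref} is met, and that theorem yields the equivalence: $W^{1,p(\cdot)}$ is reflexive if and only if both $\Phi$ and $\Phi^*$ satisfy $\Delta_2$ (the sufficiency half of Theorem~\ref{th:ref} needs no extra hypothesis).

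It remains to translate ``$\Phi\in\Delta_2$ and $\Phi^*\in\Delta_2$'' into a condition on $p$, which is precisely Theorem~\ref{th:deltalpx}: $\Phi$ satisfies $\Delta_2$ iff $p^+<\infty$, and $\Phi^*$ satisfies $\Delta_2$ iff $p^->1$. Combining the two characterizations, $W^{1,p(\cdot)}$ is reflexive iff $p^+<\infty$ and $p^->1$, i.e.\ iff $1<p^-\le p^+<\infty$, which is the assertion.

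I do not anticipate any genuine difficulty: the corollary is a bookkeeping argument chaining Corollary~\ref{cor:voltera-lp}, Theorem~\ref{th:ref}, and Theorem~\ref{th:deltalpx}. The only point worth a sentence of care is that the ``only if'' direction relies on condition~{\rm (V)}, so Corollary~\ref{cor:voltera-lp} must be invoked before Theorem~\ref{th:ref} can be applied in that direction; the ``if'' direction is unconditional.
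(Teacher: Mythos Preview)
Your proposal is correct and follows essentially the same route as the paper: invoke Corollary~\ref{cor:voltera-lp} to secure condition~{\rm (V)}, then apply Theorem~\ref{th:ref} and translate via Theorem~\ref{th:deltalpx}. One small correction: the claim ``$\essinf_x\Phi(x,1)>0$ since $p(x)<\infty$ a.e.'' is not valid as stated (when $p^+=\infty$ one can have $\essinf_x 1/p(x)=0$ even though $p(x)<\infty$ a.e.); the paper's proof of Corollary~\ref{cor:voltera-lp} instead verifies $\esssup_x\Phi^*(x,1)\le 1$ and obtains the required infimum bound through Lemma~\ref{lem:111}. Since you ultimately rely on Corollary~\ref{cor:voltera-lp} itself, this does not affect the argument.
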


\begin{proof}
 We observe first that the Voltera operator is bounded on $L^{p(\cdot)}$ by Corollary \ref{cor:voltera-lp}.  Therefore the conclusion follows by  Theorem \ref{th:ref}.
\end{proof}

\section{Uniform convexity of $W^{1,\Phi}$}\label{section 4.6}

  The concept of uniform convexity of a Banach space $X$ was first introduced by James A. Clarkson in 1936 where the author showed the uniform convexity of $\ell^p$ and $L^p$ spaces for $1<p<\infty$. Since then many authors had studied this property in other instances of Banach spaces  as  Orlicz or Musielak-Orlicz spaces \cite{Chen, Kam1982, H1983}.

  Let $(X,\|\cdot\|)$ be a Banach space equipped with the norm $\|\cdot\|$. We denote by $S(X)$ and $B(X)$ the unit sphere and unit ball of $X$ respectively. We say the $X$ is {\it strictly convex}  whenever $\|\frac{x+y}{2}\| < 1$ for any $x,y \in S(X)$, $x\ne y$. Recall that $X$ is {\it uniformly convex} (for short $UC$) if 
\[
\forall \epsilon \in (0,1)\ \exists \delta > 0\ \forall x,y\in B(X)\ \|x-y\| >\epsilon \Rightarrow \left\|\frac{x+y}{2}\right\| < 1-\delta.
\]

It is not difficult to show that equivalently $X$ is $UC$ whenever for any sequences $\{x_n\}, \{y_n\} \subset B(X) , \|x_n + y_n \| \rightarrow 2$ implies $\|x_n - y_n\| \rightarrow 0$ as $n\to\infty$.

A $MO$ function $\Phi$ is called {\it strictly convex} if for a.a. $x\in\Omega$, the function $t\mapsto \Phi(x,t)$ is strictly convex on $\mathbb{R}_+$, that is 
\begin{align*}
\exists A\subset\Omega,\ \mu(A) = 0\  \forall x\in \Omega\setminus A ,\forall u\ne v \in \R_+ \ \forall \lambda\in (0,1)\\ \Phi(x, \lambda u + (1-\lambda)v) < \lambda\Phi(x,u) + (1-\lambda) \Phi(x,v).
\end{align*}
Following \cite{Chen, H1983} we say that a $MO$ function $\Phi$ is {\it uniformly convex} if
\begin{align}\label{def:UC}
&\forall \epsilon > 0 \ \exists \delta>0 \ \exists \ 0\le f\in L^0 ,\ \int_\Omega \Phi(x, f(x))\,dx \le \epsilon,\\ \notag
 &\text{and whenever} \ \ \forall u,v \ge 0\ \forall a.a.\ x\in\Omega \ |u-v| \ge \epsilon \max\{u,v\} \ \ \text{and} \ \ \ |u-v| \ge f(x), \\ \notag
&\text{then}\ \ \Phi\left(x, \frac{u+v}{2}\right) \le \frac{1-\delta}{2} (\Phi(x,u) + \Phi(x,v)).
\end{align}

  It is not difficult to show that $\Phi$ is uniformly convex if and only if for every $\epsilon > 0$,
\begin{equation}\label{eq:unconv}
\lim_{c\to 0} \int_\Omega \Phi(x,P_{\epsilon, c}(x))\, dx = 0,
\end{equation}
where 
\begin{align}\label{eq:unconv2}
P_{\epsilon, c}(x) &= \sup\{u-v: (u,v) \in E_{\epsilon, c, x}\}, \ \ \text{with}\\\notag
E_{\epsilon, c, x} &= \{(u,v): u,v \ge 0, |u-v| > \epsilon \max\{u,v\}, \ \text{and}\\\notag
& \Phi\left(x,\frac{u+v}{2}\right) > \frac12  (1-c) (\Phi(x,u) + \Phi(x,v))\}.\notag
\end{align}

  First version of uniform convexity for an Orlicz function $\varphi$ was defined in the doctoral thesis of W.A.J. Luxemburg \cite{Lux}. He showed that $L^\varphi$ is uniformly convex if $\varphi$  satisfies condition $\Delta_2$ and is uniformly convex. Later in \cite{Kam1982}, three versions of uniform convexity of $\varphi$ have been introduced,  one for the case of non-atomic infinite measure, another one for non-atomic finite measure and  one for counting measure. It was also proved that appropriate $\Delta_2$ condition and uniform convexity of $\varphi$ are necessary and sufficient conditions of uniform convexity of $L^\varphi$.
Combining all three conditions  in the case of $MO$ function results in the present form (\ref{def:UC}) where  we have a function $f$ with small modular $I_\Phi(f) \le \epsilon$. If $f= 0$ a.e. on $\Omega$ then in fact the inequality defining uniform convexity of $\Phi$ is the same condition as in the original paper \cite{Lux}, uniform for every parameter $x\in \Omega$.

  The definition of uniform convexity  of $\Phi$ seems to be quite complicated, but in the next remark  is explained that this condition  not only implies  strict convexity of $\Phi$, but something more, a sort  of uniform strict convexity.  In fact it can be expressed by the uniform inequalities of  ratios of its derivatives. This was first observed in \cite{Ak} for  Orlicz functions,  and later in \cite{HK} for $MO$ functions.

\begin{remark}\label{rem:UC}
{\rm (1)}  It is standard to show that a uniformly convex function $\Phi$ is  strictly convex on $\R_+$ for a.e. $x\in\Omega$.

  {\rm (2)} \cite{HK} The condition for $\Phi$ being uniformly convex can be expressed in terms of its derivatives. Let $\Phi'(x,t)$ denote  the right derivative of $\Phi(x,t)$ with respect to $t>0$ for a.a. $x\in\Omega$. Then $\Phi$ is uniformly convex if  
for every $\epsilon > 0$ there exists a constant $k_\epsilon > 1$ such that
\begin{equation}\label{eq:UC}
 \Phi'(x,(1+\epsilon) t) \ge k_\epsilon \Phi'(x, t)
 \end{equation}
for a.a. $x\in\Omega$, every $t\ge 0$.
\end{remark}

  Characterization of $UC$ of $L^\Phi$ was considered in 
\cite {H1983} and later in \cite{Chen}.  Partial results on uniform convexity of $L^\Phi$ or $L^{p(\cdot)}$ have been given in Section 2.4 in \cite{DHHR}. Recall the complete characterization of $UC$ in $L^\Phi$.

\begin{theorem}\label{th:uniconmosp}\cite[Theorem 5.15]{Chen}
Let $(\Omega, \Sigma, \mu)$ be a non-atomic separable measure space. The MO space $(L^\Phi, \|\cdot\|_\Phi)$ is uniformly convex if and only if  $\Phi$ satisfies $\Delta_2$ and   $\Phi$ is uniformly convex.
\end{theorem}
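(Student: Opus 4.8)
The plan is to establish the two implications, both resting on condition $\Delta_2$, which on a non-atomic finite measure space makes the Luxemburg norm $\|\cdot\|_\Phi$ and the modular $I_\Phi$ comparable. For $\Phi\in\Delta_2$ I will repeatedly use Theorem~\ref{th:ordconMO} (so $L^\Phi$ is order continuous, $I_\Phi(u_n)\to0\iff\|u_n\|_\Phi\to0$, the modular is norm‑continuous, and $\|g\|_\Phi=1\iff I_\Phi(g)=1$) together with the elementary consequence of convexity and $\Delta_2$ that $I_\Phi((1+s)g)\le(1+s(K-1))I_\Phi(g)+s\|h\|_1$ for $s\in(0,1)$; combining these, for $\|g_n\|_\Phi\le1$ one gets $\|g_n\|_\Phi\to1\iff I_\Phi(g_n)\to1$. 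I will call this package the \emph{$\Delta_2$ dictionary}.

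\emph{Sufficiency.} Assume $\Phi\in\Delta_2$ and $\Phi$ uniformly convex; let $x_n,y_n\in B(L^\Phi)$ with $\|x_n+y_n\|_\Phi\to2$. From $\|x_n\|_\Phi,\|y_n\|_\Phi\le1$ one gets $\|x_n\|_\Phi,\|y_n\|_\Phi\to1$ and $\|\tfrac12(x_n+y_n)\|_\Phi\to1$, so by the $\Delta_2$ dictionary $I_\Phi(x_n)\to1$, $I_\Phi(y_n)\to1$, $I_\Phi(\tfrac{x_n+y_n}{2})\to1$, whence by convexity of $\Phi(x,\cdot)$ the convexity defect
\[
\Delta_n:=\int_\Omega\Big[\tfrac12\big(\Phi(x,|x_n(x)|)+\Phi(x,|y_n(x)|)\big)-\Phi\big(x,\tfrac{|x_n(x)+y_n(x)|}{2}\big)\Big]\,d\mu(x)\ \longrightarrow\ 0 .
\]
The goal is to deduce $I_\Phi(\tfrac12(x_n-y_n))\to0$, which by the $\Delta_2$ dictionary yields $\|x_n-y_n\|_\Phi\to0$, i.e.\ uniform convexity of $L^\Phi$. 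Fix $\epsilon\in(0,1)$ with the corresponding $\delta>0$ and $0\le f\in L^0$, $I_\Phi(f)\le\epsilon$, from \eqref{def:UC} (equivalently one works with \eqref{eq:unconv}). On the set where $|x_n-y_n|\le\epsilon\max(|x_n|,|y_n|)$ one has $\tfrac12|x_n-y_n|\le\epsilon\cdot\tfrac{|x_n|+|y_n|}{2}$ pointwise, so convexity bounds $\int\Phi(x,\tfrac12|x_n-y_n|)\,d\mu$ there by $\epsilon$; on the complement one shows that either the pointwise convexity defect is $\ge c_\epsilon\,\Phi(x,\tfrac12|x_n-y_n|)$ — this holds where uniform convexity of $\Phi$ applies, i.e.\ where $\big|\,|x_n|-|y_n|\,\big|$ is large relative to $\max(|x_n|,|y_n|)$ and $\ge f(x)$, and also where $x_n,y_n$ are far from parallel, using the parallelogram identity $|x_n-y_n|^2+|x_n+y_n|^2=2|x_n|^2+2|y_n|^2$ and the fact that $\Phi\in\Delta_2$ keeps ratios of $\Phi$ under control — so that part of the integral is $\le c_\epsilon^{-1}\Delta_n\to0$; or else the auxiliary function $f$ dominates $|x_n-y_n|$ up to a bounded factor, a contribution controlled via the smallness of $I_\Phi(f)$. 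Adding up, $\limsup_n I_\Phi(\tfrac12(x_n-y_n))\le C\epsilon$ with $C$ independent of $\epsilon$; letting $\epsilon\to0$ finishes. The detailed case analysis here is as in \cite[Theorem~5.15]{Chen}.

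\emph{Necessity.} Suppose $L^\Phi$ is uniformly convex. If $\Phi\notin\Delta_2$, take disjoint norm‑one $f_1,f_2$ with $\|f_1+f_2\|_\Phi=1$ from Proposition~\ref{pr:ellinftyMO}; then $a=f_1+f_2$ and $b=f_1-f_2$ lie on $S(L^\Phi)$ (disjointness gives $|f_1-f_2|=f_1+f_2$), while $\|a+b\|_\Phi=2\|f_1\|_\Phi=2$ and $\|a-b\|_\Phi=2\|f_2\|_\Phi=2$, impossible in a uniformly convex space; hence $\Phi\in\Delta_2$. If $\Phi$ were not uniformly convex, then by \eqref{eq:unconv}–\eqref{eq:unconv2} there is $\epsilon_0>0$ and, since $c\mapsto\int_\Omega\Phi(x,P_{\epsilon_0,c}(x))\,d\mu$ is nonincreasing as $c\downarrow0$ (and after an innocuous truncation to stay finite), a number $\eta>0$ with $\int_\Omega\Phi(x,P_{\epsilon_0,c}(x))\,d\mu\ge\eta$ for all $c$. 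For $c_n=1/n$, a measurable selection applied to $x\mapsto\{(u,v)\in E_{\epsilon_0,c_n,x}:u-v\ge P_{\epsilon_0,c_n}(x)-1/n\}$ produces measurable $u_n\ge v_n\ge0$ with $(u_n(x),v_n(x))\in E_{\epsilon_0,c_n,x}$ and $u_n-v_n\ge P_{\epsilon_0,c_n}-1/n$; using non‑atomicity, restrict to a set $A_n$ so that $w_n^+=u_n\chi_{A_n}$, $w_n^-=v_n\chi_{A_n}$ satisfy $I_\Phi(w_n^+)=1$, hence $\|w_n^+\|_\Phi=1$, $\|w_n^-\|_\Phi\le1$. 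The defining inequality of $E_{\epsilon_0,c_n}$ gives $I_\Phi(\tfrac12(w_n^++w_n^-))\ge\tfrac12(1-c_n)(I_\Phi(w_n^+)+I_\Phi(w_n^-))$, which with the $\Delta_2$ dictionary forces $\|\tfrac12(w_n^++w_n^-)\|_\Phi\to1$, while $u_n-v_n>\epsilon_0 u_n$ and the near‑affineness of $\Phi(x,\cdot)$ on $[v_n(x),u_n(x)]$ (together with a H\"older pairing against a normalized functional supported on $\operatorname{supp}f$-type sets) keep $I_\Phi(w_n^+-w_n^-)$, hence $\|w_n^+-w_n^-\|_\Phi$, bounded away from $0$. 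This contradicts uniform convexity of $L^\Phi$, so $\Phi$ is uniformly convex.

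\emph{Expected obstacle.} The technical heart is the last step: transporting the modular, pointwise failure \eqref{eq:unconv} of uniform convexity of $\Phi$ up to a norm‑level failure for $L^\Phi$ requires a measurable choice of the witnessing pairs $(u_n,v_n)$ and, more delicately, a normalization (the restriction to $A_n$) that simultaneously controls $\|w_n^\pm\|_\Phi$, drives $\|\tfrac12(w_n^++w_n^-)\|_\Phi\to1$, and keeps $\|w_n^+-w_n^-\|_\Phi$ away from $0$ — the inhomogeneity of $\Phi$ and the additive $L^1$‑term in $\Delta_2$ are what make this bookkeeping subtle. In the sufficiency direction the only real subtleties are the same modular‑to‑norm transfer and the case analysis accommodating sign‑changing or complex‑valued functions.
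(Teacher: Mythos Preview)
The paper does not supply its own proof of this theorem; it is quoted from \cite{Chen}. What the paper \emph{does} contain, inside the proof of Theorem~\ref{th:UC}, is a detailed construction of witnessing sequences in $L^\Phi$ when $\Phi\in\Delta_2$ but $\Phi$ is not uniformly convex --- exactly the step you flag as the ``expected obstacle'' --- and comparing your sketch to that construction exposes a genuine gap.

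In your necessity argument you select measurable $u_n\ge v_n\ge0$ with $(u_n(x),v_n(x))\in E_{\epsilon_0,c_n,x}$ and then restrict to a set $A_n$ so that $I_\Phi(w_n^+)=1$. But you never control $I_\Phi(w_n^-)$, and the defining inequality of $E_{\epsilon_0,c_n,x}$ only yields
\[
I_\Phi\Big(\tfrac12(w_n^++w_n^-)\Big)\ \ge\ \tfrac{1-c_n}{2}\big(1+I_\Phi(w_n^-)\big).
\]
If $I_\Phi(w_n^-)$ stays small --- and nothing in your construction prevents this, since $v_n\le(1-\epsilon_0)u_n$ everywhere and the modular of $v_n\chi_{A_n}$ may well be a tiny fraction of that of $u_n\chi_{A_n}$ --- the right‑hand side does not tend to $1$, so you cannot conclude $\|\tfrac12(w_n^++w_n^-)\|_\Phi\to1$. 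You acknowledge exactly this issue in your final paragraph, but the construction you give does not solve it.

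The paper's construction (inside Theorem~\ref{th:UC}) handles the normalization by a symmetrization trick you are missing. It first restricts to a set $E_k$ on which the modular of $u_k-v_k$ equals a \emph{fixed} positive $\gamma$ (securing the lower bound on $\|x_k-y_k\|_\Phi$ up front). It then partitions $E_k$ into $\Omega_k\cup\widetilde\Omega_k$ according to which of $\Phi(\cdot,u_k),\Phi(\cdot,v_k)$ is larger, bisects each half at the level of the signed integral $\int(\Phi(\cdot,u_k)-\Phi(\cdot,v_k))$, and \emph{swaps} $u_k$ and $v_k$ on one piece of each half; this produces $\hat x_k,\hat y_k$ with $|\hat x_k-\hat y_k|=|u_k-v_k|\chi_{E_k}$, $\tfrac12(\hat x_k+\hat y_k)=\tfrac12(u_k+v_k)\chi_{E_k}$, and --- crucially --- $I_\Phi(\hat x_k)=I_\Phi(\hat y_k)=:\beta_k\le1$. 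Finally a common constant piece $\sigma_k\chi_{G_k}$ on $\Omega\setminus E_k$ raises both modulars to exactly $1$. With $I_\Phi(x_k)=I_\Phi(y_k)=1$, the $E$-inequality immediately gives $I_\Phi(\tfrac12(x_k+y_k))\ge1-c_k\to1$, and the argument closes. Your one‑sided normalization cannot substitute for this symmetry; the swap is the missing idea.

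Your handling of the case $\Phi\notin\Delta_2$ (via $a=f_1+f_2$, $b=f_1-f_2$) is correct and cleaner than invoking the $\ell^\infty$ embedding, and your sufficiency sketch is along the right lines, though it remains a sketch.
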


\begin{lemma}\label{lem:unifconLpx}
Let $\Phi(x,t)= \frac{t^{p(x)}}{p(x)}$, $t\ge 0$, $1\le p(x) < \infty$ a.a.  $x\in\Omega$. If $p^{-}>1$  then $\Phi$ is uniformly convex.
\end{lemma}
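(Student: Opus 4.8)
The plan is to bypass the somewhat cumbersome definition \eqref{def:UC} and instead verify the derivative characterization of uniform convexity recorded in Remark \ref{rem:UC}(2): it suffices to produce, for each $\epsilon>0$, a constant $k_\epsilon>1$ such that
\[
\Phi'(x,(1+\epsilon)t)\ge k_\epsilon\,\Phi'(x,t)\qquad\text{for a.a. }x\in\Omega,\ \text{all }t\ge 0 .
\]

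First I would compute the right derivative explicitly. Since $p(x)\ge p^->1$ a.e., for a.a.\ $x$ the function $t\mapsto t^{p(x)}/p(x)$ is differentiable on $(0,\infty)$ with $\Phi'(x,t)=t^{p(x)-1}$, and $\Phi'(x,0)=0$ because $p(x)-1\ge p^--1>0$ forces $t^{p(x)-1}\to 0$ as $t\to 0^+$. Hence, for every $t\ge 0$,
\[
\Phi'(x,(1+\epsilon)t)=\big((1+\epsilon)t\big)^{p(x)-1}=(1+\epsilon)^{p(x)-1}\,t^{p(x)-1}=(1+\epsilon)^{p(x)-1}\,\Phi'(x,t).
\]
Next, since $1+\epsilon>1$ and $s\mapsto(1+\epsilon)^{s}$ is increasing, the inequality $p(x)-1\ge p^--1$ valid for a.a.\ $x$ yields $(1+\epsilon)^{p(x)-1}\ge(1+\epsilon)^{p^--1}$. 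Setting $k_\epsilon:=(1+\epsilon)^{p^--1}$ we get $k_\epsilon>1$ (as $p^--1>0$) and $\Phi'(x,(1+\epsilon)t)\ge k_\epsilon\,\Phi'(x,t)$ for a.a.\ $x\in\Omega$ and all $t\ge 0$. By Remark \ref{rem:UC}(2), $\Phi$ is uniformly convex.

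There is essentially no serious obstacle: the argument is a one-line computation once the derivative characterization is invoked. The only points deserving a word of care are the behaviour at $t=0$ and the validity of the formula $\Phi'(x,t)=t^{p(x)-1}$, both of which are immediate consequences of the hypothesis $p^->1$, which makes every exponent $p(x)-1$ strictly positive a.e.
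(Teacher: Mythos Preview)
Your argument is correct and follows exactly the same route as the paper: both proofs invoke the derivative criterion of Remark~\ref{rem:UC}(2) and verify it by the one-line computation $\Phi'(x,(1+\epsilon)t)=(1+\epsilon)^{p(x)-1}\Phi'(x,t)\ge(1+\epsilon)^{p^--1}\Phi'(x,t)$. (Your constant $k_\epsilon=(1+\epsilon)^{p^--1}$ is in fact the right one; the paper writes $(1+\epsilon)^{p^-}$, a harmless arithmetic slip.)
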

\begin{proof}
 Let $p^- >1$. Then for a.a. $x\in\Omega$,
\[
 \Phi'(x,(1+\epsilon) t)  = (1  + \epsilon)^{p(x)} t^{p(x)-1} \ge (1 + \epsilon)^{p^-} t^{p(x)-1} = k_\epsilon \Phi'(x, t),
 \]
where $k_\epsilon = (1 + \epsilon)^{p^-} > 1$. Thus we showed (\ref{eq:UC}), and $\Phi$ is uniformly convex.

\end{proof}

\begin{lemma}\label{unifconv}
Let $\Phi: \Omega\times \mathbb{R}_+ \to \mathbb{R}_+$ be a double phase functional, that is 
\begin{equation}\label{eq:dblfun}
\Phi(x,t) = t^{p(x)} + a(x) t^{r(x)},
\end{equation}
where $a,p,r$ are measurable functions on $\Omega$,  $a(x) \ge 0$ a.e. and $1\le p(x) \le r(x) <\infty$ a.e.. If $p^->1$ then $\Phi$ is uniformly convex.  
\end{lemma}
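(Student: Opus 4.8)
The plan is to verify the derivative criterion for uniform convexity recorded in Remark \ref{rem:UC}(2), exactly as was done for variable exponent functions in Lemma \ref{lem:unifconLpx}. That is, I will exhibit, for each $\epsilon>0$, a constant $k_\epsilon>1$ such that $\Phi'(x,(1+\epsilon)t)\ge k_\epsilon\,\Phi'(x,t)$ for a.a.\ $x\in\Omega$ and every $t\ge 0$, where $\Phi'(x,\cdot)$ denotes the right derivative.

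First I would compute the right derivative of the double phase functional: for a.a.\ $x\in\Omega$ and $t\ge 0$,
\[
\Phi'(x,t)=p(x)\,t^{p(x)-1}+a(x)\,r(x)\,t^{r(x)-1}.
\]
Then, for $\epsilon>0$,
\[
\Phi'(x,(1+\epsilon)t)=p(x)(1+\epsilon)^{p(x)-1}t^{p(x)-1}+a(x)r(x)(1+\epsilon)^{r(x)-1}t^{r(x)-1}.
\]
The key step is to bound the two factors $(1+\epsilon)^{p(x)-1}$ and $(1+\epsilon)^{r(x)-1}$ from below uniformly in $x$. Since $p^->1$ we have $p(x)-1\ge p^--1>0$ a.e., and since $r(x)\ge p(x)$ a.e.\ we also have $r(x)-1\ge p(x)-1\ge p^--1>0$ a.e.; hence both factors are $\ge(1+\epsilon)^{p^--1}$. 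Substituting gives
\[
\Phi'(x,(1+\epsilon)t)\ge (1+\epsilon)^{p^--1}\bigl(p(x)t^{p(x)-1}+a(x)r(x)t^{r(x)-1}\bigr)=(1+\epsilon)^{p^--1}\,\Phi'(x,t),
\]
so the criterion (\ref{eq:UC}) holds with $k_\epsilon=(1+\epsilon)^{p^--1}$, which is strictly greater than $1$ precisely because $p^->1$ and $\epsilon>0$. By Remark \ref{rem:UC}(2), $\Phi$ is uniformly convex.

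There is essentially no real obstacle here: the argument is a direct computation, and the only point requiring the hypothesis is the uniform lower bound on the exponents, which is exactly where $p^->1$ (together with $p(x)\le r(x)$) enters. One minor care is the handling of $t=0$ and of the set where $a(x)=0$, but in both cases the displayed inequality is trivially valid, so the conclusion stands.
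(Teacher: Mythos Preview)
Your proof is correct and follows essentially the same approach as the paper's own proof: both verify the derivative criterion \eqref{eq:UC} from Remark~\ref{rem:UC}(2) by computing $\Phi'(x,(1+\epsilon)t)$ and bounding each exponential factor below by $(1+\epsilon)^{p^--1}$, arriving at the identical constant $k_\epsilon=(1+\epsilon)^{p^--1}>1$.
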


\begin{proof}
Let $p^-> 1$. Then for a.a. $x\in\Omega$, $t\ge 0$,
\begin{align*}
 \Phi'(x,(1+\epsilon) t)  &= p(x) (1  + \epsilon)^{p(x)-1} t^{p(x)-1}  + a(x) r(x) (1+\epsilon)^{r(x) -1} t^{r(x) - 1}\\
 &\ge (1 + \epsilon)^{p^- - 1} p(x) t^{p(x) -1}  + a(x) r(x) (1+\epsilon)^{p^- -1} t^{r(x) - 1} =  (1 + \epsilon)^{p^- - 1}\Phi'(x, t),
 \end{align*}
 where  $(1 + \epsilon)^{p^- - 1}>1$. It follows uniform convexity of $\Phi$ by (\ref{eq:UC})

\end{proof}

Partial results of the next corollary are known \cite{DHHR}, but here we present a complete criterion of uniform convexity of $L^{p(\cdot)}$.
\begin{corollary} \label{cor:uclpx} Let $(\Omega, \Sigma, \mu)$ be a non-atomic separable measure space.
The variable exponent Lebesgue space $L^{p(\cdot)}$ is uniformly convex if and only if $1<p^-\le p^+<\infty$.
\end{corollary}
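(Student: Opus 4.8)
The plan is to reduce the statement about $L^{p(\cdot)}$ entirely to the general characterization in Theorem~\ref{th:uniconmosp}, combined with the two structural facts about the variable exponent function established earlier: Theorem~\ref{th:deltalpx} (which describes when $\Phi(x,t)=t^{p(x)}/p(x)$ and its conjugate satisfy $\Delta_2$) and Lemma~\ref{lem:unifconLpx} (which gives uniform convexity of $\Phi$ when $p^->1$). Thus the whole argument is a bookkeeping exercise matching hypotheses, and there is no real obstacle; the only point requiring a moment's care is the ``only if'' direction, where I must rule out the case $p^+=\infty$ separately from $p^-=1$, since Theorem~\ref{th:uniconmosp} packages these as ``$\Phi\notin\Delta_2$'' versus ``$\Phi$ not uniformly convex''.

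\emph{Sufficiency.} Assume $1<p^-\le p^+<\infty$. By Theorem~\ref{th:deltalpx}, the condition $p^+<\infty$ gives that $\Phi(x,t)=t^{p(x)}/p(x)$ satisfies $\Delta_2$. By Lemma~\ref{lem:unifconLpx}, the condition $p^->1$ gives that $\Phi$ is uniformly convex. Since the measure space is non-atomic and separable, Theorem~\ref{th:uniconmosp} applies and yields that $(L^{p(\cdot)},\|\cdot\|_{p(\cdot)})=(L^\Phi,\|\cdot\|_\Phi)$ is uniformly convex.

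\emph{Necessity.} Suppose $L^{p(\cdot)}$ is uniformly convex. By Theorem~\ref{th:uniconmosp} (again using that $\mu$ is non-atomic and separable) both $\Phi\in\Delta_2$ and $\Phi$ is uniformly convex. From $\Phi\in\Delta_2$ and Theorem~\ref{th:deltalpx} we get $p^+<\infty$. It remains to show $p^->1$. Suppose toward a contradiction that $p^-=1$, i.e.\ $\essinf_{x\in\Omega}p(x)=1$. A uniformly convex space is in particular reflexive, hence so is $L^{p(\cdot)}=L^\Phi$; by Theorem~\ref{th:funcMO}(iii) this forces $\Phi^*\in\Delta_2$, which by Theorem~\ref{th:deltalpx} requires $p^->1$, contradicting $p^-=1$. (Alternatively one invokes Remark~\ref{rem:UC}: uniform convexity of $\Phi$ via \eqref{eq:UC} implies, by Lemma~\ref{delta2compl}, that $\Phi^*\in\Delta_2$, again giving $p^->1$ through Theorem~\ref{th:deltalpx}.) Hence $1<p^-\le p^+<\infty$, completing the proof.
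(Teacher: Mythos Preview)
Your proof is correct. The sufficiency direction is identical to the paper's. For necessity, the paper takes a slightly different route: from uniform convexity it passes directly to reflexivity, observes that $L^{p(\cdot)}$ then cannot contain an isomorphic copy of $\ell^1$, and invokes Corollary~\ref{cor:l1lpx} to obtain $1<p^-\le p^+<\infty$ in one stroke. Your argument instead splits the work, first using Theorem~\ref{th:uniconmosp} to get $\Phi\in\Delta_2$ (hence $p^+<\infty$) and then reflexivity via Theorem~\ref{th:funcMO}(iii) to get $\Phi^*\in\Delta_2$ (hence $p^->1$). Both approaches are sound and ultimately rest on the same fact (reflexivity of $L^\Phi$ forces both $\Delta_2$ conditions); the paper's packaging is marginally shorter, while yours is slightly more self-contained in that it does not route through the $\ell^1$-copy corollary. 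One caveat on your parenthetical alternative: Remark~\ref{rem:UC}(2) as stated only asserts that \eqref{eq:UC} \emph{implies} uniform convexity of $\Phi$, not the converse, so that branch as written has a small gap; your main argument does not rely on it.
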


\begin{proof}
If $1<p^- \le p^+ <\infty$ then $\Phi(x,t) = \frac{t^{p(x)}}{p(x)}$ satisfies $\Delta_2$ by Theorem \ref{th:deltalpx} and is uniformly convex by Lemma \ref{lem:unifconLpx}. Thus the space is uniformly convex in view of Theorem \ref{th:uniconmosp}. On the other hand uniform convexity of $L^{p(\cdot)}$ implies reflexivity of the space, and so it cannot have an isomorphic subspace of  $\ell^1$, and thus $1<p^- \le p^+ < \infty$ by Corollary \ref{cor:l1lpx}.

\end{proof}

\begin{corollary} \label{cor:uclpr} Let $(\Omega, \Sigma, \mu)$ be a non-atomic separable measure space. If $\Phi$ is a double phase functional of the form {\rm(\ref{eq:dblfun})}, then the space $L^\Phi$ is uniformly convex
 if $1<p^-$ and $r^+<\infty$.
\end{corollary}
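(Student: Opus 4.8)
The plan is to reduce the claim directly to Theorem~\ref{th:uniconmosp}, the complete characterization of uniform convexity of $L^\Phi$, by verifying its two hypotheses for the double phase functional $\Phi(x,t) = t^{p(x)} + a(x) t^{r(x)}$ under the assumptions $1 < p^-$ and $r^+ < \infty$. Both ingredients are already available in the excerpt, so the proof is essentially an assembly.

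First I would invoke Theorem~\ref{th:deltadobphase}(i): since $r^+ < \infty$ (and hence also $p^+ \le r^+ < \infty$), the function $\Phi$ satisfies condition $\Delta_2$. Next I would invoke Lemma~\ref{unifconv}: since $p^- > 1$, the double phase functional $\Phi$ is uniformly convex as a $MO$ function. With $\Phi \in \Delta_2$ and $\Phi$ uniformly convex, Theorem~\ref{th:uniconmosp} applies on the non-atomic separable measure space $(\Omega, \Sigma, \mu)$ and yields that $(L^\Phi, \|\cdot\|_\Phi)$ is uniformly convex. That completes the argument.

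There is essentially no obstacle here: both nontrivial steps (the $\Delta_2$ property and the uniform convexity of $\Phi$) have been established in preceding results, and the only thing to check is that their hypotheses match the corollary's hypotheses, which they do verbatim. The one point worth a sentence is that $r^+ < \infty$ together with $p \le r$ a.e. gives $p^+ \le r^+ < \infty$, so the full strength of Theorem~\ref{th:deltadobphase}(i) is available; but this is immediate. I would write the proof in two or three sentences citing Theorem~\ref{th:deltadobphase}(i), Lemma~\ref{unifconv}, and Theorem~\ref{th:uniconmosp} in that order.

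\begin{proof}
Assume $1 < p^-$ and $r^+ < \infty$. Since $p(x) \le r(x)$ a.e., we have $p^+ \le r^+ < \infty$, so by Theorem~\ref{th:deltadobphase}(i) the double phase functional $\Phi$ satisfies condition $\Delta_2$. Moreover, since $p^- > 1$, Lemma~\ref{unifconv} shows that $\Phi$ is uniformly convex. As $(\Omega, \Sigma, \mu)$ is non-atomic and separable, Theorem~\ref{th:uniconmosp} now gives that $(L^\Phi, \|\cdot\|_\Phi)$ is uniformly convex.
\end{proof}
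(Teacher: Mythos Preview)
Your proof is correct and follows essentially the same route as the paper's own proof: invoke Theorem~\ref{th:deltadobphase}(i) for $\Delta_2$, Lemma~\ref{unifconv} for uniform convexity of $\Phi$, and conclude via Theorem~\ref{th:uniconmosp}. The only difference is your explicit remark that $p^+ \le r^+ < \infty$, which the paper leaves implicit.
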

\begin{proof} If $p^- > 1$ then by Lemma \ref{unifconv}, $\Phi$ is uniformly convex. If $r^+ < \infty$ then $\Phi$ satisfies $\Delta_2$ by Theorem \ref{th:deltadobphase}. We conclude in view of Theorem \ref{th:uniconmosp}.

\end{proof}

Now we proceed to consider uniform convexity of $MOS$ space $W^{1,\Phi}$.

\begin{theorem}\label{th:UC}
Let $\Omega=(\alpha,\beta)$ where $-\infty<\alpha<\beta<\infty$. Let $\Phi$ be a $MO$ function and $W^{1,\Phi}$ be the Sobolev space equipped with the norm  $\|f\|_{1,\Phi} = \|f\|_\Phi + \|f'\|_\Phi$, $f\in W^{1,\Phi}$.

   If $\Phi$ satisfies condition $\Delta_2$ and  $\Phi$ is uniformly convex then 
the space $W^{1,\Phi}$   is uniformly convex.

  If in addition $\Phi$ satisfies ${\rm (V)}$,
then $\Delta_2$ and uniform convexity of $\Phi$ are also necessary conditions for the space $W^{1,\Phi}$ to be uniformly convex.

\end{theorem}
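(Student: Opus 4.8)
The easy direction is the sufficiency: if $\Phi \in \Delta_2$ and $\Phi$ is uniformly convex, then by Theorem~\ref{th:uniconmosp} the $MO$ space $L^\Phi$ is uniformly convex, hence so is the product $L^\Phi \times L^\Phi$ with the norm $\|(u,v)\|=\|u\|_\Phi+\|v\|_\Phi$ (a sum of two copies of a $UC$ space under an $\ell^1$-type combination is $UC$, since $\|x_n+y_n\|\to 2$ forces both coordinate norms to behave well); as $W^{1,\Phi}$ embeds isometrically as the closed subspace $A=\{(f,f'):f\in W^{1,\Phi}\}$ of this product via $f\mapsto(f,f')$, it inherits uniform convexity. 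I would spell out only the step that $L^\Phi\times L^\Phi$ is $UC$, reducing to the sequential criterion: from $\|u_n+v_n\|_{1\text{-sum}}\to 2$ with both in the ball, derive $\|u_n^{(i)}+v_n^{(i)}\|_\Phi\to$ (the relevant limits) in each coordinate $i=1,2$, apply $UC$ of $L^\Phi$ coordinatewise, and conclude.

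For the necessity direction, assuming condition (V), I argue by contraposition in two halves. If $\Phi\notin\Delta_2$, then by Theorem~\ref{th:ellinftycopy} the space $W^{1,\Phi}$ contains an isomorphic copy of $\ell^\infty$, hence is not even reflexive, so certainly not $UC$. The substantive half is: assume $\Phi\in\Delta_2$ but $\Phi$ is \emph{not} uniformly convex, and produce a failure of $UC$ in $W^{1,\Phi}$. Since $\Phi$ is not uniformly convex, by the characterization \eqref{eq:unconv}–\eqref{eq:unconv2} there is an $\epsilon_0>0$ such that $\int_\Omega \Phi(x,P_{\epsilon_0,c}(x))\,dx \not\to 0$ as $c\to 0$; thus along a sequence $c_k\downarrow 0$ there are sets $\Omega_k$ of positive measure and pairs $(u_k(x),v_k(x))\in E_{\epsilon_0,c_k,x}$ for $x\in\Omega_k$ witnessing that $\Phi$ behaves almost affinely there. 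The plan is to build, for each $k$, functions $\varphi_k,\psi_k\in L^\Phi$ supported on $\Omega_k$ with $\varphi_k\leftrightarrow u_k$, $\psi_k\leftrightarrow v_k$, normalized so that $\|\varphi_k\|_\Phi=\|\psi_k\|_\Phi=1$, $\|\varphi_k-\psi_k\|_\Phi\ge \epsilon_0'$, yet $\|\tfrac{\varphi_k+\psi_k}{2}\|_\Phi\to 1$ — this is exactly how one shows $L^\Phi$ itself is not $UC$ when $\Phi$ is not uniformly convex, and that construction (using $\Delta_2$ to pass between modular and norm) is already implicit in Theorem~\ref{th:uniconmosp}.

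The point specific to the Sobolev setting is to lift such $\varphi_k,\psi_k$ to $W^{1,\Phi}$ so that the norm $\|f\|_{1,\Phi}=\|f\|_\Phi+\|f'\|_\Phi$ still detects the failure. I would take $f_k(x)=\int_\alpha^x \varphi_k(y)\,dy$ and $g_k(x)=\int_\alpha^x \psi_k(y)\,dy$, so that $f_k'=\varphi_k$, $g_k'=\psi_k$; by condition (V) and Theorem~\ref{th:12} the Volterra operator is bounded on $L^\Phi$, so $\|f_k\|_\Phi\le l$, $\|g_k\|_\Phi\le l$. The trouble is that now $\|f_k\|_{1,\Phi}$ and $\|g_k\|_{1,\Phi}$ are not necessarily equal nor equal to $1$, and the ``potential'' terms $\|f_k\|_\Phi,\|g_k\|_\Phi$ could swamp the derivative terms and destroy the near-affine behaviour. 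The remedy — and what I expect to be the main obstacle to get clean — is a rescaling/concentration argument: choose $\Omega_k$ to be short intervals (or refine them to be so), which forces $\|f_k\|_\Phi,\|g_k\|_\Phi\to 0$ by the Volterra bound applied on shrinking intervals, so that $\|f_k\|_{1,\Phi}=\|f_k'\|_\Phi+o(1)=1+o(1)$ and likewise for $g_k$, while $\|f_k-g_k\|_{1,\Phi}\ge\|f_k'-g_k'\|_\Phi=\|\varphi_k-\psi_k\|_\Phi\ge\epsilon_0'$ and $\|\tfrac{f_k+g_k}{2}\|_{1,\Phi}=\|\tfrac{\varphi_k+\psi_k}{2}\|_\Phi+o(1)\to 1$. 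Normalizing $\hat f_k=f_k/\|f_k\|_{1,\Phi}$, $\hat g_k=g_k/\|g_k\|_{1,\Phi}$ then gives sequences in $S(W^{1,\Phi})$ with $\|\hat f_k+\hat g_k\|_{1,\Phi}\to 2$ but $\|\hat f_k-\hat g_k\|_{1,\Phi}\not\to 0$, contradicting $UC$. The delicate bookkeeping is: (i) verifying one can select the witnessing pairs on arbitrarily short intervals while keeping the modular mass bounded below (this uses non-atomicity of Lebesgue measure together with the uniform-in-$x$ nature of $\Phi$ failing to be $UC$, possibly after passing to a subset of $\Omega$ where the failure is quantitatively uniform), and (ii) controlling the $\|f_k\|_\Phi$ terms — both of which I would handle via the Volterra bound plus absolute continuity of the norm (order continuity, which holds since $\Phi\in\Delta_2$ by Theorem~\ref{th:ordconMO}).
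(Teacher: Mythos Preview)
Your plan for sufficiency and for the $\Delta_2$ half of necessity matches the paper and is fine. The gap is in your proposed mechanism for the main step of necessity (assuming $\Phi\in\Delta_2$ but $\Phi$ not uniformly convex).

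You correctly identify the difficulty: after lifting the witnesses $\varphi_k,\psi_k\in L^\Phi$ to $f_k=\int_\alpha^{\,\cdot}\varphi_k$, $g_k=\int_\alpha^{\,\cdot}\psi_k$, one must force $\|f_k\|_\Phi,\|g_k\|_\Phi\to 0$. Your remedy---restricting $\varphi_k,\psi_k$ to short intervals---does not do this. First, the Volterra bound only gives $\|f_k\|_\Phi\le l\|\varphi_k\|_\Phi=l$, which is bounded, not small. Second, if $\varphi_k\ge 0$ is supported on $I_k\subset(\alpha,\beta)$, then $f_k$ is \emph{not} supported on $I_k$: for $x>\sup I_k$ one has $f_k(x)=\int_{I_k}\varphi_k$, a constant tail whose $L^\Phi$-norm you cannot control without knowing $\|\chi_{I_k}\|_{\Phi^*}\to 0$, i.e.\ without $\Phi^*\in\Delta_2$, which is not assumed. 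Third, and more fundamentally, your point (i)---``select the witnessing pairs on arbitrarily short intervals while keeping the modular mass bounded below''---is generally impossible: for fixed $k$ the mass $\int_{I}\Phi(x,P_{\epsilon_0,c_k}(x))\,dx\to 0$ as $|I|\to 0$ by absolute continuity, and scaling the witnesses up on $I$ can take you out of the almost-affine region $E_{\epsilon_0,c_k,x}$ unless $P_{\epsilon_0,c_k}(x)=\infty$ on a set of positive measure, which non-UC of $\Phi$ does not guarantee.

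The paper's device is different and avoids all of this: having built step functions $x_k,y_k\ge 0$ with $I_\Phi(x_k)=I_\Phi(y_k)=1$, $I_\Phi(x_k-y_k)\ge\gamma$, and $\|\tfrac{x_k+y_k}{2}\|_\Phi\to 1$, one does \emph{not} localize but instead introduces \emph{oscillation}. On each interval where both $x_k,y_k$ are constant, subdivide into $2n$ equal pieces and define $\tilde x_k,\tilde y_k$ by alternating the sign of $x_k,y_k$ on consecutive pieces, choosing $n$ large enough that each piece contributes at most $2^{-k}$ to the integral of $\max\{x_k,y_k\}$. Then $|\tilde x_k|=x_k$ and $|\tilde y_k|=y_k$ pointwise, so all modular and norm identities for $x_k,y_k,\tfrac{x_k+y_k}{2},x_k-y_k$ transfer verbatim to $\tilde x_k,\tilde y_k$. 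But now the primitives satisfy $\bigl|\int_\alpha^x\tilde x_k\bigr|<2^{-k}$ and $\bigl|\int_\alpha^x\tilde y_k\bigr|<2^{-k}$ for every $x$, so with $f_k=\int_\alpha^{\,\cdot}\tilde x_k$ one gets $I_\Phi(\lambda f_k)\le 2^{-k}\int_\Omega\Phi(x,\lambda)\,dx\to 0$ for each $\lambda$ (using (V) and $\Delta_2$ to ensure $\int_\Omega\Phi(x,\lambda)\,dx<\infty$), whence $\|f_k\|_\Phi\to 0$. The rest of your plan then goes through. The oscillation trick is the missing idea; concentration on short intervals is not an adequate substitute.
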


\begin{proof}
By Theorem \ref{th:uniconmosp} if $\Phi$ satisfies condition $\Delta_2$ and $\Phi$ is uniformly convex then the space $L^\Phi$ is uniformly convex. Therefore $L^\Phi\times L^\Phi$  equipped with norm (\ref{eq:norm}) is also uniformly convex, and so is $W^{1,\Phi}$.

  Let now $\Phi$ satisfy (V) and $W^{1,\Phi}$ be uniformly convex. Then the space $W^{1,\Phi}$ is reflexive, and so it can not have a subspace isomorphic to $\ell^\infty$. 
By Theorem \ref{th:ellinftycopy}, $\Phi$ needs to satisfy condition $\Delta_2$.

  Thus assume that $\Phi$ satisfies $\Delta_2$ and $\Phi$ is not uniformly convex. It follows by (\ref{eq:unconv}) that there exist $\epsilon>0$ and a sequence $\{c_k\} \subset (0,1)$ with $\lim_{k\to\infty} c_k = 0$ and
\[
\lim_{k\to\infty} \int_\Omega \Phi(x, P_{\epsilon, c_k}(x))\, dx > 0.
\]
Hence there are $\delta >0$, $N>0$ such that for all $k>N$,
\begin{equation}\label{eq:17}
\int_\Omega \Phi(x,P_{\epsilon,c_k}(x))\, dx > 2\delta.
\end{equation}
In view of (\ref{eq:unconv2}) for every $c\in(0,1)$ there exists a sequence
 $\{u_k^c,v_k^c\}$ of non-negative measurable functions satisfying the following conditions,
\begin{equation}\label{eq:18}
 |u_k^c(x) -v_k^c(x)| \uparrow P_{\epsilon,c} (x) \ \ \text{if} \ \ k\to\infty, \ \ \text{for}\ \  a.a. \ \  x\in\Omega.
\end{equation}
\begin{align*}
&\text{If} \ \  u_k^c(x) \ne v_k^c(x)\ \ \text{then}\ \  
\Phi(x, |u_k^c(x) - v_k^c(x)|) \ge \max\{\Phi(x, \epsilon u_k^c(x)), \Phi(x, \epsilon v_k^c(x))\} \ \ \ \text{and}\\ 
&\Phi\left(x, \frac{u_k^c(x) + v_k^c(x)}{2}\right) > \frac{1-c}{2} (\Phi(x,u_k^c(x)) + \Phi(x,v_k^c(x))).
\end{align*}
By (\ref{eq:17}) and (\ref{eq:18}), for every $k\in\N$ there exsts $j_k\in\N$ such that for all $k\in\N$,
\[
\int_\Omega\Phi(x,|u_{j_k}^{c_k}(x) - v_{j_k}^{c_k}(x)|)\, dx > \frac12 \int_\Omega \Phi(x, P_{\epsilon,c_k} (x))\,dx > \delta.
\]
Letting $u_k = u_{j_k}^{c_k}$, $v_k = v_{j_k}^{c_k}$ and applying the above inequalities we obtain for every $k\in\N$,
\begin{equation}\label{eq:19}
 I_\Phi(u_k-v_k) =\int_\Omega \Phi(x, |u_k(x) - v_k(x) | )\, dx > \delta,
\end{equation}
and if $u_k(x)\ne v_k(x)$ then
\begin{equation}\label{eq:20}
\Phi(x, |u_k(x) - v_k(x)|) > \max\{ \Phi(x, \epsilon u_k(x),\Phi(x, \epsilon v_k(x)\},
\end{equation}
\begin{equation}\label{eq:21}
\Phi\left(x, \frac12 (u_k(x) + v_k(x))\right) > \frac{1-c_k}{2} (\Phi(x,u_k(x) + \Phi(x, v_k(x))).
\end{equation}
In view of $\Delta_2$, by Theorem \ref{th:ordconMO}, there exists $\gamma\in (0,\delta)$ such that for all $u\in L^\Phi$,
\begin{equation}\label{eq:22}
I_\Phi(u) < \gamma\Rightarrow \|u\|_\Phi < \epsilon.
\end{equation}
 By (\ref{eq:19}) for every $k\in\N$ we find the sets $E_k$  satisfying
 \[
 E_k \subset \{x\in\Omega: u_k(x)\ne v_k(x)\} \ \ \ \text{and}\ \ \ \int_{E_k} \Phi(x, |u_k(x) - v_k(x)|)\,dx = \gamma.
 \]
If $x\in E_k$ then $u_k(x) \ne v_k(x)$ and  by (\ref{eq:20}),  $\Phi(x, |u_k(x) - v_k(x) ) > \Phi(x, \epsilon u_k(x))$. Hence 
 \begin{equation}\label{eq:23}
 \int_{E_k} \Phi(x, \epsilon u_k(x))\,dx \le \int_{E_k} \Phi(x, |u_k(x) - v_k(x)|)\, dx = \gamma.
 \end{equation}
 It follows in view of (\ref{eq:22}) that $\|\epsilon u_k \chi_{E_k}\|_\Phi <\epsilon$. Consequently and by symmetry, for all $k\in\N$,
 \[
 \|u_k \chi_{E_k}\|_\Phi \le 1 \ \ \ \text{and} \ \ \ \|v_k \chi_{E_k} \|_\Phi \le 1.
 \]
 Let for each $k\in\N$,
 \[
 \Omega_k = \{x\in E_k: \Phi(x,u_k(x)) \ge \Phi(x, v_k(x))\} \ \ \ \text{and} \ \ \ \widetilde{\Omega}_k = E_k \setminus \Omega_k.
 \] 
Set also
\[
\alpha_k = \int_{\Omega_k} (\Phi(x,u_k(x)) - \Phi(x,v_k(x)))\, dx \ \ \ \text{and}\ \ \ \widetilde{\alpha}_k = \int_{\widetilde{\Omega}_k} (\Phi(x,v_k(x)) - \Phi(x,u_k(x)))\, dx.
\]
Clearly, $\alpha_k, \,  \widetilde{\alpha}_k >0$. Therefore for each $k\in\N$ there exists $F_k \subset \Omega_k$ satisfying
\[
\int_{F_k} (\Phi(x,u_k(x)) - \Phi(x,v_k(x)))\, dx = \frac{\alpha_k}{2}.
\] 
Hence
\[
\int_{F_k} (\Phi(x,u_k(x)) - \Phi(x,v_k(x)))\, dx = 
\int_{\Omega_k \setminus F_k} (\Phi(x,u_k(x)) - \Phi(x,v_k(x)))\, dx,
\]
which implies that 
\[
\int_{F_k} \Phi(x,v_k(x))\,dx + \int_{\Omega_k \setminus F_k} \Phi(x,u_k(x))\, dx =  \int_{\Omega_k \setminus F_k} \Phi(x,v_k(x))\, dx + \int_{F_k} \Phi(x,u_k(x))\, dx.
\]
Analogously for every $k\in\N$ there exists $\widetilde{F}_k \subset \widetilde{\Omega}_k$ such that
\[
\int_{\widetilde{F}_k} \Phi(x,v_k(x))\,dx + \int_{\widetilde{\Omega}_k \setminus \widetilde{F}_k} \Phi(x,u_k(x))\, dx =  \int_{\widetilde{\Omega}_k \setminus \widetilde{F}_k} \Phi(x,v_k(x))\, dx + \int_{\widetilde{F}_k} \Phi(x,u_k(x))\, dx.
\]
Setting now for $k\in\mathbb{N}$,
\begin{align*}
\hat{x}_k &= u_k \chi_{F_k \cup (\widetilde{\Omega}_k \setminus \widetilde{F}_k)} +v_k \chi_{\widetilde{F}_k \cup (\Omega_k \setminus F_k)},\\
\hat{y}_k &= v_k \chi_{F_k \cup (\widetilde{\Omega}_k \setminus \widetilde{F}_k)} +u_k \chi_{\widetilde{F}_k \cup (\Omega_k \setminus F_k)},
\end{align*}
we get
\[
[F_k \cup (\widetilde{\Omega}_k \setminus \widetilde{F}_k)] \cup [\widetilde{F}_k \cup (\Omega_k \setminus F_k)] = E_k, \  \ \
[F_k \cup (\widetilde{\Omega}_k \setminus \widetilde{F}_k)] \cap [\widetilde{F}_k \cup (\Omega_k \setminus F_k)] = \emptyset \ \ \ 
 \text{and}  \ \ \  I_\Phi(\hat{x}_k) = I_\Phi(\hat{y}_k)
\]
 for all $k\in\N$. In view of (\ref{eq:20}) and (\ref{eq:23}),
\[
 \delta > \gamma = I_\Phi((u_k - v_k)\chi_{E_k}) \ge 
I_\Phi(\epsilon \max\{u_k,v_k\} \chi_{E_k}),
\]
and so by (\ref{eq:22}), $\|\epsilon \max\{u_k,v_k\} \chi_{E_k}\|_\Phi \le \epsilon$. Hence $\|\max\{u_k,v_k\} \chi_{E_k}\|_\Phi \le 1$ and consequently
\[
I_\Phi(\max\{u_k, v_k\}\chi_{E_k}) \le 1.
\]
Since $\hat{x}_k \le \max\{u_k,v_k\}\chi_{E_k}$ and 
$\hat{y}_k \le \max\{u_k,v_k\}\chi_{E_k}$, we get
\[
\beta_k := I_\Phi(\hat{x}_k) = I_\Phi(\hat{y}_k) \le I_\Phi(\max\{u_k, v_k\}\chi_{E_k}) \le 1.
\]
Now for every $k\in\mathbb{N}$, in view of $\mu(\Omega\setminus E_k) > 0$, there exist $G_k\subset \Omega\setminus E_k$ and $\sigma_k >0$ such that
\[
\int_{G_k} \Phi(x, \sigma_k)\,dx = 1- \beta_k.
\]
Finally let
\[
x_k = \hat{x}_k \chi_{E_k} + \sigma_k \chi_{E_k}, \ \ \ 
y_k = \hat{y}_k \chi_{E_k} + \sigma_k \chi_{E_k}.
\]
Then for all $k\in\N$,
\begin{equation}\label{eq:25}
I_\Phi(x_k) = I_\Phi(y_k) = 1.
\end{equation} 
Moreover by (\ref{eq:23}),
\begin{align*}
0<\gamma &= I_\Phi((u_k - v_k)\chi_{E_k}) = I_\Phi(x_k - y_k) = I_\Phi(\hat{x}_k - \hat{y}_k) \\
&\le I_\Phi(\max\{\hat{x}_k, \hat{y}_k\} )= I_\Phi(\max\{u_k, v_k\} \chi_{E_k}) \le 1.
\end{align*}
Consequently for all $k\in\N$,
\begin{equation}\label{eq:26}
0< \gamma \le I_\Phi(x_k - y_k) \le \|x_k - y_k\|_\Phi.
\end{equation}
Since 
$\|x_k\|_\Phi = \|y_k\|_\Phi = 1$, $\|\frac{x_k + y_k}{2}\|_\Phi \le 1$. Hence 
\begin{equation}\label{eq:27}
\left\|\frac{x_k + y_k}{2}\right\|_\Phi \ge I_\Phi\left(\frac{x_k + y_k}{2}\right).
\end{equation} 
Moreover,
\begin{align*}
I_\Phi\left(\frac{x_k + y_k}{2}\right) &= I_\Phi\left(\frac{\hat{x}_k + \hat{y}_k}{2}\chi_{E_k}\right) + I_\Phi(\sigma_k \chi_{E_k})\\
&=\int_\Omega \Phi\left(x, \frac{u_k(x) + v_k(x)}{2} \chi_{F_k \cup (\widetilde{\Omega}_k \setminus \widetilde{F}_k)} (x)\right)\, dx \\
&+ \int_\Omega \Phi\left(x, \frac{u_k(x) + v_k(x)}{2} \chi_{\widetilde{F}_k \cup (\Omega_k \setminus F_k)} (x)\right)\, dx + \int_{G_k} \Phi(x, \sigma_k)\, dx\\
&=\int_{E_k} \Phi\left(x, \frac{u_k(x) + v_k(x)}{2} \right)\,dx + \int_{G_k} \Phi(x, \sigma_k)\, dx\\
&\ge \frac{1-c_k}{2} \int_{E_k} (\Phi(x,u_k(x)) + \Phi(x,v_k(x)))\,dx + \int_{G_k} \Phi(x, \sigma_k)\, dx\ \ \ \ \text{by} \ \ (\ref{eq:21})\\
&=\frac12 I_\Phi(x_k) + \frac12 I_\Phi(y_k) - \frac{c_k}{2} \int_{E_k}  (\Phi(x,u_k(x)) + \Phi(x,v_k(x)))\,dx \\
&\ge 1-c_k \to 1 \ \ \  \text{by} \ \ \ (\ref{eq:25}),
\end{align*}
when $k\to\infty$.

  Combining the above and (\ref{eq:26}), (\ref{eq:27}), it follows that $L^\Phi$ is not uniformly convex. 

  Now we will proceed to show that  $W^{1,\Phi}$ is not uniformly convex either. 

  Recall that a measurable function is called {\it simple} if it assumes finite number of values. A function $f:
(\alpha, \beta) \to \C$ is called a {\it step function} if there exists a finite partition $\alpha=\alpha_0 < \alpha_1 < \dots < \alpha_m = \beta$ and the numbers $\{a_i\}_{i=1}^m\subset \C$ such that $f(x) = \sum_{i=1}^m a_i \chi_{(\alpha_{i-1}, \alpha_i)} (x)$, $x\in (\alpha, \beta)$.

  First observe that the functions $u_k^c$, $v_k^c$ satisfying (\ref{eq:18}) can be chosen to be simple functions. Therefore the functions $x_k$ and $y_k$ can be also chosen as simple functions. The next observation is that these functions can be replaced by step functions. In fact it follows from the regularity of the Lebesgue measure on $\Omega = (\alpha, \beta)$ and the assumption of $\Delta_2$ condition. By Theorem \ref{th:ordconMO}, $L^\Phi$ is order continuous under the assumption of $\Delta_2$. It implies in particular that for any $f\in L^\Phi$ and every $\epsilon>0$ there is $\delta>0$ such that whenever $\mu(A) < \delta$ then $\|f\chi_A\|_\Phi < \epsilon$ \cite{BS}. By regularity of the Lebesgue measure, for any measurable $A\subset \Omega$ with $\chi_A \in L^\Phi$ and any $\delta>0$, there exist disjoint open intervals $G_1, \dots, G_m$ such that $\mu((A \setminus \cup_{i=1}^m G_i)\cup( \cup_{i=1}^m G_i \setminus A)) < \delta$. Hence $\|\chi_A - \chi_{\cup_{i=1}^m G_i }\|_\Phi = \|\chi_{(A \setminus \cup_{i=1}^m G_i)\cup( \cup_{i=1}^m G_i \setminus A)}\|_\Phi < \epsilon$. Therefore we can approximate any measurable subset of $\Omega$ by a finite union of open disjoint intervals. So any simple function can be replaced by a step function. It follows that the functions $x_k$ and $y_k$ can be taken as step functions. Recall that $x_k$ and $y_k$ are non-negative.

  By the above discussion, without loss of generality, assume for every $k\in\N$,
\[
x_k = a_{k1} \chi_{(\alpha_0, \alpha_1)} + a_{k2} \chi_{(\alpha_1, \alpha_2)} + \dots + a_{kM_k} \chi_{(\alpha_{M_k-1}, \alpha_{M_k})},
\]
where $\alpha = \alpha_0 < \alpha_1 < \dots < \alpha_{M_k} = \beta$ and $\{a_{ki}\}_{i=1}^{M_k}\subset [0,\infty)$. Similarly let
\[
y_k = b_{k1} \chi_{(\beta_0, \beta_1)} + b_{k2} \chi_{(\beta_1, \beta_2)} + \dots + b_{kJ_k} \chi_{(\beta_{J_k-1}, \beta_{J_k})},
\]
with $\alpha = \beta_0 < \beta_1 < \dots < \beta_{J_k} = \beta$ and $\{b_{kj}\}_{j=1}^{J_k}\subset [0,\infty)$. 
Now let  $\{(\gamma_{i-1}, \gamma_i)\}_{i=1}^m$ be the family of all intersections of the intervals $(\alpha_{p-1}, \alpha_p) \cap  (\beta_{j-1}, \beta_j) $, $p=1,\dots,M_k$, $j=1,\dots,J_k$,  which are neither empty nor reduced to  one point. Let $\gamma_i$  be  ordered as $\alpha = \gamma_0 < \gamma_1 < \dots <\gamma_m = \beta$. The numbers $\gamma_i$ and $m\in\N$ depend on $k$. Both functions $x_k$ and $y_k$ are constant on every interval $(\gamma_{i-1}, \gamma_i)$.

  Let $l\in \{1,\dots,m\}$ be fixed.  Divide $(\gamma_{l-1},\gamma_l)$ into $2n_l$ equal subintervals for $n_l\in\mathbb{N}$, 
\[
(\gamma^l_1,\gamma^l_2), (\gamma^l_2,\gamma^l_3), \dots , (\gamma^l_{2n_l},\gamma^l_{2n_l+1}),
\]
such that
\begin{align*}
\int_{\gamma_{2i-1}^l}^{\gamma_{2i}^l} \max\{x_k, y_k\} &< \frac{1}{2^k}, \ \ \ \ i= 1,\dots,n_l,\\
\int_{\gamma_{2i}^l}^{\gamma_{2i+1}^l} \max\{x_k, y_k\} &< \frac{1}{2^k}, \ \ \ \ i= 1,\dots,n_l.
\end{align*}
Since each $x_k$ or $y_k$ is constant on the interval $(\gamma_{l-1},\gamma_l)$, so 

\begin{equation*}
\int_{\gamma_{2i-1}^l}^{\gamma_{2i}^l} x_k = \int_{\gamma_{2i}^l}^{\gamma_{2i+1}^l} x_k ,\ \ \ \ \int_{\gamma_{2i-1}^l}^{\gamma_{2i}^l} y_k = \int_{\gamma_{2i}^l}^{\gamma_{2i+1}^l} y_k, \ \ \   
i= 1,\dots,n_l.\\
\end{equation*}

  Define on each interval $(\gamma_{l-1},\gamma_l)$, $l=1,\dots, m$,
\[
\tilde{x}_k = x_k \chi_{(\gamma_1^l, \gamma_2^l)} -x_k \chi_{(\gamma_2^l, \gamma_3^l) }
+ \dots + x_k \chi_{(\gamma_{2n_l-1}^l, \gamma_{2n_l}^l)}- x_k \chi_{(\gamma_{2n_l}^l, \gamma_{2n_l+1}^l)},
\]
\[
\tilde{y}_k = y_k \chi_{(\gamma_1^l, \gamma_2^l)} -y_k \chi_{(\gamma_2^l, \gamma_3^l) }
+ \dots + y_k \chi_{(\gamma_{2n_l -1}^l, \gamma_{2n_l}^l)} - y_k \chi_{(\gamma_{2n_l}^l, \gamma_{2n_l+1}^l)}.
\]
We defined $\tilde{x}_k$, $\tilde{y}_k$ on every $(\gamma_{l-1},\gamma_l)$, so they are well defined on $(\alpha,\beta)$.
 If $x\in (\gamma_{l-1}, \gamma_l)$ then either $x\in (\gamma_{2i-1}^l, \gamma_{2i}^l)$  or  $x\in (\gamma_{2i}^l, \gamma_{2i+1}^l)$ for some $i=1,\dots, n_l$. For $x\in (\gamma_{2i-1}^l, \gamma_{2i}^l)$,
\[
\left|\int_{\gamma_{l-1}}^x \tilde{x}_k \right| = \left| \left(\int_{\gamma_1^l}^{\gamma_2^l} x_k - \int_{\gamma_2^l}^{\gamma_3^l} x_k\right) + \dots + \int_{\gamma_{2i-1}^l}^x x_k \right| \le \left|\int_{\gamma_{2i-1}^l}^{\gamma_{2i}^l} x_k \right| < \frac{1}{2^k}.
\]
For $x\in (\gamma_{2i}^l, \gamma_{2i+1}^l)$,
\begin{align*}
\left|\int_{\gamma_{l-1}}^x \tilde{x}_k \right| &= \left| \left(\int_{\gamma_1^l}^{\gamma_2^l} x_k - \int_{\gamma_2^l}^{\gamma_3^l} x_k\right) + \dots + \left(\int_{\gamma_{2i-1}^l}^{\gamma_{2i}^l} x_k - \int_{\gamma_{2i}^l}^x x_k\right) \right| \\
&= \int_{\gamma_{2i-1}^l}^{\gamma_{2i}^l} x_k - \int_{\gamma_{2i}^l}^x x_k < \frac{1}{2^k}.
\end{align*}
Combining the above we get for every $k\in\N$ and $x\in(\gamma_{l-1}, \gamma_l)$,
\[
\int_{\gamma_{l-1}}^{\gamma_l} \tilde{x}_k = 0, \ \ \ \ \left|\int_{\gamma_{l-1}}^x \tilde{x}_k\right| < \frac{1}{2^k}.
\] 
Similarly we get for every $k\in\N$ and $x\in(\gamma_{l-1}, \gamma_l)$,
\[
\int_{\gamma_{l-1}}^{\gamma_l} \tilde{y}_k = 0, \ \ \ \ \left|\int_{\gamma_{l-1}}^x \tilde{y}_k\right| < \frac{1}{2^k}.
\] 
Since the above inequalities are satisfied for every $l=1,\dots,m$ we obtain for every $x\in (\alpha, \beta)$,
\begin{equation}\label{eq:28}
\left| \int_\alpha^x \tilde{x}_k \right| < \frac{1}{2^k}, \ \ \ \ \ \left|\int_\alpha^x \tilde{y}_k \right| < \frac{1}{2^k}.
\end{equation}
Let for $k\in\N$, $x\in (\alpha, \beta)$,
\[
f_k(x) =\int_\alpha^x \tilde{x}_k, \ \ \ \ g_k(x) = \int_\alpha^x \tilde{y}_k.
\]
Then
\[
f_k'(x) = \tilde{x}_k(x), \ \ \ \ g_k'(x) = \tilde{y}_k(x),
\]
for a.a. $x\in (\alpha,\beta)$.
By (\ref{eq:28}) for every $\lambda>0$,
\begin{equation}\label{eq:29}
I_\Phi(\lambda f_k) = \int_\Omega \Phi\left(x, \lambda\left|\int_\alpha^x \tilde{x}_k\right|\right)\, dx \le \frac{1}{2^k} \int_\Omega \Phi(x,\lambda)\,dx.
\end{equation}
Now by  assumption ${\rm (V)}$ (see Definition \ref{V condtion}) and $\Delta_2$,  $\int_\Omega \Phi(x, \lambda)\, dx <\infty$. Hence  the right side of (\ref{eq:29}) tends to zero when $k\to \infty$. It follows that 
\[
\|f_k\|_\Phi \to 0\ \ \ \ \text{if}\ \ \ \ k\to \infty.
\]
 Similarly 
\[
 \|g_k\|_\Phi\to 0 \ \ \ \ \text{as} \ \ \ \ k\to\infty.
\]
In view of (\ref{eq:25}), we have for every $k\in\N$, 
\[
\|\tilde{x}_k\|_\Phi=  \|\,|\tilde{x}_k|\, \|_\Phi = \|x_k\|_\Phi =1 \ \ \ \text{and} \ \ \  \|\tilde{y}_k\|_\Phi=  \|\,|\tilde{y}_k|\, \|_\Phi = \|y_k\|_\Phi =1.
\]
Consequently,
\[
\|f_k\|_{1,\Phi} = \|f_k\|_\Phi + \|f_k'\|_\Phi = \|f_k\|_\Phi + \|\tilde{x}_k\|_\Phi \to 1, 
\]
\[ 
\|g_k\|_{1,\Phi} = \|g_k\|_\Phi + \|g_k'\|_\Phi = \|g_k\|_\Phi + \|\tilde{y}_k\|_\Phi \to 1,
\]
as $k\to\infty$.

  Moreover $\left|\frac{\tilde{x}_k + \tilde{y}_k}{2}\right| = \frac{x_k + y_k}{2}$ for every $k\in\N$. Thus in view of (\ref{eq:27}),
\begin{align*}
\left\|\frac{f_k + g_k}{2}\right\|_{1,\Phi} &= \left\|\frac{f_k + g_k}{2}\right\|_\Phi + \left\|\frac{\tilde{x}_k + \tilde{y}_k}{2}\right\|_\Phi\\
& \ge \left\|\frac{x_k + y_k}{2}\right\|_\Phi \ge I_\Phi\left(\frac{x_k + y_k}{2}\right) \ge 1-c_k \to 1,
\end{align*}
as $k\to\infty$.  We also have by (\ref{eq:26}),
\[
\|f_k - g_k\|_{1,\Phi} =\|f_k - g_k\|_\Phi + \|\tilde{x}_k - \tilde{y}_k\|_\Phi \ge \|\tilde{x}_k - \tilde{y}_k\|_\Phi= 
\|x_k - y_k\|_\Phi \ge \gamma 
\]
for all $k\in\N$. It shows that $W^{1,\Phi}$ is not uniformly convex and the proof is finished.

\end{proof}
The next result follows from Theorems \ref{th:uniconmosp} and \ref{th:UC}.

\begin{corollary} \label{cor:ucmosmo}
Let $\Omega=(\alpha,\beta)$, $\infty<\alpha<\beta<\infty$. Let $\Phi$ satisfy ${\rm (V)}$. Then $L^\Phi$ is uniformly convex if and only if $W^{1,\Phi}$ is uniformly convex. This in turn is equivalent to $\Phi$ satisfying $\Delta_2$ and being uniformly convex.

\end{corollary}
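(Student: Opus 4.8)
The plan is to obtain this as a direct consequence of the two preceding results, Theorem \ref{th:uniconmosp} and Theorem \ref{th:UC}, with essentially no extra work beyond checking that their hypotheses are met in the present setting.

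First I would observe that the interval $\Omega=(\alpha,\beta)$ equipped with the Lebesgue measure is a non-atomic separable measure space. Hence Theorem \ref{th:uniconmosp} applies verbatim and yields the equivalence
\[
(L^\Phi,\|\cdot\|_\Phi)\ \text{is uniformly convex}\iff \Phi\ \text{satisfies}\ \Delta_2\ \text{and}\ \Phi\ \text{is uniformly convex.}
\]
This disposes of one half of the chain of equivalences claimed in the corollary.

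Next I would invoke Theorem \ref{th:UC}. The sufficiency part of that theorem (which does not even require condition {\rm (V)}) gives that if $\Phi$ satisfies $\Delta_2$ and $\Phi$ is uniformly convex, then $W^{1,\Phi}$ is uniformly convex. Since by hypothesis $\Phi$ satisfies {\rm (V)}, the necessity part of Theorem \ref{th:UC} applies as well, so conversely uniform convexity of $W^{1,\Phi}$ forces $\Phi\in\Delta_2$ and $\Phi$ uniformly convex. Thus
\[
W^{1,\Phi}\ \text{is uniformly convex}\iff \Phi\ \text{satisfies}\ \Delta_2\ \text{and}\ \Phi\ \text{is uniformly convex.}
\]

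Finally I would chain the two equivalences through the common condition ``$\Phi$ satisfies $\Delta_2$ and $\Phi$ is uniformly convex'' to conclude that $L^\Phi$ is uniformly convex if and only if $W^{1,\Phi}$ is uniformly convex, and that either is equivalent to that joint condition on $\Phi$. There is no genuine obstacle in this argument; the only point requiring a word of justification is that the ambient measure space satisfies the non-atomicity and separability assumptions needed by Theorem \ref{th:uniconmosp}, together with the standing assumption that $\Phi$ satisfies {\rm (V)}, which is exactly what the necessity half of Theorem \ref{th:UC} consumes.
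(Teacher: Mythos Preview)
Your proposal is correct and matches the paper's own proof, which simply states that the corollary follows from Theorems \ref{th:uniconmosp} and \ref{th:UC}. You have merely spelled out the routine verification that the hypotheses of those theorems are met and how the two equivalences chain together, which is exactly the intended argument.
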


\begin{corollary} \label{cor:ucSpx}
Let $\Omega=(\alpha,\beta)$, $\infty<\alpha<\beta<\infty$. The variable exponent Sobolev  space $W^{1,p(\cdot)}$ is uniformly convex if and only if $1<p^-\le p^+<\infty$.
\end{corollary}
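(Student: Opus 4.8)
The plan is to read the statement off Theorem~\ref{th:UC}, using that the variable exponent function $\Phi(x,t)=t^{p(x)}/p(x)$ always satisfies condition~(V). First I would record this last fact: since $\Phi^*(x,1)=1/q(x)\le 1$ (and $=0$ where $p(x)=1$) for a.a. $x$, we have $\esssup_{x\in\Omega}\Phi^*(x,1)<\infty$, so Lemma~\ref{lem:111} gives $\essinf_{x\in\Omega}\Phi(x,a)>0$ for some $a>0$; on the other hand $\int_\alpha^\beta\Phi(x,1)\,dx=\int_\alpha^\beta \tfrac{dx}{p(x)}\le\beta-\alpha<\infty$. This is precisely the verification carried out in Corollary~\ref{cor:voltera-lp}, and it makes both halves of Theorem~\ref{th:UC} (equivalently, Corollary~\ref{cor:ucmosmo}) available for $W^{1,p(\cdot)}$.

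For the ``if'' part, suppose $1<p^-\le p^+<\infty$. Then $\Phi$ satisfies $\Delta_2$ by Theorem~\ref{th:deltalpx} (this is where $p^+<\infty$ enters), and $\Phi$ is uniformly convex by Lemma~\ref{lem:unifconLpx} (this is where $p^->1$ enters, through the estimate $\Phi'(x,(1+\epsilon)t)=(1+\epsilon)^{p(x)}t^{p(x)-1}\ge(1+\epsilon)^{p^-}\Phi'(x,t)$ and the derivative criterion of Remark~\ref{rem:UC}). The first assertion of Theorem~\ref{th:UC} then yields that $W^{1,p(\cdot)}$ is uniformly convex.

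For the ``only if'' part, assume $W^{1,p(\cdot)}$ is uniformly convex. Since~(V) holds, the second assertion of Theorem~\ref{th:UC} forces $\Phi$ to satisfy $\Delta_2$ and to be uniformly convex; by Theorem~\ref{th:deltalpx}, $\Phi\in\Delta_2$ gives $p^+<\infty$. To obtain $p^->1$ one may either note directly that uniform convexity of $\Phi$ fails when $\essinf_x p(x)=1$ (testing the defining condition~(\ref{def:UC}) at $v=0$ and letting $u\to\infty$ leads to the requirement $2^{1-p(x)}\le 1-\delta$ for a.a. $x$, which is impossible uniformly once $p(x)$ can be arbitrarily close to $1$), or, more cheaply, invoke Milman--Pettis: uniform convexity of $W^{1,p(\cdot)}$ implies reflexivity, hence $W^{1,p(\cdot)}$ contains no isomorphic copy of $\ell^1$, whence $1<p^-\le p^+<\infty$ by Corollary~\ref{cor:l1Lpx} (equivalently Theorem~\ref{th:copyl1suf2} together with Theorem~\ref{th:deltalpx}).

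The only slightly delicate point I anticipate is the unconditional verification of~(V) for the variable exponent function --- notably that the two requirements in~(V) are allowed to use different constants $a$ and $b$, so that $p^+=\infty$ causes no obstruction. Beyond that, the argument is a routine assembly of Theorem~\ref{th:UC}, Theorem~\ref{th:deltalpx} and Lemma~\ref{lem:unifconLpx}, with no new inequality to establish.
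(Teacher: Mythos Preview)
Your argument is correct and follows essentially the same route as the paper: the paper's proof simply cites Corollaries~\ref{cor:ucmosmo} and~\ref{cor:uclpx}, and you have unpacked their content (verification of~(V) via Corollary~\ref{cor:voltera-lp}, $\Delta_2$ via Theorem~\ref{th:deltalpx}, uniform convexity of $\Phi$ via Lemma~\ref{lem:unifconLpx}, and the reflexivity/$\ell^1$ argument for necessity). Your direct verification that uniform convexity of $\Phi$ forces $p^->1$ (via $2^{1-p(x)}\le 1-\delta$) is a small bonus not present in the paper, which relies only on the reflexivity route.
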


\begin{proof}
It follows immediately from Corollaries \ref{cor:ucmosmo} and \ref{cor:uclpx}.

\end{proof}

\begin{corollary}  Let $\Omega=(\alpha,\beta)$, $\infty<\alpha<\beta<\infty$. If $\Phi$ is a double phase functional of the form {\rm(\ref{eq:dblfun})}, then the space $W^{1,\Phi}$ is uniformly convex
 if $1<p^-$ and $r^+<\infty$.
\end{corollary}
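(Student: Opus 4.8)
The plan is to reduce the statement directly to the sufficiency half of Theorem \ref{th:UC}, whose hypotheses are precisely that $\Phi$ satisfies condition $\Delta_2$ and that $\Phi$ is uniformly convex. Note that for this implication condition (V) plays no role; it is only needed for the necessity direction. Hence the entire argument amounts to verifying those two properties of the given double phase functional $\Phi(x,t)=t^{p(x)}+a(x)t^{r(x)}$ under the assumptions $p^->1$ and $r^+<\infty$.

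First I would invoke Lemma \ref{unifconv}: since $p^->1$, the double phase functional $\Phi$ is uniformly convex (there it is shown by producing, for each $\epsilon>0$, the constant $(1+\epsilon)^{p^--1}>1$ in the derivative criterion (\ref{eq:UC}) of Remark \ref{rem:UC}). Second, I would apply Theorem \ref{th:deltadobphase}(i): since $r^+<\infty$ and therefore also $p^+\le r^+<\infty$, the functional $\Phi$ satisfies condition $\Delta_2$.

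With both facts in hand, the sufficiency direction of Theorem \ref{th:UC} gives at once that $W^{1,\Phi}$ is uniformly convex, which is the assertion. Equivalently, one may first note that $L^\Phi$ is uniformly convex (this is Corollary \ref{cor:uclpr}, or combine the two facts above with Theorem \ref{th:uniconmosp}), so that $L^\Phi\times L^\Phi$ equipped with the norm (\ref{eq:norm}) is uniformly convex; since $W^{1,\Phi}$ embeds isometrically into $L^\Phi\times L^\Phi$, it inherits uniform convexity as a closed subspace. As everything follows by direct citation, there is no real obstacle here; the only point worth stating carefully is that the sufficiency part of Theorem \ref{th:UC} does not require (V), so no integrability hypothesis on $a$ (as in Corollary \ref{cor:volteralpq}) is needed for this direction.
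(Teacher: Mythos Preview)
Your proof is correct and follows essentially the same route as the paper: the paper simply cites Corollary~\ref{cor:uclpr} (which in turn invokes Lemma~\ref{unifconv} and Theorem~\ref{th:deltadobphase}(i), then Theorem~\ref{th:uniconmosp}) and then passes to $W^{1,\Phi}$ via the isometric embedding into $L^\Phi\times L^\Phi$, i.e.\ the sufficiency half of Theorem~\ref{th:UC}. Your unpacking of these citations and the remark that condition (V) is not needed for this direction are entirely in line with the paper's argument.
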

\begin{proof} It follows from Corollary \ref{cor:uclpr}.

\end{proof}

 In the same case of finite and non-atomic measure space,  $\Phi$ is uniformly convex   if and only if $\varphi$ satisfies the following condition \cite{Kam1982},
\begin{align}\label{eq:ucorlicz}
    \forall\varepsilon>0 \ \exists\delta>0 \ \exists u_0\geq 0\forall u, \ v\geq u_0  \ \ &|u-v|\geq\max\{u,v\}\\ \notag
    &\implies \varphi\left(\frac{u+v}{2}\right)\leq \frac{1-\delta}{2}(\varphi(u)+\varphi(v)).
\end{align}
In paper \cite{CHZ} the authors gave a characterization of uniform convexity  of $W^{1,\varphi}$ under additional assumption that $\varphi$ is a $N$-function. Moreover, their methods are very specific for Orlicz functions only,   and they not applicable in the case of $MO$ functions.  Notice also, that for an Orlicz function $\varphi$  the condition (V) is always satisfied on $\Omega = (\alpha, \beta)$, that is the Voltera operator is bounded on $L^\varphi$  (Theorem \ref{th:12}). By Theorem \ref{th:UC} and the above remarks we arrive at the following result.

\begin{corollary}
Let $\Omega=(\alpha,\beta) $ where $-\infty<\alpha<\beta<\infty$. For an Orlicz function $\varphi$, the Orlicz-Sobolev space  $W^{1,\varphi}$ is uniformly convex if and only if $\varphi$ satisfies $\Delta_2^\infty$ and is uniformly convex in the sense of {\rm (\ref{eq:ucorlicz})}.
\end{corollary}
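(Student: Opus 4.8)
The plan is to obtain this corollary as a direct specialization of Theorem \ref{th:UC} to the $x$-independent $MO$ function $\Phi(x,t)=\varphi(t)$ on $\Omega=(\alpha,\beta)$ equipped with Lebesgue measure, combined with the two dictionary entries already recorded in the text that translate the $MO$-function notions of $\Delta_2$ and of uniform convexity into their classical Orlicz counterparts on a finite non-atomic measure space. First I would check that the standing hypothesis of the converse half of Theorem \ref{th:UC}, namely condition (V), is automatic here: since $\varphi$ is an Orlicz function it is finite-valued, non-decreasing, not identically $0$, so there is $b>0$ with $\varphi(b)<\infty$, whence $\int_\alpha^\beta\varphi(b)\,dx=(\beta-\alpha)\varphi(b)<\infty$, and there is $a>0$ with $\varphi(a)>0$, so $\essinf_{x\in\Omega}\varphi(a)=\varphi(a)>0$; this is exactly the content of Corollary \ref{cor:voltera-orlicz}. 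Consequently both directions of Theorem \ref{th:UC} are available, and $W^{1,\varphi}$ is uniformly convex if and only if $\varphi$, regarded as an $MO$ function, satisfies $\Delta_2$ and is uniformly convex.

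It then remains to rephrase these two conditions. For the growth condition, the Lebesgue measure on the bounded interval $(\alpha,\beta)$ is finite and non-atomic, so the $MO$ $\Delta_2$ condition for $\varphi$ is equivalent to $\Delta_2^\infty$, i.e. $\varphi(2u)\le k\varphi(u)$ for all $u\ge u_0$ and some $k>0$, $u_0\ge 0$; this is precisely the equivalence (\ref{ineq:deltatwo}) recalled in the introduction. For uniform convexity, again in the finite non-atomic setting, uniform convexity of the $MO$ function $\varphi$ (in the sense of (\ref{def:UC})) is equivalent to condition (\ref{eq:ucorlicz}); this is the characterization of Kami\'nska \cite{Kam1982} stated in the paragraph immediately preceding the corollary. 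Substituting these two equivalences into the statement obtained from Theorem \ref{th:UC} yields the asserted characterization, and in passing removes the $N$-function restriction present in \cite{CHZ}.

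I do not expect a genuine obstacle here: the argument is an assembly of Theorem \ref{th:UC}, Corollary \ref{cor:voltera-orlicz}, the $\Delta_2^\infty$ reformulation (\ref{ineq:deltatwo}), and the Orlicz-function form (\ref{eq:ucorlicz}) of uniform convexity. The only points that deserve a sentence of care are the verification of (V) for an arbitrary (not necessarily $N$-) Orlicz function on $(\alpha,\beta)$, which is immediate as above, and the observation that the two translations of $\Delta_2$ and of uniform convexity are exactly the finite non-atomic versions — which applies because $\bigl((\alpha,\beta),\text{Lebesgue}\bigr)$ is precisely such a space. Hence there is nothing to estimate and no new construction to carry out; the proof is a short citation-chaining argument.
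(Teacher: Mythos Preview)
Your proposal is correct and follows essentially the same route as the paper: the corollary is stated as an immediate consequence of Theorem \ref{th:UC} together with the preceding remarks that (V) is automatic for an Orlicz function on $(\alpha,\beta)$ (Corollary \ref{cor:voltera-orlicz} / Theorem \ref{th:12}), that $\Delta_2$ reduces to $\Delta_2^\infty$ on a finite non-atomic measure space, and that uniform convexity of $\Phi$ reduces to condition (\ref{eq:ucorlicz}) in the Orlicz case. Your write-up is slightly more explicit about verifying (V), but the logical structure is identical.
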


\section{Superreflexivity and $B$-convexity of $W^{1,\Phi}$}\label{section 4.7}

  A Banach space $(X, \|\cdot\|)$ is said to be {\it $B$-convex} if there exist a $\delta>0$ and an integer $n\ge 2$ such that for any $x_1,\dots,x_n\in X$ we can choose $\epsilon = \{\epsilon_k\}_{k=1}^n$, $\epsilon_k = \pm 1$, in such a way that
\[
\left\|\frac1n \sum_{k=1}^n \epsilon_k x_k \right\| \le (1-\delta) \max_{1\le k\le n} \|x_k\|.
\]
A Banach space $X$ is said to be {\it superreflexive} if every Banach space $Y$ which is finitely representable in $X$ is reflexive \cite{AK}.  A uniformly convex Banach space is superreflexive. Any superreflexive Banach space has a uniformly convex equivalent norm \cite[Problem 11.6]{AK}. 

\begin{lemma} \label{lem:unif} \cite[Lemma 1.1.5]{HK}
Let $(\Omega,\Sigma,\mu)$ be a $\sigma$-finite non-atomic measure space. Given a $MO$ function $\Phi$ on $\Omega$ if $\Phi^*$ satisfies condition  $\Delta_2$, then there exists a uniformly convex $MO$ function $\Psi$ equivalent to $\Phi$.
\end{lemma}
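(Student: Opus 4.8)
The plan is to combine an algebraic reduction with one explicit construction. The starting point is the $\Delta_2$ estimate for $\Phi^*$: there are $K\ge 2$ and $0\le h\in L^1$ with $\Phi^*(x,2s)\le K\Phi^*(x,s)+h(x)$ a.e.; iterating this (and enlarging $K$, which only weakens the hypothesis) gives, for a.e.\ $x$ and all $\lambda\ge 1$,
\[
\Phi^*(x,\lambda s)\ \le\ C\lambda^{q}\bigl(\Phi^*(x,s)+h_1(x)\bigr),\qquad q=\log_2 K,\ \ C=2^{q},\ \ h_1=\tfrac{h}{K-1},
\]
and we may assume $q$ so large that $q>1$ and $p:=q/(q-1)$ is as close to $1$ as we wish. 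First I would dualize this: using $\Phi=\Phi^{**}$, the identity $(C g)^*(t)=C g^*(t/C)$, the substitution $s=\mu^{p-1}r$ in $\Phi(x,\mu t)=\sup_s(s\mu t-\Phi^*(x,s))$, and the conjugacy identity $(p-1)q=p$, one obtains for a.e.\ $x$ and all $\nu\ge 1$
\[
\Phi(x,\nu t)\ \ge\ c_0\,\nu^{p}\bigl(\Phi(x,t)-h_1(x)\bigr),\qquad c_0=C^{-p}.
\]

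Next I would pass to a better equivalent function. On the ``small'' region $\{(x,t):\Phi(x,t)\le 2h_1(x)\}$ replace $\Phi(x,\cdot)$ by a power-type convex minorant matched smoothly to $\Phi$ at the boundary of that region (a standard $N$-function regularization near the origin; this is where the freedom to take $p$ close to $1$ is used, so that the one-sided derivatives at the junction are compatible). Off the small region $\Phi(x,t)-h_1(x)>\tfrac12\Phi(x,t)$, so the additive term is absorbed, and the resulting $MO$ function $\Phi_1$ satisfies $\Phi_1\sim\Phi$ and $\Phi_1(x,\nu t)\ge c_1\nu^{p}\Phi_1(x,t)$ for all $\nu\ge 1$. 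Setting
\[
\Phi_0(x,t):=t^{p}\sup_{0<s\le t}\frac{\Phi_1(x,s)}{s^{p}},
\]
one gets a convex, $x$-measurable function with $\Phi_1\le\Phi_0\le c_1^{-1}\Phi_1$ (hence $\Phi_0\sim\Phi_1\sim\Phi$) and, by construction, $t\mapsto\Phi_0(x,t)/t^{p}$ nondecreasing for a.e.\ $x$; this monotonicity automatically forces $\Phi_0'(x,0+)=0$, so $\Phi_0$ is an $MO$ function.

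Finally define
\[
\Psi(x,t)=\int_0^{t}\frac{\Phi_0(x,s)}{s}\,ds .
\]
Since $s\mapsto\Phi_0(x,s)/s$ is nondecreasing, $\Psi$ is a well-defined $MO$ function, convex in $t$, with $\Psi'(x,t)=\Phi_0(x,t)/t$; moreover $\Psi(x,t)\le\Phi_0(x,t)$, integrating over $[t,2t]$ gives $\Phi_0(x,t)\le 2\Psi(x,2t)$, and convexity gives $2\Psi(x,2t)\le\Psi(x,4t)$, so $\Psi(x,t)\le\Phi_0(x,t)\le\Psi(x,4t)$ and hence $\Psi\sim\Phi_0\sim\Phi$. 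For every $\epsilon>0$, a.e.\ $x$ and every $t>0$,
\[
\frac{\Psi'(x,(1+\epsilon)t)}{\Psi'(x,t)}=\frac{1}{1+\epsilon}\cdot\frac{\Phi_0(x,(1+\epsilon)t)}{\Phi_0(x,t)}\ \ge\ \frac{(1+\epsilon)^{p}}{1+\epsilon}=(1+\epsilon)^{p-1}=:k_\epsilon>1,
\]
the inequality being exactly the monotonicity of $\Phi_0(x,\cdot)/t^{p}$, while at $t=0$ both sides of $\Psi'(x,(1+\epsilon)t)\ge k_\epsilon\Psi'(x,t)$ vanish. Thus $\Psi$ satisfies the derivative criterion (\ref{eq:UC}), so $\Psi$ is uniformly convex by Remark~\ref{rem:UC}(2), and $\Psi\sim\Phi$, which proves the lemma.

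The only genuinely nonroutine step is the $N$-function regularization of $\Phi$ near the origin in the second paragraph — carrying it out $x$-measurably, keeping $\Phi_1(x,\cdot)$ convex, actually absorbing the $L^1$-slack $h_1$, and staying inside the equivalence class in the sense of Theorem~\ref{th:equivMO}. (One also uses at the outset the elementary remark that $\Delta_2$ forces $\Phi^*$ to be finite-valued, so no $eMO$ pathology arises.) The dualization in the first paragraph and the estimates for $\Psi$ in the last paragraph are short and elementary once the reduction is in place.
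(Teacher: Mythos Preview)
The paper does not prove this lemma; it is quoted from \cite[Lemma~1.1.5]{HK}, so there is no in-paper argument to compare against. Your scheme---dualize $\Delta_2$ for $\Phi^*$ into a lower $p$-power bound for $\Phi$, pass to an equivalent $\Phi_0$ with $\Phi_0(x,t)/t^{p}$ nondecreasing, then set $\Psi(x,t)=\int_0^{t}\Phi_0(x,s)s^{-1}\,ds$ and read off the derivative criterion (\ref{eq:UC})---is the natural one, and the first and last stages are correct as written (your rewriting $\Phi_0(x,t)=\sup_{u\ge1}u^{p}\Phi_1(x,t/u)$ makes convexity of $\Phi_0$ transparent, and the equivalence $\Psi\sim\Phi_0$ and the bound $\Psi'(x,(1+\epsilon)t)\ge(1+\epsilon)^{p-1}\Psi'(x,t)$ are fine).

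The step that fails as written is the regularization producing $\Phi_1$. Gluing a $t^{p}$-piece to $\Phi(x,\cdot)$ at the boundary $t_0(x)$ of $\{t:\Phi(x,t)\le 2h_1(x)\}$ need not yield a convex function: if $\Phi(x,\cdot)$ is linear with slope $a(x)$ on an interval extending past $t_0(x)$, then the left derivative of the glued piece at $t_0(x)$ is $p\,\Phi(x,t_0(x))/t_0(x)=p\,a(x)>a(x)=\Phi'(x,t_0(x)+)$ for \emph{every} $p>1$, so ``taking $p$ close to $1$'' does not help. And $\Phi^*\in\Delta_2$ does not exclude such linear stretches---it only forces their $\Phi$-height to be $\lesssim h(x)$, which can exceed $2h_1(x)=2h(x)/(K-1)$ once $K>3$. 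A clean bypass is to skip $\Phi_1$ and set directly
\[
\Phi_0(x,t)=\sup_{u\ge1}u^{p}\bigl(\Phi(x,t/u)-h_1(x)\bigr)_+ .
\]
This is a supremum of convex functions of $t$, hence convex; the dual power bound gives $(\Phi(x,t)-h_1(x))_+\le\Phi_0(x,t)\le c_0^{-1}\Phi(x,t)$, so $\Phi_0\sim\Phi$ in the sense of Theorem~\ref{th:equivMO}; and $\Phi_0(x,t)/t^{p}=\sup_{0<s\le t}s^{-p}(\Phi(x,s)-h_1(x))_+$ is nondecreasing by inspection. With this $\Phi_0$ your final paragraph goes through unchanged.
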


\begin{theorem} \label{th:BconvMOSob}
Let $\Omega=(\alpha,\beta)$, $-\infty<\alpha<\beta<\infty$ and let $\Phi$ satisfy condition ${\rm (V)}$. Then the following conditions are equivalent.
\begin{itemize}
\item[{\rm(i)}]
$W^{1,\Phi}$ is reflexive.
\item[{\rm(ii)}]
$W^{1,\Phi}$ is superreflexive.
\item[{\rm(iii)}]
$W^{1,\Phi}$ is $B$-convex.
\item[{\rm(iv)}]
Both $\Phi$ and $\Phi^*$ satisfy condition $\Delta_2$.
\end{itemize}

\end{theorem}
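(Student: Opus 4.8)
The plan is to prove the cycle of implications (i) $\Rightarrow$ (iv) $\Rightarrow$ (ii) $\Rightarrow$ (iii) $\Rightarrow$ (i), exploiting the isometric embedding $W^{1,\Phi}\hookrightarrow L^\Phi\times L^\Phi$ and the structural results already available. First, (i) $\Rightarrow$ (iv): this is precisely the second half of Theorem \ref{th:ref}, which under condition {\rm (V)} gives that reflexivity of $W^{1,\Phi}$ forces $\Delta_2$ of both $\Phi$ and $\Phi^*$; alternatively, one invokes Theorems \ref{th:ellinftycopy} and \ref{th:copyl1suf2} directly, since a reflexive space contains neither $\ell^\infty$ nor $\ell^1$.

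For (iv) $\Rightarrow$ (ii), I would argue as follows. If $\Phi^*$ satisfies $\Delta_2$, then by Lemma \ref{lem:unif} there is a uniformly convex $MO$ function $\Psi$ equivalent to $\Phi$. Since equivalence of $MO$ functions preserves $\Delta_2$, $\Psi$ also satisfies $\Delta_2$ (as does $\Psi^*$, being equivalent to $\Phi^*$). By Theorem \ref{th:uniconmosp}, $L^\Psi$ is then uniformly convex, hence so is $L^\Psi\times L^\Psi$ with the sum norm, and therefore $W^{1,\Psi}$ is uniformly convex, in particular superreflexive. It remains to check that $W^{1,\Phi}=W^{1,\Psi}$ as sets with equivalent norms: by Theorem \ref{th:equivMO}, $L^\Phi=L^\Psi$ with equivalent norms, and since the Sobolev norm is built from the $L^\Phi$-norm of $f$ and of $f'$, the norms $\|\cdot\|_{1,\Phi}$ and $\|\cdot\|_{1,\Psi}$ are equivalent on the common underlying space $\{f:f,f'\in L^\Phi\}$. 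Superreflexivity is a linear-topological invariant (it is stable under equivalent renormings, since any space finitely representable in $W^{1,\Phi}$ is finitely representable in $W^{1,\Psi}$), so $W^{1,\Phi}$ is superreflexive.

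The implication (ii) $\Rightarrow$ (iii) is a standard Banach-space fact: a superreflexive space admits an equivalent uniformly convex norm \cite[Problem 11.6]{AK}, and a uniformly convex space is $B$-convex (choosing signs to exploit the modulus of convexity, or simply because superreflexive spaces have no subspace isomorphic to $\ell^1_n$ uniformly, which is one of the characterizations of $B$-convexity). Finally, (iii) $\Rightarrow$ (i) uses the theorem of Giesy--James: a $B$-convex Banach space does not contain $\ell^1$ isomorphically, and more to the point $B$-convexity implies reflexivity is \emph{not} automatic in general, so here I would instead note that $B$-convexity forbids uniformly-isomorphic copies of $\ell^1_n$, hence forbids an isomorphic copy of $\ell^1$; combined with the fact that a $B$-convex space cannot contain $\ell^\infty$ (as $\ell^\infty\supset \ell^1$ complemented, or because $B$-convex spaces have nontrivial type), Theorems \ref{th:ellinftycopy} and \ref{th:copyl1suf2} then force both $\Phi$ and $\Phi^*$ to satisfy $\Delta_2$, whence $W^{1,\Phi}$ is reflexive by Theorem \ref{th:ref}. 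In short, (iii) $\Rightarrow$ (iv) $\Rightarrow$ (i), closing the loop.

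The main obstacle I anticipate is the (iv) $\Rightarrow$ (ii) step, specifically the bookkeeping needed to transfer uniform convexity of $W^{1,\Psi}$ back to a statement about $W^{1,\Phi}$: one must be careful that the equivalence $L^\Phi=L^\Psi$ genuinely yields equivalence of the two Sobolev norms (this is where Theorem \ref{th:equivMO}, together with the automatic boundedness of the embedding, does the work), and that superreflexivity — not merely reflexivity — is preserved under this renorming. A secondary delicate point is justifying (iii) $\Rightarrow$ (iv) without circularity: the cleanest route is to observe that $B$-convexity rules out isomorphic copies of $\ell^1$ (Giesy--James) and of $\ell^\infty$, and then apply the already-proved necessity directions in Theorems \ref{th:ellinftycopy} and \ref{th:copyl1suf2}, which only require condition {\rm (V)}.
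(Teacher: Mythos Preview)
Your proposal is correct and follows essentially the same route as the paper's proof: the paper also uses Theorem~\ref{th:ref} for (i)\,$\Leftrightarrow$\,(iv), Lemma~\ref{lem:unif} together with Theorem~\ref{th:uniconmosp} and the isomorphism $W^{1,\Phi}\cong W^{1,\Psi}$ for (iv)\,$\Rightarrow$\,(ii), and the non-$B$-convexity of $\ell^1$ and $\ell^\infty$ combined with Theorems~\ref{th:ellinftycopy} and~\ref{th:copyl1suf2} for (iii)\,$\Rightarrow$\,(iv). The only cosmetic difference is that the paper proves (iv)\,$\Rightarrow$\,(iii) directly (repeating the renorming argument) rather than passing through (ii), whereas your cycle (iv)\,$\Rightarrow$\,(ii)\,$\Rightarrow$\,(iii) is slightly more economical; the substantive ideas are identical.
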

\begin{proof}
 (ii) $\Rightarrow$ (i) is clear. 

  (i) $\Rightarrow$ (iv)    If (i) is satisfied, that is the space $W^{1,\Phi}$ is reflexive, then by Theorem \ref{th:ref} both $\Phi$ and $\Phi^*$ satisfy $\Delta_2$, so (iv) holds. 

  (iv) $\Rightarrow$ (ii)  By the assumption that $\Phi^*\in \Delta_2$, in view of Lemma \ref{lem:unif}, there exists a uniformly convex function $\Psi$ equivalent to $\Phi$. Since $\Phi\in \Delta_2$, the function $\Psi\in \Delta_2$.  Now by Theorem \ref{th:uniconmosp}, the space $W^{1,\Psi}$ is uniformly convex and thus superreflexive \cite[Problem 11.6]{AK}. Since $\Psi$ is equivalent to $\Phi$, the spaces $W^{1,\Phi}$ and $W^{1,\Psi}$ coincide as sets with equivalent norms (see Theorem \ref{th:equivMO}).  It follows that  $W^{1,\Phi}$  as isomorphic to $W^{1,\Psi}$ is  superreflexive. 

  (iv) $\Rightarrow$ (iii)  By \cite[Example 3 (ii), p. 118]{G}, any uniformly convex space is  $B$-convex.  By  $\Phi^*\in \Delta_2$, in view of Lemma \ref{lem:unif}, there exists a uniformly convex function $\Psi$ equivalent to $\Phi$. Since $\Delta_2$ is preserved by equivalence, $\Psi\in \Delta_2$.  In view of Theorem \ref{th:uniconmosp}, the space $W^{1,\Psi}$ is uniformly convex and thus is $B$-convex.
In the same paper \cite{G}, in Corollary 6 it was proved that if Banach spaces $X$ and $Y$ are isomorphic, then $X$ is $B$-convex if and only if $Y$ is $B$-convex. It follows that $W^{1,\Phi}$ is $B$-convex as $W^{1,\Psi}$ and  $W^{1,\Phi}$ coincide as sets with equivalent norms.

  (iii) $\Rightarrow$ (iv) If $\Phi\notin \Delta_2$, then $W^{1,\Phi}$ contains a subspace isomorphic to $\ell^\infty$ by Theorem \ref{th:ellinftycopy}, and since $\ell^\infty$ is not $B$-convex \cite[Example 3, (iv)]{G}, it contradicts the assumption of $B$-convexity of $W^{1,\Phi}$. 

  If $\Phi^*\notin\Delta_2$ then $W^{1,\Phi}$ contains a subspace isomorphic to $\ell^1$ by Theorem \ref{th:copyl1suf2}. However $\ell^1$ is not $B$-convex \cite[Example 3, (iv)]{G} or \cite{DJT},  and so $W^{1,\Phi}$ can not be $B$-convex.

\end{proof}

  In the next result let $\Phi(x,t) = \varphi(t)$ for all $x\in \Omega$, $t\ge 0$. Then $\varphi$ is an Orlicz function and $L^\varphi$ is an Orlicz space. For $\Omega = (\alpha, \beta)$, $-\infty < \alpha < \beta < \infty$,  and an Orlicz function $\varphi$, the condition (V) is always satisfied   by  Theorem \ref{th:12}, and  the condition $\Delta_2$ achieves a simpler form (\ref{ineq:deltatwo}). By these  remarks and Theorem \ref{th:BconvMOSob} we get the following corollary in Orlicz-Sobolev spaces.

\begin{corollary}\label{BconvOrSob}
Let $\Omega=(\alpha,\beta)$, $-\infty<\alpha<\beta<\infty$ and $\varphi$ be an Orlicz function.  Then the following conditions are equivalent.
\begin{itemize}
\item[{\rm(i)}]
$W^{1,\varphi}$ is reflexive.
\item[{\rm(ii)}]
$W^{1,\varphi}$ is superreflexive.
\item[{\rm(iii)}]
$W^{1,\varphi}$ is $B$-convex.
\item[{\rm(iv)}]
Both $\varphi$ and $\varphi^*$ satisfy condition $\Delta_2^\infty$.
\end{itemize}

\end{corollary}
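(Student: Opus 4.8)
The plan is to obtain this corollary as the specialization of Theorem~\ref{th:BconvMOSob} to a parameter-free $MO$ function. I would set $\Phi(x,t)=\varphi(t)$ for all $x\in\Omega=(\alpha,\beta)$ and $t\ge 0$, so that $L^\Phi=L^\varphi$ and $W^{1,\Phi}=W^{1,\varphi}$, and then check that this $\Phi$ satisfies condition~{\rm(V)}. This is immediate: for any $b>0$ one has $\int_\alpha^\beta\varphi(b)\,dx=(\beta-\alpha)\varphi(b)<\infty$ since $\varphi$ is finite-valued, and choosing $a>0$ with $\varphi(a)>0$ (possible because $\varphi$ is not identically $0$) gives $\essinf_{x\in\Omega}\varphi(a)=\varphi(a)>0$. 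Equivalently, this is exactly the content of Corollary~\ref{cor:voltera-orlicz}.

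Once {\rm(V)} is verified, Theorem~\ref{th:BconvMOSob} applies directly and gives the equivalence of reflexivity, superreflexivity and $B$-convexity of $W^{1,\varphi}$ with the statement that both $\varphi$ and $\varphi^*$ satisfy condition~$\Delta_2$. The only remaining task is cosmetic: to replace $\Delta_2$ by its finite-measure form $\Delta_2^\infty$. For this I would invoke the equivalence recorded just after~(\ref{ineq:deltatwo}): on the finite non-atomic interval $(\alpha,\beta)$ with Lebesgue measure, an Orlicz function $\psi$ satisfies $\Delta_2$ in the sense used here if and only if $\psi(2u)\le k\psi(u)$ holds for all $u\ge u_0$ for some $k>0$ and $u_0\ge 0$. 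Applying this to $\psi=\varphi$ rewrites the first half of condition~(iv) of Theorem~\ref{th:BconvMOSob}; applying it to $\psi=\varphi^*$ rewrites the second half, with the harmless caveat that if $b_{\varphi^*}<\infty$ then neither $\Delta_2$ nor $\Delta_2^\infty$ can hold for $\varphi^*$, so the two formulations still agree. Combining these, condition~(iv) of Theorem~\ref{th:BconvMOSob} becomes precisely condition~(iv) of the corollary, and the chain of equivalences is complete.

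I do not anticipate any real difficulty here: the corollary is simply a translation of Theorem~\ref{th:BconvMOSob} into Orlicz-function language, and the only points requiring a word of care are the (routine) verification of~{\rm(V)} for an Orlicz function on a bounded interval and the standard reduction of the global $\Delta_2$ condition to $\Delta_2^\infty$ on a finite measure space.
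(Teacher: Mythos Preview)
Your proposal is correct and follows essentially the same route as the paper: specialize Theorem~\ref{th:BconvMOSob} to $\Phi(x,t)=\varphi(t)$, observe that condition~{\rm(V)} holds automatically on a bounded interval (the paper cites Theorem~\ref{th:12}, which is what underlies Corollary~\ref{cor:voltera-orlicz}), and then use the finite-measure reduction of $\Delta_2$ to $\Delta_2^\infty$ recorded around~(\ref{ineq:deltatwo}). Your extra remark about the case $b_{\varphi^*}<\infty$ is a nice bit of care the paper does not make explicit.
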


  Since the Voltera operator is always bounded on $L^{p(\cdot)}$  by Corollary \ref{cor:voltera-lp}, the next corollary is an immediate result from Theorem \ref{th:BconvMOSob}.
 
\begin{corollary}\label{BconvSobpx}
Let $\Omega=(\alpha,\beta)$, $\infty<\alpha<\beta<\infty$, and $\Phi(x,t) = \frac{t^{p(x)}}{p(x)}$, $1\le p(x) < \infty$ a.e. in $\Omega$.  Then the following conditions are equivalent.
\begin{itemize}
\item[{\rm(i)}]
$W^{1,p(\cdot)}$ is reflexive.
\item[{\rm(ii)}]
$W^{1,p(\cdot)}$ is superreflexive.
\item[{\rm(iii)}]
$W^{1, p(\cdot)}$ is $B$-convex.
\item[{\rm(iv)}]
 $1<p^- \le p^+ < \infty$.
\end{itemize}

\end{corollary}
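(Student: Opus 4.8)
The plan is to realize this statement as the specialization of Theorem \ref{th:BconvMOSob} to the variable exponent function $\Phi(x,t)=t^{p(x)}/p(x)$, so that essentially no new work is required beyond invoking earlier results. First I would verify that $\Phi$ satisfies condition ${\rm (V)}$ on the finite interval $(\alpha,\beta)$; this is exactly what underlies Corollary \ref{cor:voltera-lp}. Indeed, $\Phi(x,1)=1/p(x)\le 1$ gives $\int_\alpha^\beta \Phi(x,1)\,dx\le\beta-\alpha<\infty$, and $\Phi^*(x,1)=1/q(x)\le 1$ together with Lemma \ref{lem:111} yields $\essinf_{x\in\Omega}\Phi(x,a)>0$ for some $a>0$; hence ${\rm (V)}$ holds.

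With ${\rm (V)}$ in hand, Theorem \ref{th:BconvMOSob} applies verbatim and furnishes the equivalence of (i), (ii) and (iii) with the condition that both $\Phi$ and $\Phi^*$ satisfy $\Delta_2$. It then remains only to rewrite this last condition in terms of the exponent. By Theorem \ref{th:deltalpx}, $\Phi$ satisfies $\Delta_2$ precisely when $p^+<\infty$, and $\Phi^*$ satisfies $\Delta_2$ precisely when $p^->1$. Since $p^-\le p^+$ always holds, the conjunction of these two conditions is exactly $1<p^-\le p^+<\infty$, i.e.\ condition (iv). Chaining these equivalences completes the argument.

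There is no real obstacle here: the whole substance sits in Theorems \ref{th:BconvMOSob} and \ref{th:deltalpx} and in the boundedness of the Voltera operator on $L^{p(\cdot)}$ from Corollary \ref{cor:voltera-lp}. The only point that genuinely needs checking is that the variable exponent function meets the hypothesis ${\rm (V)}$ of Theorem \ref{th:BconvMOSob}, and that is immediate from the trivial bounds $\Phi(x,1)\le 1$ and $\Phi^*(x,1)\le 1$ combined with Lemma \ref{lem:111}.
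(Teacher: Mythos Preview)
Your proposal is correct and follows exactly the paper's approach: the paper states that the corollary is ``an immediate result from Theorem \ref{th:BconvMOSob}'' once one knows (via Corollary \ref{cor:voltera-lp}) that the variable exponent function satisfies condition ${\rm (V)}$, with the translation of condition (iv) via Theorem \ref{th:deltalpx} left implicit. Your write-up is in fact slightly more careful than the paper's one-line justification, since you verify ${\rm (V)}$ directly rather than merely citing boundedness of the Voltera operator.
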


\end{document}